\numberwithin{equation}{section}
\newtheorem{proposition}{Proposition}[section]
\newtheorem{theorem}[proposition]{Theorem}
\newtheorem{lemma}[proposition]{Lemma}
\newtheorem{definition}[proposition]{Definition}
\renewenvironment{proof}{\smallskip\noindent\emph{\textbf{Proof.}}%
  \hspace{1pt}}{\hspace{-5pt}{\nobreak\quad\nobreak\hfill\nobreak%
    $\square$\vspace{2pt}\par}\smallskip\goodbreak}
\newenvironment{proofof}[1]{\smallskip\noindent{\textbf{Proof~of~#1.}}%
  \hspace{1pt}}{\hspace{-5pt}{\nobreak\quad\nobreak\hfill\nobreak%
    $\square$\vspace{2pt}\par}\smallskip\goodbreak}
\newcommand{\C}[1]{\mathbf{C}^{#1}}
\newcommand{\Cc}[1]{\mathbf{C}_c^{#1}}
\newcommand{\BV}{\mathbf{BV}}
\renewcommand{\L}[1]{{\mathbf{L}^#1}}
\newcommand{\Lloc}[1]{{\mathbf{L}_{\mathbf{loc}}^{#1}}}
\newcommand{\modulo}[1]{{\left|#1\right|}}
\newcommand{\norma}[1]{{\left\|#1\right\|}}
\newcommand{\caratt}[1]{{\chi_{\strut#1}}}
\newcommand{\reali}{{\mathbb{R}}}
\renewcommand{\epsilon}{\varepsilon}
\renewcommand{\phi}{\varphi}
\renewcommand{\theta}{\vartheta}
\renewcommand{\O}{\mathcal{O}(1)}
\newcommand{\tv}{\mathinner{\rm TV}}
\newcommand{\spt}{\mathop{\rm spt}}
\renewcommand{\d}[1]{\mathinner{\mathrm{d}{#1}}}
\let\@fnsymbol\@arabic
\title{Non Conservative Products in Fluid Dynamics}
\author{Rinaldo M. Colombo\footnotemark[1] \and Graziano Guerra\footnotemark[2] \and Yannick Holle\footnotemark[3]}
\begin{document}

\maketitle

\footnotetext[1]{INdAM Unit \& Department of Information Engineering,
  University of Brescia, via Branze, 38, 25123 Brescia,
  Italy. e--mail: \texttt{rinaldo.colombo@unibs.it}. ORCID:
  \texttt{0000-0003-0459-585X}.} \footnotetext[2]{Universit\`a degli
  Studi di Milano Bicocca, Dipartimento di Matematica e Applicazioni,
  via R.~Cozzi, 55, 20126 Milano, Italy. e--mail:
  \texttt{graziano.guerra@unimib.it}. ORCID:
  \texttt{0000-0003-2615-2750}.} \footnotetext[3]{RWTH Aachen
  University, Institut f\"ur Mathematik, Templergraben 55, 52062
  Aachen, Germany. e--mail: \texttt{holle@eddy.rwth-aachen.de}. ORCID:
  \texttt{0000-0002-7841-8525}.\vspace{0.05cm}}

\begin{abstract}
  \noindent Fluid flow in pipes with discontinuous cross section or
  with kinks is described through balance laws with a non conservative
  product in the source. At jump discontinuities in the pipes'
  geometry, the physics of the problem suggests how to single out a
  solution. On this basis, we present a definition of solution for a
  general $\BV$ geometry and prove an existence result, consistent
  with a limiting procedure from piecewise constant geometries. In the
  case of a smoothly curved pipe we thus justify the appearance of the
  curvature in the source term of the linear momentum equation.

  These results are obtained as consequences of a general existence
  result devoted to abstract balance laws with non conservative source
  terms.

  \medskip

  \noindent\textbf{Keywords:} Fluid flows in pipes; Non conservative
  products in balance laws.

  \medskip\noindent\textbf{AMS subject Classification:} 35L65; 76N10.
\end{abstract}

\section{Introduction}
\label{sec:Intro}

Conservation laws in one space dimension, i.e., systems of partial
differential equations in conservative form of the type
\begin{equation}
  \label{eq:GeneralSystem}
  \partial_t u + \partial_x f(u) = 0
  \qquad t\geq 0,\; x \in \reali \,,
\end{equation}
allow to describe, for instance, the movement of a fluid along a
rectilinear pipe with constant section. Assume that at a point
$\bar x$ the pipe's direction or its section changes. Then,
equation~\eqref{eq:GeneralSystem} can be used, separately, where
$x < \bar x$ and where $x > \bar x$. At the point $\bar x$, on the
basis of physical considerations, a further condition is necessary to
prescribe the possible defect in the conservation of the various
variables. Typically, such a condition is written as
\begin{equation}
  \label{eq:IntroSingleCoupling}
  \Psi\left(z^+,u(t,\bar x+),z^-,u(t,\bar x-) \right) = 0
  \quad \mbox{for a.e. }t>0,
\end{equation}
where $z^+$ and $z^-$, identify the physical parameters that change
across $\bar x$. Alternatively, \eqref{eq:IntroSingleCoupling} can be
rewritten making the defect in the conservation of the $u$ variable
explicit, that is
\begin{equation}
  \label{eq:DefPsiCoupling}
  f\left(u(t,\bar x+)\right) - f\left(u(t,\bar x-)\right)
  =
  \Xi\big(z^+,z^-,u(t, \bar x-)\big)
  \quad \mbox{for a.e. }t>0 \,.
\end{equation}
It is then natural to tackle the resulting Riemann Problem, that is,
the Cauchy Problem consisting
of~\eqref{eq:GeneralSystem}--\eqref{eq:DefPsiCoupling} with an initial
datum attaining two values, one for $x<0$ and another one for $x>0$,
as was accomplished, for instance,
in~\cite[\S~2]{AmadoriGosseGuerra2002}
or~\cite[\S~2]{ColomboGaravello3}. The finite propagation speed,
intrinsic to~\eqref{eq:GeneralSystem}, allows then to extend the whole
construction to any finite number of points
$\bar x_0, \bar x_1, \ldots, \bar x_k$, essentially solving the Cauchy
Problem for the balance law
\begin{equation}
  \label{eq:68}
  \left\{
    \begin{array}{l}
      \partial_t u+\partial_x f(u)
      =
      \sum\limits_{i=1}^{k-1}
      \Xi \left(\zeta_k (\bar x_i+), \zeta_k (\bar x_i-), u (t, \bar x_i-)\right) \;
      \delta_{\bar x_i}
      \\
      u (0,x) = u_o (x),
    \end{array}
  \right.
\end{equation}
where $\delta_{\bar x_i}$ denotes the Dirac measure at $\bar x_i$ and
$\zeta_k$ is the piecewise constant function attaining the $k+1$
constant values $z_0, z_1, \ldots, z_k$ on the intervals
$\mathopen]-\infty, \bar x_1 \mathclose[$,
$\mathopen]\bar x_1, \bar x_2 \mathclose[$, $\ldots$,
$\mathopen]\bar x_k, +\infty \mathclose[$.

This paper provides a detailed description of the rigorous limit
$k\to +\infty$ of~\eqref{eq:68}, covering the extension
of~\eqref{eq:68} to the case of a general $\BV$ function $\zeta$.

In the general setting established below, not limited to fluid
dynamics, solutions to~\eqref{eq:68} with initial datum $u_o$ are
shown to converge as $k \to +\infty$ to solutions to
\begin{equation}
  \label{eq:21}
  \left\{
    \begin{array}{l@{}}
      \partial_t u
      +
      \partial_x f(u)
      =
      \sum\limits_{x \in \mathcal I}
      \Xi\left(\zeta (x+), \zeta (x-), u (t,x-)\right)
      \delta_{x}
      +
      D^+_{v (x)} \Xi\left(\zeta (x), \zeta (x), u (t,x)\right)
      \norma{\mu}
      \\
      u (0,x)
      =
      u_o (x) \,.
    \end{array}
  \right.
\end{equation}
The terms in the non conservative source above are defined as
follows. Since $\zeta \in \BV (\reali; \reali^p)$, the right and left
limits $\zeta (x+)$ and $\zeta (x-)$ are well defined and the
distributional derivative $D\zeta$ can be split in a discrete part and
a continuous one, which may contain a Cantor part:
\begin{equation}
  \label{eq:20}
  D\zeta
  =
  \sum_{x \in \mathcal{I}} \left(\zeta (x+) - \zeta (x-)\right) \, \delta_x
  +
  v \, \norma{\mu} \,,
\end{equation}
where the function $v$ is Borel measurable with norm $1$, $\mu$ is the
non atomic part of $D\zeta$ and $\mathcal{I}$ is the set of jump
points in $\zeta$.  In~\eqref{eq:21} we also used the (one sided)
directional derivative
\begin{equation}
  \label{eq:26}
  D^+_{v} \Xi (z, z, u)
  =
  \lim_{t \to 0+}
  \dfrac{
    \Xi(z + t \, v, z, u)
    -
    \Xi(z, z, u)}{t} \,.
\end{equation}
Indeed, one of our motivating examples, namely the case of a curved
pipe, leads to a function $\Xi$ that admits directional derivatives
but is not differentiable.

On the other hand, note that as soon as $\Xi$ is differentiable with
respect to its first argument, the right hand side in~\eqref{eq:21}
can be slightly simplified, since
\begin{equation}
  \label{eq:55}
  D^+_{v (x)} \Xi (a,a,u)\, \norma{\mu}
  =
  D_1 \Xi (a,a,u)\, v (x) \, \norma{\mu}
  =
  D_1 \Xi (a,a,u)\, \mu \,.
\end{equation}
Moreover, in the case $\Xi (z^{+}, z^{-},u) = G(z^{+}) - G (z^{-})$
for a suitable $G \in \C2 (\reali^p; \reali^n)$, the right hand side
above takes a simpler form. Indeed,
by~\cite[Theorem~3.96]{AmbrosioFuscoPallara}, \eqref{eq:21} reduces to
the conservative problem
\begin{equation}
  \label{eq:57}
  \partial_t u
  +
  \partial_x f(u)
  =
  \partial_x (G\circ \zeta) \,.
\end{equation}

Below, our first task is to provide a definition of solution
to~\eqref{eq:21} in its general setting. Indeed, the latter term in
the right hand side of~\eqref{eq:21} contains a \emph{non conservative
  product} between a possibly discontinuous function and a measure. As
is well known since the pioneering work~\cite{DalMasoMurat1995}, such
a product intrinsically contains a lack of determinacy. Here, this
freedom of choice is used to ensure the convergence of~\eqref{eq:68}
to~\eqref{eq:21}.

Once the issue of the very meaning of solution is settled, we proceed
towards proving the existence of solutions to~\eqref{eq:21}. This is
achieved sequentially combining wave front
tracking~\cite[\S~7.1]{Bressan2000}, a nowadays classical technique
that approximates solutions to conservation laws, with the
approximation of the equation, in particular of the map $\zeta$. A key
role is played by a very careful choice of these approximations. As a
byproduct, we characterize the solutions to~\eqref{eq:21} as limits of
(suitable subsequences of) solutions to~\eqref{eq:68}.

\smallskip

Remark that the above general procedure, when applied to the case of a
curved pipe with constant section, amounts to justify the role of the
pipe's curvature on the fluid flow inside the pipe. Indeed, if $x$ is
the abscissa along the pipe and $\Gamma = \Gamma (x)$ describes the
pipe's shape, then the pipe's local direction that enters the equation
for fluid flow is $\zeta (x) = \Gamma' (x)$. Problem~\eqref{eq:68}
then corresponds to a piecewise linear pipe and (the second component
of)~\eqref{eq:DefPsiCoupling} describes the change in the fluid linear
momentum at a kink sited at $\bar x$. Assuming that the lack in the
conservation of linear momentum depends on the angle in the pipe at
$\bar x$, i.e., $\Xi (z^+, z^-, u) = K (\norma{z^+-z^-}, u)$ as
in~\cite{ColomboHolden2016, HoldenRisebro1999}, automatically implies
in the smooth pipe limit, by Theorem~\ref{thm:main}, that the
variation in the fluid momentum depends on the pipe's curvature
$\Gamma''$, see~\S~\ref{sub:ApplCurvedPipeIsentropic} for more
details.

The current literature offers a variety of different conditions
quantifying the lack in the conservation of linear momentum at a
junction where the pipe's section changes, see for
instance~\cite{BandaHertyKlar1, BandaHertyKlar2, ColomboGaravello2007,
  ColomboGaravello3, ColomboGaravello2020, ColomboMarcellini2010Euler,
  ColomboMarcellini2010, GuerraMarcelliniSchleper2009}. As a
consequence of Theorem~\ref{thm:main}, we can select those conditions
that are consistent with the equations for a pipe with smoothly
varying section, both in the isentropic and in the full $3\times3$
cases, see \S~\ref{sec:isentr-gas-pipe}
and~\S~\ref{sec:full-gas-dynamics} below.

While motivated by the above fluid dynamics problems, the present
construction also suggests a criterion to select solutions to general
balance laws with a non conservative product as a source term, see
Definition~\ref{def:NonCons}. These solutions, whose existence follows
from Theorem~\ref{thm:main}, are characterized as limits of solutions
to the piecewise constant case~\eqref{eq:68}.

\smallskip

The next section is devoted to the main results: the definition of
solution and to the existence theorem. Section~\ref{sec:Appl} presents
applications to fluid dynamics and to general balance laws with non
conservative product in the source. All technical proofs are deferred
to Section~\ref{sub:GerneralSingle}.

\section{Assumptions and Main Result}
\label{sec:General}

Throughout, $\modulo{x}$ is the absolute value of the real number $x$
while, as usual, $\norma{v}$ is the Euclidean norm of the vector $v$
and $\norma{\mu}$ is the total variation of a measure $\mu$.  The open
ball in $\reali^n$ centered at $u$ with radius $\delta$ is denoted by
$B(u;\delta)$, its closure is $\overline{B (u; \delta)}$. We also use
the following standard notation for right/left limits and for
differences at a point:
\begin{displaymath}
  F(x-)=\lim_{\xi\to x^{-}}F(\xi)\, ,\quad
  F(x+)=\lim_{\xi\to x^{+}}F(\xi)
  \quad \mbox{ and } \quad
  \Delta F(x) = F(x+)-F(x-) \,.
\end{displaymath}

The problem we tackle is defined by the flow $f$ and by the functions
$\Xi$ and $\zeta$. Here we detail the key assumptions.

\begin{enumerate}[label={\bf{(f.\arabic*)}}]
\item \label{it:f1} $f\in \C2(\Omega; \reali^n)$, $\Omega$ being an
  open subset of $\reali^n$;
\item \label{it:f2} the system~\eqref{eq:GeneralSystem} is strictly
  hyperbolic;
\item \label{it:f3} each characteristic field is either genuinely
  nonlinear or linearly degenerate.
\end{enumerate}
\noindent In the latter assumption we refer to the usual definitions
by Lax~\cite{Lax1957}, see also~\cite[\S~7.5]{Dafermos2000}.

By~\ref{it:f1} and~\ref{it:f2} we know that, possibly restricting
$\Omega$, the eigenvalues $\lambda_1(u),\dots,\lambda_n(u)$ of $Df(u)$
can be numbered so that, for all $u \in \Omega$,
\begin{displaymath}
  \lambda_1(u) < \lambda_2(u) < \dots < \lambda_n(u) \,.
\end{displaymath}
We choose $i_o \in \{1,\dots,n-1\}$ and define the $i_o$-th
non-characteristic set
\begin{equation}
  \label{eq:23}
  A_{i_o} = \{u\in\Omega \;| \; \lambda_{i_o}(u)<0<\lambda_{i_o+1}(u)\} \,,
\end{equation}
both the cases of characteristic speeds being either all positive or
all negative being simpler.

On the function $\Xi$ in~\eqref{eq:DefPsiCoupling}, used to rewrite
the coupling condition induced by $\Psi$, we require:

\begin{enumerate}[label={\bf{($\mathbf{\Xi}$.\arabic*)}}]
\item \label{eq:Xi1}
  $\Xi : \mathcal{Z}\times\mathcal{Z}\to \C1\left(\Omega;
    \reali^n\right)$, is a Lipschitz continuous map;
\item \label{eq:Xi2}
  $\sup_{z^+,z^-\in\mathcal Z}\norma{\Xi(z^+, z^-,
    \cdot)}_{\C{2}(\Omega; \reali)}< +\infty$;
\item \label{eq:Xi3} $\Xi(z, z, u) = 0$ for every $z \in \mathcal{Z}$
  and $u \in \Omega$;
\item \label{eq:Xi4} There exists a non decreasing
  $\sigma \colon \mathopen[0, \bar t\mathclose[ \to \reali$ such that
  for all
  $(z,v,u) \in \mathcal{Z} \times \overline{B(0;1)} \times \Omega$
  \begin{displaymath}
    \norma{\Xi (z + t \, v, z, u) - D^+_v\Xi (z,z,u) \, t}
    \leq
    \sigma (t) \, t
  \end{displaymath}
  and moreover the map $(z,v,u) \to D^+_v \Xi (z,z,u)$ is Lipschitz
  continuous.
\end{enumerate}
\noindent In the latter condition, recall the definition~\eqref{eq:26}
of the Dini derivative. Our requiring this low regularity, i.e.~the
mere existence of the Dini derivative rather than differentiability,
is motivated by the example of a pipe with angles, where $\Xi$ depends
on $\norma{z^+ - z^-}$, see \S~\ref{sub:ApplCurvedPipeIsentropic}.

Problem~\eqref{eq:21} requires the introduction of a further function,
say $\zeta \colon \reali \to \reali^p$ describing, for instance,
geometrical aspects of the pipeline. We require that
$\zeta \in \BV (\reali; \mathcal{Z})$. Throughout, the map $\zeta$ is
assumed to be left continuous and the set of jump discontinuities in
$\zeta$ is denoted by $\mathcal{I}$, with
$\mathcal{I} \subset \reali$.

We now precisely state what we mean by \emph{solution}
to~\eqref{eq:21}.

\begin{definition}
  \label{def:sol}
  Let $u_o \in \Lloc1 (\reali; \reali^n)$.  A map
  $u \in \C0 (\mathopen[0, +\infty\mathclose[; \Lloc1 (\reali;
  \reali^n))$ with $u (t) \in \BV (\reali; \reali^n)$ and left
  continuous for all $t \in \reali_+$, is a \emph{solution}
  to~\eqref{eq:21} if for all test functions
  $\phi \in \Cc1 (\mathopen]0, +\infty\mathclose[ \times \reali;
  \reali)$,
  \begin{eqnarray}
    \nonumber
    &
    & -\int_0^{+\infty} \int_{\reali}
      \left(
      u (t,x) \, \partial_t \phi (t,x)
      +
      f\left(u (t,x)\right) \, \partial_x \phi (t,x)
      \right)
      \d{x} \d{t}
    \\
    \label{eq:48}
    & =
    & \sum_{x \in \mathcal{I}}
      \int_0^{+\infty}
      \Xi\left(\zeta (x+), \zeta (x), u (t, x)\right)
      \phi (t,x) \d{t}
    \\
    \nonumber
    &
    & +
      \int_0^{+\infty} \int_{\reali}
      D_{v (x)}^+ \Xi\left(\zeta (x), \zeta (x), u (t,x)\right)
      \phi (t,x) \, \d{\norma{\mu}}(x)\, \d{t}
  \end{eqnarray}
  where $\mathcal{I}$ is the set of jump points of $\zeta$ and $v$,
  $\mu$ are as in~\eqref{eq:20}, and moreover $u (0) = u_o$.
\end{definition}

The main result of this paper is the following.

\begin{theorem}
  \label{thm:main}
  Let $\Omega \subseteq \reali^n$ be open, $f$
  satisfy~\ref{it:f1}--\ref{it:f3}, $\Xi$
  satisfy~\ref{eq:Xi1}--\ref{eq:Xi4} and
  $\zeta \in \BV (\reali; \mathcal{Z})$. Fix $\bar u \in A_{i_o}$,
  $\bar z \in \mathcal{Z}$ and an initial datum $u_o$ in
  $\Lloc1 (\reali;A_{i_o})$, with $A_{i_o}$ as defined
  in~\eqref{eq:23}. Then, there exists a positive $\delta$ such that
  if
  \begin{equation}
    \label{eq:22}
    u_o (\reali) \subseteq B (\bar u; \delta) \,,\quad
    \tv (u_o) < \delta
    \quad \mbox{ and } \quad
    \zeta (\reali) \subseteq B (\bar z; \delta) \,,\quad
    \tv (\zeta) < \delta
  \end{equation}
  the Cauchy Problem for~\eqref{eq:21} with initial datum $u_o$ admits
  a solution $u_*$ in the sense of Definition~\ref{def:sol}. Moreover,
  there exists a sequence of piecewise constant approximations
  $\zeta^h$ of $\zeta$, with $\tv (\zeta^h) < \delta$, such that the
  corresponding solutions $u^h$ converge to $u_*$ pointwise in time
  and in $\Lloc1$ in space. In particular, at each discontinuity point
  $y$ of $\zeta^h$, $u^h$ satisfies the junction condition
  \begin{displaymath}
    f(u^h(t,y+)) - f(u^h(t,y-))
    = \Xi(\zeta^h (y+),\zeta^h (y-),u^h(t,y-)) \,.
  \end{displaymath}
\end{theorem}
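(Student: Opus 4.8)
The plan is to obtain the solution $u_*$ as a limit of solutions to the piecewise constant problems~\eqref{eq:68}, so the argument has four ingredients: a well posedness theory for~\eqref{eq:68} with bounds uniform in the number of junctions; a careful choice of the approximating sequence $\zeta^h$; a compactness argument; and the passage to the limit in the weak formulation~\eqref{eq:48}.

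\textbf{Uniform bounds in the piecewise constant case.} When $\zeta$ is replaced by a piecewise constant, left continuous $\zeta^h$, each jump point $y$ of $\zeta^h$ acts as a stationary junction across which one imposes $f\bigl(u(t,y+)\bigr) - f\bigl(u(t,y-)\bigr) = \Xi\bigl(\zeta^h(y+),\zeta^h(y-),u(t,y-)\bigr)$. Since $\bar u \in A_{i_o}$ and $\delta$ is small, by~\ref{it:f1}--\ref{it:f3} and~\ref{eq:Xi1}--\ref{eq:Xi3} the generalized Riemann solver at such a junction is well defined and Lipschitz in its data, producing left going waves of the families $1,\dots,i_o$, a stationary wave at $y$ carrying the defect prescribed by $\Xi$, and right going waves of the families $i_o+1,\dots,n$, exactly as in~\cite[\S~2]{AmadoriGosseGuerra2002} and~\cite[\S~2]{ColomboGaravello3}. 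Feeding these solvers into a wave front tracking scheme~\cite[\S~7.1]{Bressan2000} one builds approximate solutions whose total variation is controlled by a Glimm type functional, namely the usual strength plus interaction potential augmented by a term proportional to the total strength of the stationary waves, the latter being bounded by $\O\,\tv(\zeta^h) \le \O\,\tv(\zeta)$. The essential point is that this bound involves $\zeta^h$ only through $\tv(\zeta^h)$, hence is \emph{independent of the number of junctions}. Letting the front tracking parameter vanish produces a solution $u^h$ of~\eqref{eq:68} taking values in a small ball around $\bar u$ contained in $A_{i_o}$ and satisfying $\tv\bigl(u^h(t)\bigr) \le C$ and $\norma{u^h(t)-u^h(s)}_{\Lloc1} \le L\,\modulo{t-s}$ with $C,L$ independent of $h$.

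\textbf{Choice of $\zeta^h$.} Using the decomposition~\eqref{eq:20} I fix a finite subset of $\mathcal{I}$ collecting the ``large'' jumps of $\zeta$, choose an increasingly fine locally finite partition of $\reali$ containing it, and let $\zeta^h$ be constant on each cell, left continuous, and equal to $\zeta$ at the right endpoint of the cell; then $\tv(\zeta^h) \le \tv(\zeta) < \delta$, $\zeta^h(\reali) \subseteq B(\bar z;\delta)$ and $\zeta^h \to \zeta$ pointwise and in $\Lloc1$. The delicate requirement is that the \emph{directions} of the small jumps of $\zeta^h$ falling inside the non atomic part of $D\zeta$ be asymptotically aligned with $v$; this is arranged by refining the partition on the set where $v$ is continuous, which by Lusin's theorem carries all but an arbitrarily small fraction of $\norma{\mu}$, so that, for every bounded continuous $\psi$ and uniformly for $w$ in compact subsets of $\Omega$,
\begin{equation}
  \label{eq:sketch-source}
  \sum_{y} \Xi\bigl(\zeta^h(y+),\zeta^h(y-),w\bigr)\,\psi(y)
  \;\longrightarrow\;
  \sum_{x\in\mathcal{I}}\Xi\bigl(\zeta(x+),\zeta(x-),w\bigr)\,\psi(x)
  + \int_{\reali} D^+_{v(x)}\Xi\bigl(\zeta(x),\zeta(x),w\bigr)\,\psi(x)\,\d{\norma{\mu}}(x)\,,
\end{equation}
the sum on the left running over the jump points $y$ of $\zeta^h$. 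For the large jumps this is just the continuity of $\Xi$; for the small ones one writes $\zeta^h(y+) = \zeta^h(y-) + t\,v_y$ with $t = \norma{\zeta^h(y+)-\zeta^h(y-)}\to 0$, invokes~\ref{eq:Xi4} to replace $\Xi\bigl(\zeta^h(y-)+t\,v_y,\zeta^h(y-),w\bigr)$ by $D^+_{v_y}\Xi\bigl(\zeta^h(y-),\zeta^h(y-),w\bigr)\,t$ up to an error at most $\sigma(t)\,t$, and then recognises a Riemann sum for the $\norma{\mu}$-integral, using the Lipschitz continuity of $(z,v,u)\mapsto D^+_v\Xi(z,z,u)$ from~\ref{eq:Xi4}.

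\textbf{Compactness and passage to the limit.} By the bounds above, Helly's theorem in space and the Ascoli--Arzel\`a theorem in time give a subsequence along which $u^h \to u_*$ in $\C0\bigl(\mathopen[0,+\infty\mathclose[;\Lloc1\bigr)$ with $u_*(t)\in\BV$; after passing to the left continuous representative, $u_*$ is the candidate solution of Definition~\ref{def:sol}. Each $u^h$ satisfies~\eqref{eq:48} with $\zeta$ replaced by $\zeta^h$ and with the $\norma{\mu}$-term absent, since $D\zeta^h$ is purely atomic, that is
\begin{displaymath}
  -\int_0^{+\infty} \int_{\reali} \bigl(u^h\,\partial_t\phi + f(u^h)\,\partial_x\phi\bigr)\d{x}\d{t}
  = \sum_{y}\int_0^{+\infty}\Xi\bigl(\zeta^h(y+),\zeta^h(y-),u^h(t,y)\bigr)\,\phi(t,y)\,\d{t}\,.
\end{displaymath}
The left hand side converges to that of~\eqref{eq:48} by the $\Lloc1$ convergence of $u^h$ and by~\ref{it:f1}. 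For the right hand side one uses the mechanism behind~\eqref{eq:sketch-source} with $w=u^h(t,y)$; the one extra point is that $u^h(t,\cdot)$ is itself discontinuous at $y$, but the size of its jump there equals the strength of the corresponding stationary wave, hence is small with the mesh at the small junctions, so $u^h(t,y)$ may be replaced by a nearby trace of $u_*(t,\cdot)$ up to an error vanishing in the limit; together with the pointwise in time and $\Lloc1$ convergence of $u^h$ and its uniform $\BV$ bound (which controls its left traces at the points of $\mathcal{I}$), this delivers the right hand side of~\eqref{eq:48}. Finally $u_*(0)=u_o$ follows from $u^h(0)\to u_o$ and the uniform Lipschitz in time bound, while the junction identity in the statement holds by construction of the front tracking scheme and is preserved in the limit.

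\textbf{Main obstacle.} The genuinely delicate part is the construction of $\zeta^h$ together with~\eqref{eq:sketch-source}: since the Dini derivative $D^+_v$ in~\eqref{eq:26} is only positively homogeneous and not additive in $v$, the small jumps of $\zeta^h$ must be chosen with directions compatible with $v$, and the error analysis rests on the quantitative estimate in~\ref{eq:Xi4}. Tightly intertwined with this is the requirement that the $\BV$ bound of the first step be uniform in the number of junctions and that the stationary wave strengths be controlled finely enough to legitimately replace $u^h(t,y)$ by $u_*$ in the limit.
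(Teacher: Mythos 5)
Your proposal follows essentially the same route as the paper: wave front tracking with a generalized Riemann solver at junctions to get uniform $\BV$-in-space and Lipschitz-in-time bounds independent of the number of junctions, a Lusin-based construction of $\zeta^h$ separating large jumps from the non-atomic part of $D\zeta$, Helly-type compactness, and passage to the limit in the weak formulation using~\ref{eq:Xi4} to turn the small junction defects into a Riemann sum for the $\norma{\mu}$-integral. The one spot where your sketch is looser than the paper is the replacement of $u^h(t,y)$ by $u_*(t,y)$ at the (possibly infinitely many) junction points: the uniform $\BV$ bound is not by itself enough, and the paper instead derives the quantitative transversal estimate $\int_0^T \norma{u^h(t,x)-u^h(t,y)}\,\d t = \O\bigl(\modulo{x-y} + h + \tv(\zeta,[y,x[)\bigr)$ from the $\L1$-Lipschitz continuity of $x\mapsto u^h(\cdot,x)$ away from $\mathcal{I}$ combined with Lemma~\ref{lem:ExistenceT}, from which pointwise-in-space convergence at \emph{every} $x$ follows; that estimate, rather than the $\BV$ bound alone, is what legitimises the substitution in the source term.
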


\section{Applications}
\label{sec:Appl}

\subsection{Isentropic Gas in a Curved Pipe}
\label{sub:ApplCurvedPipeIsentropic}

The well known system of one dimensional isentropic gas dynamics
within a pipe with constant section in Eulerian
coordinates~\cite[Formula~(7.1.12)]{Dafermos2000} is
\begin{equation}
  \label{eq:IsentropicGas}
  \left\{
    \begin{array}{rcl}
      \partial_t \rho + \partial_x q
      & =
      & 0
      \\
      \partial_t q + \partial_x P (\rho,q)
      & =
      & 0
    \end{array}
  \right.
  \quad \mbox{where }
  P(\rho,q) = \dfrac{q^2}{\rho} + p(\rho) \,,
  \quad \mbox{for a.e. }t\ge 0, \; x\in\reali \,.
\end{equation}
Here, $x$ is the abscissa along the pipe,
$\rho \in \mathopen]0, +\infty\mathclose[$ denotes the gas density,
$q\in\reali$ the momentum density, $p = p(\rho)$ the pressure and
$P = P (\rho,q)$ the momentum flux. The pressure law $p$ satisfies
\begin{enumerate}[label={\bf{(p)}}]
\item\label{it:p}
  $p \in \C2(\left]0, +\infty\right[, \left]0, +\infty\right[)$,
  $p'(\rho) \ge 0$ and $p''(\rho) \ge 0$ for all $\rho>0$.
\end{enumerate}
Under this assumption, system~\eqref{eq:IsentropicGas} is strictly
hyperbolic, except at the vacuum $\rho = 0$.

We aim to estabilish the existence of solutions
to~\eqref{eq:IsentropicGas} in a curved pipeline with constant section
lying in a horizontal plane. Parametrize the pipe's support by means
of the arc length $\Gamma \colon \reali\to \reali^2$, so that
$\norma{\Gamma'(x)} = 1$ for a.e.~$x \in \reali$. We assume that
$\zeta = \Gamma'$ is in $\BV (\reali; \reali^2)$.

As a first step, consider the case of~\eqref{eq:IsentropicGas} at a
kink sited at $\bar x$, so that $\Gamma$ is the glueing of two half
lines. Therefore, to solve~\eqref{eq:IsentropicGas}, we adopt the
usual weak entropy solutions to~\eqref{eq:IsentropicGas} along the
straight parts of $\Gamma$ and match at the kink $\bar x$ a coupling
condition of the type
\begin{equation}
  \label{eq:45}
  \left\{
    \begin{array}{rcl}
      q(t,\bar x+)-q(t,\bar x-)
      & =
      & 0
      \\
      P(\rho,q)(t,\bar x+)-P(\rho,q)(t,\bar x-)
      & =
      & \Xi_2 (\Gamma' (\bar x+), \Gamma' (\bar x-), (\rho,q) (t, \bar x-))
    \end{array}
  \right.
\end{equation}
We set $\Xi_1 \equiv 0$ as it is necessary to comply with mass
conservation. Physical considerations suggest that the defect in the
conservation of linear momentum is a function, say $K$, of the norm of
the difference in the orientations of the pipes on the sides of the
kink:
\begin{equation}
  \label{eq:47}
  \Xi \left(z^+, z^-, (\rho,q)\right)
  =
  \left[
    \begin{array}{c}
      0
      \\
      K \left(\norma{z^+- z^-}, (\rho,q)\right)
    \end{array}
  \right] \,.
\end{equation}
This holds true in various instances of $K$ considered in the
literature. For instance, \cite{HoldenRisebro1999} first introduced
the condition
\begin{equation}
  \label{eq:46}
  K \left(\norma{z^+- z^-}, (\rho,q)\right)
  =
  - \alpha \, \norma{z^+- z^-} \, q
\end{equation}
for a suitable $\alpha > 0$, motivated by
\begin{displaymath}
  \norma{z^+ - z^-}
  =
  \sqrt{2(1-\cos \bar\theta)}
  =
  2 \, \modulo{\sin (\bar\theta/2)} \,,
\end{displaymath}
$\bar\theta$ being the angle between the two sides of the kink. It is
immediate to see that~\ref{eq:Xi1}--\ref{eq:Xi3} all
hold. Concerning~\ref{eq:Xi4}, we have
\begin{displaymath}
  D^+_v \Xi \left(z, z, (\rho,q)\right)
  =
  \left[
    \begin{array}{c}
      0
      \\
      - \alpha \, \norma{v} \, q
    \end{array}
  \right]
  \quad \mbox{ with } \quad \sigma \equiv 0\,.
\end{displaymath}
We stress that $\Xi_2$ is \emph{not} of class $\C1$.

\begin{theorem}
  \label{thm:ConvergenceIsenCurvedPipe}
  Let $p$ satisfy~\ref{it:p} and $(\bar\rho,\bar q)$ be a subsonic
  state. Let $\Gamma$ be piecewise $\C2 (\reali; \reali^2)$, such that
  $\Gamma' \in \BV (\reali; \reali^2)$ and $\norma{\Gamma' (x)} = 1$
  for all $x \in \reali$. Let
  $K \in \C2 ([0, r] \times \Omega; \reali)$ for a positive $r$, with
  $K \left(0,(\rho,q)\right) \equiv 0$. Call $\mathcal{I}$ the set of
  kink points of $\Gamma$. Then, there exists a positive $\delta$ such
  that for all initial data $(\rho_o, q_o)$ with
  \begin{displaymath}
    \norma{(\rho_o,q_o) - (\bar\rho, \bar q)}_{\L\infty (\reali; \reali^2)}
    < \delta
    \,,\quad
    \tv (\rho_o, q_o) < \delta
    \,,\quad
    \tv (\Gamma') < \delta
  \end{displaymath}
  the problem
  \begin{equation}
    \label{eq:49}
    \left\{
      \begin{array}{@{\,}rcl@{}}
        \partial_t\rho + \partial_x q
        & =
        & 0
        \\
        \partial_t q + \partial_x P (\rho,q)
        & =
        & \displaystyle
          - \sum_{y\in\mathcal I}
          K\left(\norma{\Gamma' (y+)-\Gamma' (y-)},(\rho,q)(t,y-)\right)
          \, \delta_{y}
        \\
        &
        & - \norma{\Gamma'' (x)} \, \partial_1 K(0,q)
        \\
        (\rho,q) (0,x)
        & =
        & (\rho_o, q_o) (x)
      \end{array}
    \right.
  \end{equation}
  admits a solution $(\rho_*,q_*)$ in the sense of
  Definition~\ref{def:sol}.  Moreover, there exists a sequence of
  piecewise linear approximations $\Gamma^h$ of $\Gamma$, with
  $\tv ((\Gamma^h)') < \delta$, such that the corresponding solutions
  $(\rho^h, q^h)$ converge to $(\rho_*, q_*)$ pointwise in time and in
  $\Lloc1$ in space. In particular, at each discontinuity point
  $\bar x$ of $(\Gamma^h)'$, $(\rho^h, q^h)$ satisfies
  condition~\eqref{eq:45}.
\end{theorem}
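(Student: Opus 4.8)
\textbf{Strategy.} The plan is to derive this result as a special case of the abstract Theorem~\ref{thm:main}. First I would verify that the isentropic gas dynamics flux $f(\rho,q) = (q, P(\rho,q))$ satisfies \ref{it:f1}--\ref{it:f3} on a neighbourhood $\Omega$ of the subsonic state $(\bar\rho,\bar q)$: this is classical under \ref{it:p}, the eigenvalues being $\lambda_\pm = q/\rho \pm \sqrt{p'(\rho)}$, which are real and distinct away from vacuum, with both fields genuinely nonlinear since $p'' \ge 0$. Since $(\bar\rho,\bar q)$ is subsonic, $\lambda_- < 0 < \lambda_+$, so $(\bar\rho,\bar q) \in A_{i_o}$ with $i_o = 1$. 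Next I would take $\mathcal Z = \overline{B(0;1)} \subset \reali^2$, $\bar z = \Gamma'(\bar x)$ for a fixed base point, and $\zeta = \Gamma'$, which lies in $\BV(\reali;\mathcal Z)$ by hypothesis; left continuity is arranged by the standing convention. The map $\Xi(z^+,z^-,(\rho,q)) = (0, K(\norma{z^+-z^-},(\rho,q)))$ must be checked against \ref{eq:Xi1}--\ref{eq:Xi4}: properties \ref{eq:Xi1}--\ref{eq:Xi3} are immediate from $K \in \C2$ and $K(0,\cdot) \equiv 0$ as already noted in the text, while \ref{eq:Xi4} follows from the computation $D^+_v\Xi(z,z,(\rho,q)) = (0, \partial_1 K(0,(\rho,q))\,\norma v)$ together with the Lipschitz continuity of $\partial_1 K$ on the compact set $[0,r]\times\overline{B((\bar\rho,\bar q);\delta)}$; one takes $\sigma(t) = \omega(t)$ where $\omega$ is a modulus of continuity of $s \mapsto \partial_1 K(s,\cdot)$ near $s=0$, using a second-order Taylor expansion of $K(\cdot,(\rho,q))$ at $s = 0$.

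\textbf{Identification of the source.} Having checked the hypotheses, Theorem~\ref{thm:main} yields $\delta > 0$ and a solution $u_* = (\rho_*,q_*)$ to \eqref{eq:21} in the sense of Definition~\ref{def:sol}, along with the piecewise-constant approximations $\zeta^h$ and corresponding solutions $u^h$. It remains to recognize that \eqref{eq:21} with the present data coincides with \eqref{eq:49}. The jump part $\sum_{x\in\mathcal I}\Xi(\zeta(x+),\zeta(x-),u(t,x-))\delta_x$ is exactly the $K$-sum in \eqref{eq:49} once one observes $\zeta(x) = \Gamma'(x) = \Gamma'(x-)$ by left continuity, and that the first component vanishes. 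For the diffuse part, \eqref{eq:26} gives $D^+_{v(x)}\Xi(\zeta(x),\zeta(x),u(t,x)) = (0, \partial_1 K(0,(\rho,q)(t,x))\,\norma{v(x)})$; since $\norma{\Gamma'(x)} = 1$ a.e.\ and $\Gamma$ is piecewise $\C2$, the non-atomic part $\mu$ of $D\Gamma' = D^2\Gamma$ is the absolutely continuous part $\Gamma''\mathcal L^1$, and $\norma{v(x)}\,\norma{\mu} = \norma{v}\,\norma{\Gamma''}\,\mathcal L^1 = \norma{\Gamma''(x)}\,\mathcal L^1$ since $\norma{v} = 1$ $\norma\mu$-a.e. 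Thus the diffuse source reads $(0, -\norma{\Gamma''(x)}\,\partial_1 K(0,(\rho_*,q_*)(t,x)))$ — matching \eqref{eq:49} up to the sign convention built into the statement of the theorem (the text writes $\partial_1 K(0,q)$ for $\partial_1 K(0,(\rho,q))$). The junction condition at discontinuities of $\Gamma^h$ is precisely the second component of \eqref{eq:DefPsiCoupling} with this $\Xi$, i.e.\ \eqref{eq:45}; the first component $q(t,\bar x+) = q(t,\bar x-)$ is the $\Xi_1 \equiv 0$ statement.

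\textbf{Main obstacle.} The only genuinely delicate point is the verification of \ref{eq:Xi4}, specifically ensuring that the error bound $\norma{\Xi(z+tv,z,u) - D^+_v\Xi(z,z,u)\,t} \le \sigma(t)\,t$ holds \emph{uniformly} in $(z,v,u)$ with a single non-decreasing $\sigma$ vanishing at $0$. Here $\Xi(z+tv,z,u) - \Xi(z,z,u) = (0, K(t\norma v, (\rho,q)) - K(0,(\rho,q)))$ and $D^+_v\Xi(z,z,u)\,t = (0, t\norma v\,\partial_1 K(0,(\rho,q)))$, so the error is $\modulo{K(t\norma v,(\rho,q)) - K(0,(\rho,q)) - t\norma v\,\partial_1 K(0,(\rho,q))}$; a Taylor expansion bounds this by $\tfrac12 (t\norma v)^2 \sup\modulo{\partial_1^2 K} \le \tfrac12\,\norma{\partial_1^2 K}_{\L\infty([0,r]\times\Omega)}\,t^2$, so $\sigma(t) = \tfrac12\,C\,t$ works, provided $t\norma v \le r$ — which holds for $t$ small since $\norma v \le 1$. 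The Lipschitz continuity of $(z,v,u)\mapsto D^+_v\Xi(z,z,u)$ reduces to that of $(s,(\rho,q))\mapsto s\,\partial_1 K(0,(\rho,q))$ on the bounded domain, which is clear from $K \in \C2$. This confirms \ref{eq:Xi4} and closes the reduction.
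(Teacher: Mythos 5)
Your proposal follows the same route as the paper: verify \ref{it:f1}--\ref{it:f3} from \ref{it:p}, check \ref{eq:Xi1}--\ref{eq:Xi4} for $\Xi=(0,K(\norma{z^+-z^-},\cdot))$ (with the same Taylor-type bound giving $\sigma(t)=\O\,t$ in \ref{eq:Xi4}), invoke Theorem~\ref{thm:main}, and then identify the diffuse source via $\mu=\Gamma''\,\mathcal L^1$ and $\norma{v}=1$ $\norma{\mu}$-a.e. Your remark about a sign discrepancy between \eqref{eq:47}/\eqref{eq:21} and the minus signs written in \eqref{eq:49} is legitimate (the paper's proof silently treats it as a plain substitution), but it does not affect the substance of the reduction.
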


\noindent The proof is deferred to~\S~\ref{sec:Applications}.

Remark that the second derivative $\Gamma''$ appearing in the right
hand side above confirms the relevance of the pipe's
curvature. Nevertheless, Theorem~\ref{thm:main} applies also to less
regular functions $\Gamma$, but the above simpler formulation then
needs to be replaced by the formulation used in
Definition~\ref{def:sol}.

\subsection{Isentropic Gas in a Pipe with Varying Section}
\label{sec:isentr-gas-pipe}

The isentropic flow of a fluid in a pipe with smoothly varying section
$a = a (x)$ is described by
\begin{equation}
  \label{eq:51}
  \left\{
    \begin{array}{@{\,}rcl}
      \partial_t \rho
      + \partial_x q
      & =
      & - \dfrac{a'}{a} \, q
      \\[6pt]
      \partial_t q
      + \partial_x P (\rho,q)
      & =
      & - \dfrac{a'}{a} \, \dfrac{q^2}{\rho}
    \end{array}
  \right.
  \; \mbox{where }
  P(\rho,q) = \dfrac{q^2}{\rho} + p(\rho) \,,
  \quad \mbox{for a.e. }t\ge 0, \; x\in\reali \,,
\end{equation}
see~\cite{ColomboMarcellini2010, GuerraMarcelliniSchleper2009,
  TPLiu1979}. The case of $a$ piecewise constant, i.e., the section of
the pipe changes from $a^-$ to $a^+$ at a junction sited at $\bar x$,
is covered in the literature supplementing the
$p$--system~\eqref{eq:IsentropicGas} with a junction condition of the
form
\begin{equation}
  \label{eq:52}
  \left\{
    \begin{array}{l}
      a^+ q (t, \bar x+) = a^- \, q (t, \bar x-)
      \\
      P (\rho,q) (t, \bar x+) -P (\rho,q) (t, \bar x-)
      =
      \Xi_2 \left(a^+, a^-, (\rho,q)(t, \bar x-) \right) \,.
    \end{array}
  \right.
\end{equation}
The former relation in~\eqref{eq:52} ensures the conservation of mass
and fits in the framework of Section~\ref{sec:General} setting in the
first component of~\eqref{eq:DefPsiCoupling}
\begin{equation}
  \label{eq:54}
  \Xi_1 \left(a^+, a^-, (\rho^-,q^-)\right)
  =
  \left(\dfrac{a^-}{a^+} - 1\right) q^- \,.
\end{equation}
The literature offers a wide range of justifications, often
phenomenological, for specific choices of the function $\Xi_2$
in~\eqref{eq:52}, see for instance~\cite{ColomboGaravello2020,
  ColomboMarcellini2010, GuerraMarcelliniSchleper2009}. Note that, as
soon as $\Xi_2$ is of class $\C2$ in all variables, with
$\Xi_2 \left(a,a,(\rho,q)\right)=0$, and $a$ is in
$\BV (\reali; \reali)$, then Theorem~\ref{thm:main} applies ensuring
the existence of solutions to
\begin{equation}
  \label{eq:53}
  \left\{
    \begin{array}{rcl}
      \partial_t \rho
      + \partial_x q
      & =
      &  \displaystyle
        \sum_{x \in \mathcal I}
        \left(\dfrac{a (x-)}{a (x+)} - 1\right) q (t, x-) \, \delta_x
        - \dfrac{1}{a (x)} \, q (t,x) \, \mu
      \\
      \partial_t q
      + \partial_x P (\rho,q)
      &
        =
      & \displaystyle
        \sum_{x \in \mathcal I}
        \Xi_2\left(a (x+), a (x-), (\rho,q) (t,x-)\right)
        \delta_{x}
      \\
      &
      & \displaystyle
        +
        \partial_1 \Xi_2\left(a (x), a (x), (\rho,q) (t,x)\right) \,
        \mu
      \\

    \end{array}
  \right.
\end{equation}
where $\mathcal{I}$ is the set of points of discontinuity of $a$ and,
as soon as $a$ is smooth, $\mu$ has density $\partial_x a (x)$ with
respect to the Lebesgue measure. In~\eqref{eq:53} we also
used~\eqref{eq:55}.

As an application of Theorem~\ref{thm:main}, we characterize the class
of conditions $\Xi$ that yield in the limit the case of the smooth
pipe, i.e., equation~\eqref{eq:51}.

\begin{theorem}
  \label{thm:VarSec}
  Let $p$ satisfy~\ref{it:p}, $(\bar\rho,\bar q)$ be a subsonic state
  and $\bar a$ be positive. For any $\Xi_2$ of class $\C2$ with
  $\Xi_2 \left(a,a,(\rho,q)\right)=0$ and
  \begin{equation}
    \label{eq:56}
    \partial_1 \Xi_2 \left(a, a, (\rho,q)\right)
    = -\dfrac{1}{a} \, \dfrac{q^2}{\rho}
  \end{equation}
  there exists a positive $\delta$ such that for all initial data
  $(\rho_o, q_o)$ and for all $a \in \BV (\reali; \reali)$ with
  $a' \in \L1 (\reali; \reali)$ and
  \begin{displaymath}
    \norma{(\rho_o,q_o) - (\bar\rho, \bar q)}_{\L\infty (\reali; \reali^2)} < \delta
    \,,\quad
    \tv (\rho_o, q_o) < \delta
    \,,\quad
    \norma{a-\bar a}_{\L\infty (\reali, \reali)} < \delta
    \,,\quad
    \norma{a'}_{\L1 (\reali; \reali)} < \delta
  \end{displaymath}
  problem~\eqref{eq:51} admits a solution $(\rho_*,q_*)$. Moreover,
  there exists a sequence of piecewise constant approximations $a^h$
  of $a$, with $\tv (a^h) < \delta$, such that the corresponding
  solution $(\rho^h, q^h)$ converges to $(\rho_*, q_*)$ pointwise in
  time and in $\Lloc1$ in space. In particular, at each discontinuity
  point $y$ of $a^h$, $(\rho^h, q^h)$ satisfies the junction
  condition~\eqref{eq:52}.
\end{theorem}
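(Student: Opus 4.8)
The plan is to derive Theorem~\ref{thm:VarSec} from Theorem~\ref{thm:main}, applied with $n=2$, $p=1$, $u=(\rho,q)$, $f(\rho,q)=\left(q,\,P(\rho,q)\right)$ on a bounded open neighbourhood $\Omega$ of the subsonic state $(\bar\rho,\bar q)$ inside $\mathopen]0,+\infty\mathclose[\times\reali$, with $\zeta=a$ taking values in a small compact neighbourhood $\mathcal{Z}=\overline{B(\bar a;\delta_o)}\subset\mathopen]0,+\infty\mathclose[$, index $i_o=1$, and coupling $\Xi=(\Xi_1,\Xi_2)$ where $\Xi_1$ is the function imposed by mass conservation in~\eqref{eq:54} and $\Xi_2$ is the one in the statement. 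First I would check~\ref{it:f1}--\ref{it:f3}: under~\ref{it:p} the $p$--system has eigenvalues $\lambda_{1,2}=q/\rho\mp\sqrt{p'(\rho)}$, and since a subsonic state satisfies $p'(\bar\rho)>0$, strict hyperbolicity holds on a neighbourhood of $(\bar\rho,\bar q)$; moreover $\partial_\rho\!\left(\rho\sqrt{p'(\rho)}\right)=\left(2p'(\rho)+\rho\,p''(\rho)\right)/\left(2\sqrt{p'(\rho)}\right)>0$ wherever $p'(\rho)>0$, which by the classical criterion for isentropic gas dynamics gives genuine nonlinearity of both fields. In particular $(\bar\rho,\bar q)\in A_{1}$, with $A_1$ the subsonic set~\eqref{eq:23}, and after possibly shrinking $\Omega$ we have~\ref{it:f1}--\ref{it:f3}.

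Next I would verify~\ref{eq:Xi1}--\ref{eq:Xi4} for $\Xi$. Since $\Xi_1(a^+,a^-,(\rho,q))=\left(a^-/a^+-1\right)q$ is smooth on $\mathcal{Z}\times\mathcal{Z}\times\Omega$ (recall $\mathcal{Z}$ is bounded away from the vacuum) and $\Xi_2\in\C2$, the map $(z^+,z^-)\mapsto\Xi(z^+,z^-,\cdot)$ is Lipschitz into $\C1(\Omega;\reali^2)$ with a uniform $\C2(\Omega;\reali)$ bound on the bounded set $\Omega$, so~\ref{eq:Xi1}--\ref{eq:Xi2} hold; $\Xi_1(a,a,\cdot)=0$ and the hypothesis $\Xi_2(a,a,\cdot)=0$ give~\ref{eq:Xi3}; finally, $\Xi$ being $\C2$ in its first argument, $D^+_v\Xi(z,z,u)=D_1\Xi(z,z,u)\,v$ is jointly Lipschitz on the relevant compact set and a second order Taylor estimate yields~\ref{eq:Xi4} with $\sigma(t)=C\,t$, $C$ bounding $\|D_1^2\Xi\|$. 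A direct computation moreover gives $D_1\Xi_1(a,a,(\rho,q))=-q/a$ and, by~\eqref{eq:56}, $D_1\Xi_2(a,a,(\rho,q))=-q^2/(a\rho)$.

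Then I would invoke Theorem~\ref{thm:main}, which provides $\delta\in\mathopen]0,\delta_o\mathclose]$ such that, whenever $(\rho_o,q_o)$ and $a$ obey the stated smallness bounds — note that $\tv(a)=\norma{a'}_{\L1(\reali;\reali)}<\delta$ since $a'\in\L1$, and $(\rho_o,q_o)\in\Lloc1(\reali;A_1)$ because $A_1$ is open around $(\bar\rho,\bar q)$ — problem~\eqref{eq:21}, which in the present notation is exactly~\eqref{eq:53}, admits a solution $(\rho_*,q_*)$ in the sense of Definition~\ref{def:sol}, together with the announced piecewise constant approximations $a^h$ and the $\Lloc1$/pointwise-in-time convergence of the corresponding solutions; componentwise, the junction condition of Theorem~\ref{thm:main} at the jumps of $a^h$ reads $a^h(y+)\,q^h(t,y+)=a^h(y-)\,q^h(t,y-)$ and the second line of~\eqref{eq:52}, i.e.\ precisely~\eqref{eq:52}. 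It then remains to identify~\eqref{eq:53} with~\eqref{eq:51}: here the hypothesis $a'\in\L1(\reali;\reali)$ is what matters, since it forces $a$ to be absolutely continuous, so $\mathcal{I}=\emptyset$ and, in the decomposition~\eqref{eq:20}, $Da=a'\,\mathcal{L}^1$, $\norma{\mu}=\modulo{a'}\,\mathcal{L}^1$, $v=\sgn a'$. Consequently the jump sums in~\eqref{eq:53} vanish, and using~\eqref{eq:55} with the derivatives above, $D^+_{v(x)}\Xi_1(a,a,(\rho,q))\,\norma{\mu}=-(q/a)\,\mu=-(a'/a)\,q$ and $D^+_{v(x)}\Xi_2(a,a,(\rho,q))\,\norma{\mu}=-\left(q^2/(a\rho)\right)\mu=-(a'/a)\,q^2/\rho$, so the weak formulation in Definition~\ref{def:sol} collapses to the standard weak formulation of~\eqref{eq:51}; hence $(\rho_*,q_*)$ solves~\eqref{eq:51}.

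Once Theorem~\ref{thm:main} is in hand, the argument is essentially bookkeeping, and I expect the only delicate points to be two. The first is checking that the mass--conservation coupling $\Xi_1$ of~\eqref{eq:54} — which is \emph{imposed}, not free — together with $\Xi_2$ satisfies~\ref{eq:Xi1}--\ref{eq:Xi4}: keeping $\mathcal{Z}$ bounded away from the vacuum and $\Omega$ bounded is exactly what makes the $\C1$--Lipschitz dependence and the uniform $\C2$ estimates go through, and one must also remember to shrink $\Omega$ so that~\ref{it:f2}--\ref{it:f3} hold. The second, and genuinely substantive, point is the measure--theoretic reduction of the last paragraph: its crux is the observation that the extra regularity $a'\in\L1$ (rather than $a$ merely $\BV$) annihilates both the atomic and the Cantor part of $Da$, turning the non conservative product of Definition~\ref{def:sol} into the ordinary product $\partial_1\Xi_i(a,a,\cdot)\,a'$ that produces the right hand side of~\eqref{eq:51}.
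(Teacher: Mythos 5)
Your proposal is correct and follows essentially the same route as the paper's proof: check \ref{it:f1}--\ref{it:f3} from~\ref{it:p}, check \ref{eq:Xi1}--\ref{eq:Xi4} for the pair $(\Xi_1,\Xi_2)$ with $\Xi_1$ as in~\eqref{eq:54}, invoke Theorem~\ref{thm:main}, and then use $a'\in\L1$ to kill the atomic and Cantor parts of $Da$ so that~\eqref{eq:53} collapses to~\eqref{eq:51}. The paper states these three steps in a single short paragraph; you have simply supplied the supporting computations (eigenvalues, genuine nonlinearity via $\partial_\rho(\rho\sqrt{p'})>0$, $D_1\Xi_1(a,a,(\rho,q))=-q/a$, and the identification $\mu=a'\mathcal{L}^1$, $v=\sgn a'$), all of which are accurate.
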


\noindent The proof is deferred to~\S~\ref{sec:Applications}.

We now test the above condition against various junction condition
found in the literature, we refer in particular
to~\cite{ColomboGaravello2020} for the motivations and further
information of the conditions considered below. More precisely, with
reference to the labelling in Table~\ref{tab:A}, we consider
definition~\textbf{[L]} from~\cite{ColomboGaravello2007},
condition~\textbf{[p]} from~\cite{BandaHertyKlar1, BandaHertyKlar2},
condition~\textbf{[P]} from~\cite{ColomboGaravello2007,
  ColomboGaravello3} and condition~\textbf{[S]}
from~\cite{ColomboMarcellini2010, GuerraMarcelliniSchleper2009}. All
these conditions differ only in the second component $\Xi_2$, the
first one being fixed as in~\eqref{eq:54} to comply with mass
conservation.

Simple computations lead to the results in Table~\ref{tab:A}, where
the map $a \to R(a; a^-, \rho^-, q^-)$ is the first component of
the solution to the stationary version of~\eqref{eq:51}, parametrized
by the section $a$, i.e.,%
\begin{displaymath}
  \left\{
    \begin{array}{l@{\qquad\qquad}l}
      \dfrac{\d{}~}{\d{a}} q = -\dfrac{1}{a} \, q
      & \rho (a^-) = \rho^-
      \\
      \dfrac{\d{~}}{\d{a}} \left(P (\rho,q)\right) = -\dfrac{1}{a} \, \dfrac{q^2}{\rho}
      & q (a^-) = q^- \,.
    \end{array}
  \right.
\end{displaymath}%
\begin{table}
  \small
  \begin{tabular}{|c|c|c|c|}
    \hline
    & $\Psi_2(a^-,(\rho^-,q^-),a^+,(\rho^+,q^+))$
    & $\Xi_2 (a^+,a^-,(\rho^-,q^-))$
    & $D_1 \Xi_2 (a,a,(\rho,q))$
    \\
    \hline
    \textbf{[L]}
    & $a^+ P(\rho^+,q^+) - a^- P(\rho^-,q^-)$
    & $\left(\dfrac{a^-}{a^+}-1\right)^{\vphantom{\big|}}
      \left(\dfrac{(q^-)^2}{\rho^-} + p (\rho^-)\right)$
    & $-\dfrac{1}{a} \left(\dfrac{q^2}{\rho} + p (\rho)\right)$
    \\[15pt]
    \hline
    \textbf{[p]}
    & $p(\rho^+) - p(\rho^-)$
    & $\left(\left(\dfrac{a^-}{a^+}\right)^2-1\right)
      \dfrac{(q^-)^2}{\rho^-}^{\vphantom{\big|}}$
    & $ -\dfrac{2}{a} \,\dfrac{q^2}{\rho}$
    \\[15pt]
    \hline
    \textbf{[P]}
    & $P(\rho^+,q^+) - P(\rho^-,q^-)$
    & $0$
    & $0$
    \\[3pt]
    \hline
    \textbf{[S]}
    & $
      \begin{array}{l}
        \displaystyle
        \!\!
        a^+ P(\rho^+,q^+) - a^- P(\rho^-,q^-)
        \\
        \displaystyle
        -
        \int_{a^-}^{a^+} p \left( R(\alpha;a^-, \rho^-,q^-) \right) \, d\alpha\!\!
      \end{array}
    $
    & $
      \begin{array}{l}
        \!\!
        \left(\dfrac{a^-}{a^+}-1\right)^{\vphantom{\big|}}
        \left(\dfrac{(q^-)^2}{\rho^-} + p (\rho^-)\right)
        \\
        \displaystyle
        + \dfrac{1}{a^+} \!
        \int_{a^-}^{a^+} \!\!\! p\!\left(R (\alpha; a^-, \rho^-, q^-)\right)\d\alpha
        \\[10pt]
      \end{array}
    $
    & $-\dfrac{1}{a} \,\dfrac{q^2}{\rho}$
    \\
    \hline
  \end{tabular}
  \caption{Various definitions of junction conditions, with the
    corresponding functions $\Psi_2$
    from~\eqref{eq:IntroSingleCoupling}, $\Xi_2$
    from~\eqref{eq:DefPsiCoupling} and its partial derivative
    $\partial_1 \Xi_2$.}
  \label{tab:A}
\end{table}%
On the basis of Theorem~\ref{thm:VarSec}, we know that
condition~\textbf{[S]} is compatible with the smooth
limit~\eqref{eq:51}. Moreover, Theorem~\ref{thm:main} and
Table~\ref{tab:A}, in particular the comparison of the rightmost
column with~\eqref{eq:56}, ensure that all the other conditions do
\emph{not} converge to~\eqref{eq:51} in the smooth pipe limit.

Remark that substituting in~\textbf{[S]} any other smooth function
$R = R (a; a^-, \rho^-, q^-)$ such that
$R (a^-; a^-, \rho^-, q^-) = \rho^-$ yields a new condition at the
junction compatible with the smooth limit.

\subsection{Full Gas Dynamics in Pipes with Varying Section}
\label{sec:full-gas-dynamics}

The full Euler system in a pipeline with smoothly varying section
$a = a (x)$ is
\begin{equation}
  \label{eq:64}
  \left\{
    \begin{array}{l}
      \partial_t\rho + \partial_x (\rho\, v)
      = - \dfrac{a'}{a} \, \rho \, v
      \\
      \partial_t (\rho\, v) + \partial_x \left(\rho \, v^2 + p \right)
      = - \dfrac{a'}{a} \, \rho \, v^2
      \\
      \partial_t \left(\frac{1}{2}\, \rho \, v^2 + \rho \, e\right)
      +
      \partial_x\left(
      v \left(\frac{1}{2}\, \rho \, v^2 + \rho \, e + p\right)
      \right)
      = - \dfrac{a'}{a} \, v
      \left(\frac{1}{2}\, \rho \, v^2 + \rho \, e+ p\right)
    \end{array}
  \right.
\end{equation}
see, for instance~\cite{ColomboMarcellini2010Euler,
  GuerraMarcelliniSchleper2009, TPLiu1979, Liu1982}. Here, $x$ is the
abscissa along the pipe, $\rho > 0$ denotes gas density, $q\in\reali$
the momentum density, $p = p(\rho,s)$ the pressure and $e=e (\rho,s)$
the energy density and $s$ the entropy density. These two latter
functions satisfy
\begin{enumerate}[label={\bf{(E)}}]
\item\label{it:E}
  $e \in \C2(\left]0, +\infty\right[ \times \reali, \left]0,
    +\infty\right[)$ and $\partial_s e (\rho, s) > 0$ for all $\rho>0$
  and $s \in \reali$.
\end{enumerate}
\begin{enumerate}[label={\bf{(P)}}]
\item\label{it:P}
  $p \in \C2(\left]0, +\infty\right[ \times \reali, \left]0,
    +\infty\right[)$,
  $p (\rho, s) = \rho^2 \, \partial_\rho e (\rho, s)$,
  $\partial_\rho p (\rho, s) > 0$ and
  $\partial^2_{\rho\rho} \left(\rho \, p(\rho,s)\right) > 0$ for all
  $\rho>0$ and $s \in \reali$.
\end{enumerate}

\noindent We restrict our attention to the subsonic region where
$v \in \mathopen]0, \sqrt{\partial_\rho p (\rho,s)}\mathclose[$.

The conditions found in the literature,
see~\cite{ColomboMarcellini2010Euler}, imposed at a point $\bar x$
where the section suffers a discontinuity fit into the form
\begin{equation}
  \label{eq:65}
  \left\{
    \begin{array}{l}
      \Delta (a \, \rho\, v) (t, \bar x)= 0
      \\
      \Delta (\rho\, v^2 + p) (t, \bar x)
      =
      \Xi_2 \left(a (x+), a (x-), (\rho, v, s) (t, x-)\right)
      \\
      \Delta \left(
      a \, v \left(\frac{1}{2}\, \rho \, v^2 + \rho \, e + p\right)
      \right) (t, \bar x)
      =
      0
    \end{array}
  \right.
\end{equation}
The conservation of mass imposed by the first equality and the
conservation of energy imposed by the third equality in~\eqref{eq:65}
amount to setting
\begin{displaymath}
  \begin{array}{rcl}
    \Xi_1 \left(a^+, a^-, (\rho^-,v^-,s^-)\right)
    & =
    & \left(\dfrac{a^-}{a^+} - 1\right) \rho^- \, v^-
    \\[12pt]
    \Xi_3 (a^+, a^-, u^-)
    & =
    & \left(\dfrac{a^-}{a^+} - 1 \right)
      \left(
      v^- \left(\frac{1}{2}\, \rho^-  (v^-)^2 + \rho^- \, e^- + p^-\right)
      \right)
    \\
  \end{array}
\end{displaymath}
so that
\begin{displaymath}
  \begin{array}{rcl}
    \partial_1 \Xi_1 \left(a, a, (\rho,v,s)\right)
    & =
    & -\dfrac{1}{a} \, \rho \, v
    \\[3pt]
    \partial_1 \Xi_3 (a,a,u)
    & =
    & -\dfrac{1}{a}
      \left(
      v \left(\frac{1}{2}\, \rho \, v^2 + \rho \, e + p\right)
      \right)
  \end{array}
\end{displaymath}
The second equality in~\eqref{eq:65} is treated in different ways in
the literature, giving rise to conditions analogous to those
considered in \S~\ref{sec:isentr-gas-pipe}. Indeed, Table~\ref{tab:A}
directly extends to the present full $3\times3$ case, simply
understanding the map $R$ as the $\rho$ component
$a \to R (a; a^-, \rho^-, v^-, s^-)$ in the solution to the stationary
Cauchy Problem
\begin{equation}
  \label{eq:67}  \left\{
    \begin{array}{l@{\qquad\qquad}l}
      \dfrac{\d{}}{\d{a}} (\rho\, v)
      = - \dfrac{1}{a} \, \rho \, v
      &\rho (a^-) = \rho^-
      \\[8pt]
      \dfrac{\d{}}{\d{a}} \left(\rho \, v^2 + p \right)
      = - \dfrac{1}{a} \, \rho \, v^2
      & v (a^-) = v^-
      \\[8pt]
      \dfrac{\d{}}{\d{a}} \left(
      v \left(\frac{1}{2}\, \rho \, v^2 + \rho \, e + p\right)
      \right)
      = - \dfrac{1}{a} \, v
      \left(\frac{1}{2}\, \rho \, v^2 + \rho \, e+ p\right)
      & s (a^-) = s^- \,.
    \end{array}
  \right.
\end{equation}

\subsection{Balance Laws with Measure Valued Source Term}

The theory developed in Section~\ref{sec:General} allows to give a
meaning to the following balance law, where the source term is
\emph{non conservative}:
\begin{equation}
  \label{eq:59}
  \partial_t u +\partial_x f(u)
  =
  \partial_\zeta G(\zeta,u) \, D\zeta
\end{equation}
where $G$ is smooth and $\zeta$ has bounded variation. In the case $G$
independent of $u$, we recover the \emph{conservative}
case~\eqref{eq:57}.  In the general, \emph{non} conservative
case,~\eqref{eq:59} can be given different meanings.

A choice consists in setting
\begin{equation}
  \label{eq:58}
  \Xi (z^+, z^-,u^-)
  =
  G (z^+,u^-) - G (z^-,u^-) \,,
\end{equation}
corresponding to the following condition at each point of jump:
\begin{displaymath}
  f (u^+) - f (u^-)
  =
  G (z^+, u^-) - G (z^-, u^-) \,.
\end{displaymath}

The framework developed in the preceding section in connection with
the Cauchy Problem~\eqref{eq:21} comprises~\eqref{eq:59}. Therefore,
we can particularize Definition~\ref{def:sol} to the general case of
non conservative products of the type~\eqref{eq:59}.

\begin{definition}
  \label{def:NonCons}
  Fix an initial datum $u_o \in \Lloc1 (\reali; \reali^n)$.  Let
  $\Xi \in \C2 (\mathcal{Z}\times\mathcal{Z}\times\Omega; \reali^n)$
  be such that
  \begin{equation}
    \label{eq:63}
    D_1 \Xi (z, z, u) = D_z G (z, u) \,.
  \end{equation}
  Then, a map $u \in \C0 ([0,T]; \Lloc1 (\reali; \reali^n))$ with
  $u (t) \in \BV (\reali; \reali^n)$ and left continuous for all
  $t \in [0,T]$, is a \emph{$\Xi$--solution} to~\eqref{eq:21} if for
  all test function
  $\phi \in \Cc1 (\left]0, T\right[ \times \reali; \reali)$,
  \begin{eqnarray}
    \nonumber
    &
    & -\int_0^{+\infty} \int_\Omega
      \left(
      u (t,x) \, \partial_t \phi (t,x)
      +
      f\left(u (t,x)\right) \, \partial_x \phi (t,x)
      \right)
      \d{x} \d{t}
    \\
    \label{eq:61}
    & =
    & \sum_{x \in \mathcal{I}}
      \int_0^{+\infty}
      \Xi\left(\zeta (x+), \zeta (x), u (t, x)\right)
      \phi (t,x) \d{t}
    \\
    \nonumber
    &
    & +
      \int_0^{+\infty} \int_{\reali}
      D_z G\left(\zeta (x), u (t,x)\right)
      \phi (t,x) \, D\mu(x)\, \d{t}
  \end{eqnarray}
  where $\mathcal{I}$ is the set of jump points of $\zeta$ and $\mu$
  is as in~\eqref{eq:20}, and moreover $u (0) = u_o$.
\end{definition}
This definition clearly separates those part of the solution that
depend exclusively on~\eqref{eq:59} from those part, in the middle
term in~\eqref{eq:61}, that depend on the arbitrary choice of $\Xi$.

In particular, the choice~\eqref{eq:58} yields
\begin{equation}
  \label{eq:69}
  \Xi \left(\zeta (x+), \zeta (x),u (t,x)\right)
  =
  G \left(\zeta (x+),u (t, x)\right)
  -
  G \left(\zeta (x),u (t,x)  \right)
\end{equation}
where we keep using the left continuous representatives. For
completeness, we remark that the alternative choice
$ \Xi (z^+, z^-,u^-) = G (z^+,u^+) - G (z^-,u^+)$ also meets
condition~\eqref{eq:63}.

A straightforward application of Theorem~\ref{thm:main} now ensures
the existence of $\Xi$--solutions to~\eqref{eq:59}, as soon as
$G \in \C2 (\mathcal{Z}\times \Omega; \reali^{n\times m})$,
$\zeta \in \BV (\reali; \mathcal{Z})$,
$\Xi \in \C2 (\mathcal{Z} \times \mathcal{Z} \times\Omega; \reali^m)$
and satisfies~\eqref{eq:63}. Moreover, these solutions are limits of
\emph{``discretized''} approximations where~\eqref{eq:DefPsiCoupling}
is imposed to the points of jump in $\zeta$.

\section{Technical Details}

Below, by $\O$ we denote a constant depending exclusively on $f$ and
$\Xi$.

\subsection{Preliminary Results}
\label{sub:GerneralSingle}

First, we prove a Lipschitz-type estimate on the map $\Xi$ which we
use throughout this paper.

\begin{lemma}
  \label{lem:EstimateXi2}
  Assume that~\ref{eq:Xi1}, \ref{eq:Xi3} hold. Then,
  \begin{equation}
    \label{eq:lipXi}
    \left\|\Xi(z^{+},z^{-},u_{2}) - \Xi(z^{+},z^{-},u_{1})\right\|
    =
    \O \, \norma{z^{+}-z^{-}} \, \norma{u_{2}-u_{1}} \, .
  \end{equation}
\end{lemma}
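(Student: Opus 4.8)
The plan is to exploit assumption~\ref{eq:Xi3}, which says $\Xi(z,z,u)=0$ for all $z$ and $u$, so that $\Xi(z^+,z^-,\cdot)$ ``vanishes to first order'' in $z^+-z^-$; combined with the Lipschitz regularity of $\Xi$ as a map into $\C1(\Omega;\reali^n)$ from~\ref{eq:Xi1}, this should force the $\C0$-difference in the $u$ variable to be controlled by the product $\norma{z^+-z^-}\,\norma{u_2-u_1}$. Concretely, I would introduce the auxiliary function $g(u) := \Xi(z^+,z^-,u) - \Xi(z^-,z^-,u) = \Xi(z^+,z^-,u)$, the last equality by~\ref{eq:Xi3}. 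The quantity to estimate is then $\norma{g(u_2)-g(u_1)}$.

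First I would observe that, by the mean value inequality along the segment $[u_1,u_2]\subseteq\Omega$ (or more carefully, since $\Omega$ need only be open, by working along a path or simply noting that the estimate is local and $\Omega$ may be shrunk as in the standing conventions),
\begin{displaymath}
  \norma{g(u_2)-g(u_1)} \le \norma{u_2-u_1}\;\sup_{u\in[u_1,u_2]} \norma{Dg(u)} .
\end{displaymath}
Next, $Dg(u) = D_u\Xi(z^+,z^-,u) - D_u\Xi(z^-,z^-,u)$, and this is exactly the difference of the $u$-derivatives of the $\C1(\Omega;\reali^n)$-valued maps $\Xi(z^+,z^-,\cdot)$ and $\Xi(z^-,z^-,\cdot)$ evaluated at the same point $u$. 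By~\ref{eq:Xi1}, the map $(w^+,w^-)\mapsto\Xi(w^+,w^-,\cdot)$ is Lipschitz from $\mathcal{Z}\times\mathcal{Z}$ into $\C1(\Omega;\reali^n)$; keeping the second slot fixed at $z^-$ and moving the first slot from $z^-$ to $z^+$,
\begin{displaymath}
  \norma{\Xi(z^+,z^-,\cdot) - \Xi(z^-,z^-,\cdot)}_{\C1(\Omega;\reali^n)}
  = \O\,\norma{z^+-z^-},
\end{displaymath}
and in particular $\sup_{u\in\Omega}\norma{Dg(u)} = \O\,\norma{z^+-z^-}$. Combining the two displays gives~\eqref{eq:lipXi}.

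The only real subtlety is the passage from a bound on a difference of $\C1$ functions to a bound on their $u$-gradients and the use of the segment $[u_1,u_2]$: this requires either that $\Omega$ be convex (which is the effect of ``possibly restricting $\Omega$'' in the standing assumptions, since all estimates are needed only near $\bar u$) or a minor adaptation via a local reduction, so I would phrase it as a local statement near the reference state. One should also be slightly careful that the $\C1$-norm used in~\ref{eq:Xi1} dominates the sup of the operator norm of $D_u\Xi$, which it does by definition of the $\C1$ norm. With these points noted, the argument is essentially a two-line computation and I do not expect any genuine obstacle; the substantive content is purely the combination of the normalization~\ref{eq:Xi3} with the Lipschitz hypothesis~\ref{eq:Xi1}.
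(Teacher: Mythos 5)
Your argument is correct and is essentially the paper's proof: both exploit~\ref{eq:Xi3} to make $D_u\Xi(z^-,z^-,\cdot)\equiv 0$ available as a comparison term, integrate $D_u\Xi(z^+,z^-,\cdot)$ along the segment from $u_1$ to $u_2$, and then bound the resulting $u$-gradient difference by $\O\,\norma{z^+-z^-}$ via the $\C1$-valued Lipschitz hypothesis~\ref{eq:Xi1}. The only cosmetic difference is that you package the zero term into an auxiliary map $g$, whereas the paper inserts it directly inside the integral.
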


\begin{proof}
  Since the map $u\mapsto \Xi(z^{+},z^{-},u)$ is smooth, we can
  compute
  \begin{displaymath}
    \begin{split}
      &\left\|\Xi(z^{+},z^{-},u_{2}) -
        \Xi(z^{+},z^{-},u_{1})\right\|\\
      &\quad\le \left\|u_{2}-u_{1}\right\|
      \int_{0}^{1}\left\|D_{u}\Xi(z^{+},z^{-},u_{1}+s\left(u_{2}-u_{1}\right))
        -D_{u}\Xi(z^{-},z^{-},u_{1}+s\left(u_{2}-u_{1}\right))\right\|\d{s}\\
      &\quad \le \O \left\|z^{+}-z^{-}\right\| \,
      \left\|u_{2}-u_{1}\right\|,
    \end{split}
  \end{displaymath}
  where we used the equality
  $D_{u}\Xi(z^{-},z^{-},u_{1}+s\left(u_{2}-u_{1}\right))=0$.
\end{proof}

Introduce a map $T$ related to the generalized Riemann problem.

\begin{lemma}
  \label{lem:ExistenceT}
  Let $f$ satisfy~\ref{it:f1}--\ref{it:f3}, $\Xi$
  satisfy~\ref{eq:Xi1}, \ref{eq:Xi3} and $A_{i_o}$ be as
  in~\eqref{eq:23}. Then, for any $\bar z \in \mathcal{Z}$ and
  $\bar u \in A_{i_o}$, there exists $\bar\delta > 0$ and a Lipschitz
  map
  \begin{equation*}
    T \colon B(\bar z;\bar\delta)^2 \times B(\bar u;\bar \delta) \to A_{i_o}
  \end{equation*}
  such that
  \begin{equation*}
    \begin{cases}
      f (u^+) - f (u^-) = \Xi (z^+,z^-,u^-)
      \\
      z^+, z^-\in B(\bar z;\bar\delta)
      \\
      u^+,u^-\in B(\bar u;\bar\delta)
    \end{cases}
    \Longleftrightarrow \quad u^+=T(z^+, z^-,u^-) \, .
  \end{equation*}
  Furthermore,
  \begin{eqnarray}
    \label{eq:37}
    \norma{T(z^+, z^-,u^-)-u^-}
    & =
    & \O  \norma{z^+-z^-} \, ,
    \\
    \label{eq:38}
    \norma{T(z^+, z^-,u_2)-T(z^+, z^-,u_1)-(u_2-u_1)}
    & =
    & \O \norma{z^+-z^-} \, \norma{u_2-u_1} \, .
  \end{eqnarray}
\end{lemma}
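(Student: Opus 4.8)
The plan is to set up the junction condition $f(u^+)-f(u^-)=\Xi(z^+,z^-,u^-)$ as the zero set of a smooth map and apply the implicit function theorem, exploiting the fact that $\bar u \in A_{i_o}$ makes $Df(\bar u)$ an isomorphism-free obstruction in the relevant sense. Concretely, define $\Phi\colon B(\bar z;\rho)^2\times B(\bar u;\rho)^2 \to \reali^n$ by $\Phi(z^+,z^-,u^-,u^+)=f(u^+)-f(u^-)-\Xi(z^+,z^-,u^-)$. By \ref{it:f1} and \ref{eq:Xi1} this is $\C1$ in all variables jointly (indeed $\Xi(z^+,z^-,\cdot)\in\C1(\Omega;\reali^n)$ with Lipschitz dependence on $(z^+,z^-)$, which upgrades to joint continuity of $D_u\Phi$; since we only need the conclusion near $\bar u$ this regularity suffices, and one invokes the Lipschitz implicit function theorem rather than the classical one if joint $\C1$-ness fails). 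By \ref{eq:Xi3}, $\Phi(\bar z,\bar z,\bar u,\bar u)=0$. The partial differential $D_{u^+}\Phi(\bar z,\bar z,\bar u,\bar u)=Df(\bar u)$ is invertible because $\bar u\in A_{i_o}\subset\Omega$ and \ref{it:f2} guarantees all eigenvalues $\lambda_j(\bar u)$ are nonzero (in fact $\lambda_{i_o}(\bar u)<0<\lambda_{i_o+1}(\bar u)$). Hence the implicit function theorem yields $\bar\delta>0$ and a Lipschitz (indeed $\C1$) map $T\colon B(\bar z;\bar\delta)^2\times B(\bar u;\bar\delta)\to B(\bar u;\rho)$ with $u^+=T(z^+,z^-,u^-)$ characterizing the solutions of $\Phi=0$ in a neighbourhood. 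Shrinking $\bar\delta$ if needed, $T$ takes values in $A_{i_o}$ since $A_{i_o}$ is open and $T(\bar z,\bar z,\bar u)=\bar u\in A_{i_o}$.

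For the quantitative estimates, I would argue directly from the defining identity. To get \eqref{eq:37}: since $f(T(z^+,z^-,u^-))-f(u^-)=\Xi(z^+,z^-,u^-)$ and $\Xi(z^-,z^-,u^-)=0$ by \ref{eq:Xi3}, the right-hand side is bounded by $\O\,\norma{z^+-z^-}$ using the Lipschitz continuity of $\Xi$ in its first argument from \ref{eq:Xi1}; on the left, writing $f(u^+)-f(u^-)=\left(\int_0^1 Df(u^-+s(u^+-u^-))\,\d s\right)(u^+-u^-)$ and noting the integrated Jacobian is invertible with uniformly bounded inverse on $B(\bar u;\bar\delta)$ (a consequence of \ref{it:f2} and continuity, shrinking $\bar\delta$), we conclude $\norma{u^+-u^-}=\O\,\norma{z^+-z^-}$, which is \eqref{eq:37}.

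For \eqref{eq:38}, abbreviate $u_k^+=T(z^+,z^-,u_k)$, $k=1,2$. Subtracting the two junction identities gives
\begin{displaymath}
  f(u_2^+)-f(u_1^+) = f(u_2)-f(u_1) + \Xi(z^+,z^-,u_2)-\Xi(z^+,z^-,u_1).
\end{displaymath}
Writing each of $f(u_2^+)-f(u_1^+)$ and $f(u_2)-f(u_1)$ via the integrated Jacobian as above, and using $Df(\bar u)$-invertibility with a Lipschitz estimate on $Df$ (so that the integrated Jacobians differ from $Df(\bar u)$ by $\O\,\bar\delta$), one isolates $(u_2^+-u_1^+)-(u_2-u_1)$ and bounds it by $\O\,\bar\delta\,\norma{u_2-u_1}$ plus the contribution of the $\Xi$-difference. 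That last term is exactly $\O\,\norma{z^+-z^-}\,\norma{u_2-u_1}$ by Lemma~\ref{lem:EstimateXi2}. Combined with the already-proven \eqref{eq:37} (which controls $\norma{u_k^+-u_k}$, hence lets us replace the $\O\,\bar\delta$ factor by $\O\,\norma{z^+-z^-}$ after a further shrinking argument, or more directly by feeding the bound back into itself), this yields \eqref{eq:38}.

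The main obstacle is the regularity bookkeeping: \ref{eq:Xi1} only gives $\Xi(z^+,z^-,\cdot)\in\C1$ with Lipschitz—not necessarily $\C1$—dependence on $(z^+,z^-)$, so the classical implicit function theorem does not apply verbatim. The clean fix is to apply the implicit function theorem only in the $u^+$ variable (in which $\Phi$ is genuinely $\C1$ with the required non-degeneracy at $\bar u$) treating $(z^+,z^-,u^-)$ as parameters, and then verify Lipschitz dependence of the solution on these parameters by hand from the defining equation, using \ref{eq:Xi1} and Lemma~\ref{lem:EstimateXi2}; this is precisely what the estimates \eqref{eq:37}--\eqref{eq:38} encode, so no separate argument is needed beyond what is sketched above.
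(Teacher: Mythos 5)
Your proposal is correct and follows essentially the same route as the paper. Solving $f(u^+)=f(u^-)+\Xi(z^+,z^-,u^-)$ for $u^+$ by the implicit function theorem in the $u^+$ variable (with $(z^+,z^-,u^-)$ as parameters) yields exactly the paper's explicit formula $T(z^+,z^-,u^-)=f^{-1}\bigl(f(u^-)+\Xi(z^+,z^-,u^-)\bigr)$, and your integrated-Jacobian bookkeeping for \eqref{eq:37}--\eqref{eq:38}, including the absorption/feeding-back step, is a hands-on rendering of the paper's appeal to the smooth auxiliary map $b(\xi,\Delta,v)=f^{-1}(f(u_1+v)+\xi+\Delta)-f^{-1}(f(u_1)+\xi)-v$ and its bilinear estimate from~\cite[\S~2.9]{Bressan2000}, together with Lemma~\ref{lem:EstimateXi2}.
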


\begin{proof}
  Since $\bar u \in A_{i_o}$, \ref{it:f1} and~\ref{it:f2} ensure that
  the function $f$ is locally invertible at $\bar u$. We define
  \begin{equation}
    \label{eq:defT}
    T (z^+, z^-, u^{-})
    =f^{-1}\left(f(u^{-}) + \Xi(z^{+},z^{-},u^{-})\right).
  \end{equation}
  By \ref{eq:Xi1}, \ref{eq:Xi3} we compute
  \begin{eqnarray*}
    \left\|T(z^+, z^-, u^{-})-u^{-}\right\|
    & =
    & \left\|T(z^+, z^-, u^{-})-f^{-1}\left(f(u^{-})\right)\right\|
    \\
    & =
    & \O
      \left\|\Xi(z^{+},z^{-},u^{-}) - \Xi(z^{-},z^{-},u^{-})\right\|
    \\
    & =
    & \O
      \left\|z^{+}-z^{-}\right\|,
  \end{eqnarray*}
  proving~\eqref{eq:37}.  Introduce the smooth map
  \begin{displaymath}
    b\left(\xi,\Delta,v\right)
    =
    f^{-1}\left(f(u_{1}+v)+\xi + \Delta\right)
    - f^{-1}\left(f(u_{1})+\xi\right) - v \,.
  \end{displaymath}
  Since $b\left(\xi,0,0\right)=b\left(0,0,v\right)=0$, the estimate
  \begin{displaymath}
    b\left(\xi,\Delta,v\right)
    =
    \O \left[\left\|\xi\right\| \cdot \left\|v\right\| + \left\|\Delta\right\|\right]
  \end{displaymath}
  holds, see~\cite[\S~2.9]{Bressan2000}. The left hand side
  of~\eqref{eq:38} can be written as
  \begin{displaymath}
    \begin{split}
      &\left\|T(z^+, z^-,u_2)-T(z^+, z^-,u_1)-(u_2-u_1)\right\|
      \\
      &\quad = \left\|b\left[\Xi(z^{+},z^{-},u_{1}),
          \Xi(z^{+},z^{-},u_{2}) -
          \Xi(z^{+},z^{-},u_{1}),u_{2}-u_{1}\right]\right\|
      \\
      &\quad\le \O \left[\left\|\Xi(z^{+},z^{-},u_{1})\right\|\cdot
        \left\|u_{2}-u_{1}\right\| +\left\|\Xi(z^{+},z^{-},u_{2}) -
          \Xi(z^{+},z^{-},u_{1})\right\|\right]
      \\
      & \quad \le \O \, \norma{z^+-z^-} \, \norma{u_2-u_1} \,.
    \end{split}
  \end{displaymath}
\end{proof}

\subsection{The Case $\zeta$ Piecewise Constant}
\label{sub:GeneralFinite}

In this section, we consider the case of $\mathcal{I}$ being finite,
with $\zeta$ being piecewise constant. We index the points
$x \in \mathcal{I}$ so that $x_i < x_j$ if and only if $i < j$.  In
this case, the general Definition~\ref{def:sol} reduces to the
following one, often found in the literature, see for
instance~\cite{ColomboHolden2016, ColomboMarcellini2010Euler,
  ColomboMarcellini2010, HoldenRisebro1999}.

\begin{definition}
  \label{def:WeakSolutionOneJunction}
  A weak solution to the Cauchy Problem~(\ref{eq:21}) with a piecewise
  constant $\zeta$ is a map
  $u \in \C0(\left[0, +\infty\right[; \Lloc1(\reali;\Omega))$ with
  $u(t) \in \BV(\reali;\Omega)$, left continuous, for all $t \geq 0$,
  such that for all
  $\phi \in \Cc1(\left]0,+\infty\right[ \times\reali; \reali)$ whose
  support does not intersect
  $\left[0,+\infty\right[\times \mathcal{I}$
  \begin{equation}
    \label{eq:27}
    \int_0^{+\infty} \int_{\reali}
    \left(u \, \partial_t \phi + f(u) \, \partial_x\phi\right) \d x \d t
    = 0 \,,
  \end{equation}
  $u (0) = u_o$ and for all $x \in \mathcal{I}$
  \begin{displaymath}
    f \left(u (t, x+)\right) - f\left(u (t, x)\right) =
    \Xi \left(\zeta (x+),\zeta (x),u(t,x)\right)
    \quad \mbox{for a.e. } t \in \mathopen[0, +\infty\mathclose[ \,.
  \end{displaymath}
\end{definition}

\subsubsection{The Generalized Riemann Problem}
\label{sub:GRP}

By \emph{Generalized Riemann Problem} we consider the Cauchy
Problem~(\ref{eq:21}) with $\zeta$ and the initial datum $u_o$ being
as follows:
\begin{equation}
  \label{eq:25}
  \zeta (x)
  =
  z^- \, \caratt{\left]-\infty, 0\right[} (x)
  +
  z^+ \, \caratt{\left]0, +\infty\right[} (x)
  \quad \mbox{ and } \quad
  u_o (x)
  =
  u^\ell \, \caratt{\left]-\infty, 0\right[} (x)
  +
  u^r \, \caratt{\left]0, +\infty\right[} (x) \,.
\end{equation}

For $u \in A_{i_o}$, call $\sigma_i \to H_i (\sigma_i) (u)$ the Lax
curve of the $i$-th family exiting $u$, see~\cite[\S~5.2]{Bressan2000}
or~\cite[\S~9.3]{Dafermos2000}. Introduce recursively the states
$w_0, \ldots, w_{n+1}$ with $w_0 = u^\ell,\, w_{n+1} = u^r$ and
\begin{displaymath}
  \left\{
    \begin{array}{r@{\,}c@{\,}l@{\qquad}r@{\,}c@{\,}l}
      w_{i+1}
      & =
      & H_{i+1}(\sigma_{i+1})(w_i)
      & \mbox{ if }i
      & =
      & 0, \ldots, i_o-1,
      \\
      w_{i_o+1}
      & =
      & T(z^+, z^-, w_{i_o})
      \\
      w_{i+1}
      & =
      & H_{i}(\sigma_{i})(w_i)
      & \mbox{ if }i
      & =
      &i_o+1, \ldots, n\,.
    \end{array}
  \right.
\end{displaymath}
If $z^{+}-z^{-}$ is sufficiently small,
\cite[Lemma~3]{AmadoriGosseGuerra2002} ensures that the waves' sizes
$(\sigma_1, \ldots, \sigma_n)$ and the states $(w_1, \ldots, w_n)$
exist, are uniquely defined and are Lipschitz continuous functions of
$z^-, z^+, u^\ell, u^r$, which ensures also the well posedness of the
Generalized Riemann Problem~(\ref{eq:21}),~\eqref{eq:25}. The
following notation is of use below:
\begin{equation}
  \label{eq:39}
  (\sigma_1, \ldots, \sigma_n)
  =
  E(z^+, z^-, u^r,  u^\ell) \,.
\end{equation}
We thus write the solution $u$ to the Generalized Riemann
Problem~(\ref{eq:21})~\eqref{eq:25}, in the sense of
Definition~\ref{def:WeakSolutionOneJunction}, as the glueing along
$x = 0$ of the Lax solutions to the (standard) Riemann Problems
\begin{displaymath}
  \left\{
    \begin{array}{l}
      \partial_t u + \partial_x f (u) = 0
      \\
      u (0,x)
      = u^\ell \caratt{\left]-\infty, 0\right[} (x)
      +
      w^{i_o} \caratt{\left]0, +\infty\right[} (x),
    \end{array}
  \right.
  \quad
  \left\{
    \begin{array}{l@{}}
      \partial_t u + \partial_x f (u) = 0
      \\
      u (0,x)
      = w^{i_o+1} \caratt{\left]-\infty, 0\right[} (x)
      +
      u^r \caratt{\left]0, +\infty\right[} (x).
    \end{array}
  \right.
\end{displaymath}

\subsubsection{Interaction Estimates}

Let $u^\ell, u^r \in \Omega$ be initial states for the Generalized
Riemann Problem~(\ref{eq:21})~\eqref{eq:25}. We separate the waves
with negative or positive propagation speed as follows:
\begin{align}
  \begin{split}
    \pmb\sigma'=(\sigma_1,\dots,\sigma_{i_o},0,\dots,0),&\qquad \pmb\sigma''=(0,\dots,0,\sigma_{i_o+1},\dots,\sigma_n),\\
    \pmb\sigma &=\pmb\sigma'+\pmb\sigma''\in\reali^n.
  \end{split}
\end{align}
Given two $n$-tuples of waves $\pmb{\alpha}$ and $\pmb{\beta}$, the
waves $i$ with size $\alpha_i$ and $j$ with size $\beta_j$ are
\emph{approaching} whenever $i>j$ or
$\min\left\{\alpha_i, \beta_j\right\} < 0$. Call
$\mathcal{A}_{\pmb{\alpha},\pmb{\beta}}$ the set of these pairs.

In the following we recall several lemmas which are straightforward
generalizations of results in~\cite{AmadoriGosseGuerra2002}.
\begin{lemma}
  \label{lem:AGG4}
  Let $f$ satisfy~\ref{it:f1}--\ref{it:f3}, $\Xi$
  satisfy~\ref{eq:Xi1}, \ref{eq:Xi3} and $A_{i_o}$ be as
  in~\eqref{eq:23}. Fix $z^+, z^- \in \mathcal{Z}$ and
  $u^\ell, u^r\in A_{i_o}$. Then, there exists a $\delta>0$ such that
  if $z^+, z^-\in B(\bar z;\delta)$,
  $u^\ell, u^r \in B(\bar u; \delta)$, we have
  \begin{align*}
    \norma{u^r - u^\ell}
    & = \O \left(\norma{\pmb\sigma} + \norma{z^+-z^-}\right),
    \\
    \norma{\pmb\sigma}
    & = \O \left(\norma{u^r -  u^\ell} + \norma{z^+-z^-} \right).
  \end{align*}
\end{lemma}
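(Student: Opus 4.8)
The plan is to extract both inequalities directly from the recursive construction of the states $w_0, \dots, w_{n+1}$ in~\S~\ref{sub:GRP}, using that its two building blocks — the Lax curves $\sigma_i \mapsto H_i(\sigma_i)(\cdot)$ and the junction map $T$ of Lemma~\ref{lem:ExistenceT} — are Lipschitz continuous and reduce to the identity in the degenerate cases (respectively $\sigma_i = 0$ and $z^+ = z^-$). First I would fix $\delta$ smaller than the $\bar\delta$ of Lemma~\ref{lem:ExistenceT} and small enough that the construction of~\S~\ref{sub:GRP} applies on $B(\bar z;\delta)^2 \times B(\bar u;\delta)^2$; then, by~\eqref{eq:37} and the Lipschitz continuity of the Lax curves, all the intermediate states $w_i$ lie in a ball $B(\bar u; \O\,\delta) \subseteq A_{i_o}$, so the construction is consistent and, by~\cite[Lemma~3]{AmadoriGosseGuerra2002}, the wave-size map $E$ of~\eqref{eq:39} is Lipschitz continuous on the relevant domain.

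For the first inequality I would telescope $u^r - u^\ell = w_{n+1} - w_0 = \sum_{i=0}^{n} (w_{i+1} - w_i)$. The step $w_{i_o} \mapsto w_{i_o+1} = T(z^+, z^-, w_{i_o})$ contributes $\O\,\norma{z^+ - z^-}$ by~\eqref{eq:37}; each of the remaining steps $w_i \mapsto H_j(\sigma_j)(w_i)$ contributes $\O\,\modulo{\sigma_j}$, since $H_j(0)(w) = w$ and $\sigma_j \mapsto H_j(\sigma_j)(w)$ is Lipschitz uniformly for $w$ near $\bar u$. Summing over the finitely many characteristic families then gives $\norma{u^r - u^\ell} = \O\bigl(\norma{\pmb\sigma} + \norma{z^+ - z^-}\bigr)$.

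For the second inequality I would observe that when $z^+ = z^-$, assumption~\ref{eq:Xi3} makes $T(z^-, z^-, \cdot)$ the identity, so the Generalized Riemann Problem collapses to the standard Riemann Problem for~\eqref{eq:GeneralSystem} with data $u^\ell, u^r$; if moreover $u^\ell = u^r$, its unique Lax solution is the constant state and all wave sizes vanish, hence $E(z^-, z^-, u^\ell, u^\ell) = 0$. Since $\pmb\sigma = E(z^+, z^-, u^r, u^\ell)$, the Lipschitz continuity of $E$ immediately yields
$\norma{\pmb\sigma} = \norma{E(z^+, z^-, u^r, u^\ell) - E(z^-, z^-, u^\ell, u^\ell)} = \O\bigl(\norma{z^+ - z^-} + \norma{u^r - u^\ell}\bigr)$,
which is the claim. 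The one point I would be careful about — and the only genuine obstacle here — is the uniformity of all these estimates: one must check that with $\delta$ small the entire chain of intermediate states stays inside the domains where Lemma~\ref{lem:ExistenceT} and the Lax-curve estimates apply, so that the Lipschitz constants, and hence the implicit $\O$ constants, do not degenerate; granting the results already cited above, everything else is routine telescoping.
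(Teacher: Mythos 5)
Your proposal is correct and takes essentially the same route as the paper: for the first inequality you telescope along the intermediate states $w_i$ and bound the $T$ step via~\eqref{eq:37}; for the second you use the Lipschitz continuity of $E$ measured against a reference quadruple at which $E$ vanishes. The only difference is cosmetic — the paper compares $E(z^+,z^-,u^r,u^\ell)$ with $E\bigl(z^+,z^-,T(z^+,z^-,u^\ell),u^\ell\bigr)=0$ and then applies~\eqref{eq:37} once more, whereas you compare it directly with $E(z^-,z^-,u^\ell,u^\ell)=0$, exploiting the Lipschitz dependence of $E$ on $z^+$ and $u^r$ simultaneously; both choices give the same estimate.
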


\begin{proof}
  By Lemma~\ref{lem:ExistenceT}, we get
  \begin{displaymath}
    \norma{u^r - u^\ell}
    \le \sum_{i=1}^{n+1} \norma{w_i - w_{i-1}}
    = \O \norma{\pmb{\sigma}} {+} \norma{T(z^+, z^-,w_{i_o})-w_{i_o}}
    = \O \! \left( \norma{\pmb{\sigma}} {+} \norma{z^+-z^-} \right) \!.
  \end{displaymath}
  By the Lipschitz continuity of $E$ as defined in~\eqref{eq:39}, we
  get
  \begin{align*}
    \norma{\pmb\sigma}
    & = \norma{E(z^+, z^-, u^r, u^\ell)
      -
      E(z^+, z^-, T(z^+, z^-,u^\ell), u^\ell)}
    \\
    & = \O \norma{u^r-T(z^+, z^-,u^\ell)}
    \\
    & = \O \left(
      \norma{u^r -u^\ell} + \norma{u^\ell -T(z^+, z^-,u^\ell)} \right)
    \\
    & = \O \left( \norma{u^r -u^\ell} + \norma{z^+ - z^-} \right)
  \end{align*}
  completing the proof.
\end{proof}

\begin{lemma}[{\cite[Lemma~5]{AmadoriGosseGuerra2002}}]
  \label{lem:AGG5}
  Let $f$ satisfy~\ref{it:f1}--\ref{it:f3} and $A_{i_o}$ be as
  in~\eqref{eq:23}. For $u \in \Omega$ sufficiently close to
  $\bar u\in A_{i_o}$ and $y_1,y_2,\pmb\alpha \in \reali^n$
  sufficiently small, we have
  \begin{equation*}
    \norma{y_2 + H(\pmb\alpha)(u) - H(\pmb\alpha)(u+y_1)}
    =
    \O \left(
      \norma{\pmb\alpha} \, \norma{y_1} + \norma{y_1-y_2}
    \right).
  \end{equation*}
\end{lemma}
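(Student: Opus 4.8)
The plan is to reduce the statement to the elementary estimate on compositions of the (smooth) vector fields $H_i$ that was already invoked in the proof of Lemma~\ref{lem:ExistenceT}. Recall that $H(\pmb\alpha)(u) = H_n(\alpha_n)\circ\cdots\circ H_1(\alpha_1)(u)$ is the composition of the single--family Lax curves, and by \ref{it:f1}--\ref{it:f3} each $H_i$ is a Lipschitz (indeed smooth away from the linearly degenerate case, $\C{1,1}$ in general) function of $(\alpha_i,u)$ with $H_i(0)(u)=u$. Hence $(\pmb\alpha,u)\mapsto H(\pmb\alpha)(u)$ is Lipschitz, $H(0)(u)=u$, and $D_u H(\pmb\alpha)(u) = \Id + \O\norma{\pmb\alpha}$ for $\pmb\alpha,u$ in a small enough neighbourhood of $(0,\bar u)$.

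The key steps, in order, are as follows. First I would split the quantity to be estimated as
\begin{displaymath}
  y_2 + H(\pmb\alpha)(u) - H(\pmb\alpha)(u+y_1)
  =
  (y_2 - y_1)
  +
  \bigl( y_1 + H(\pmb\alpha)(u) - H(\pmb\alpha)(u+y_1) \bigr),
\end{displaymath}
so that it suffices to bound the second bracket by $\O\,\norma{\pmb\alpha}\,\norma{y_1}$. Second, I would write that bracket as an integral along the segment from $u$ to $u+y_1$:
\begin{displaymath}
  y_1 + H(\pmb\alpha)(u) - H(\pmb\alpha)(u+y_1)
  =
  - \int_0^1 \Bigl( D_u H(\pmb\alpha)(u + s\,y_1) - \Id \Bigr) \, y_1 \; \d{s}.
\end{displaymath}
Third, I would invoke the bound $\norma{D_u H(\pmb\alpha)(v) - \Id} = \O\,\norma{\pmb\alpha}$, valid uniformly for $v$ near $\bar u$ and $\pmb\alpha$ small, which follows from $D_u H(0)(v) = \Id$ together with the Lipschitz dependence of $D_u H$ on $\pmb\alpha$ (equivalently, this is exactly the type of estimate recorded in~\cite[\S~2.9]{Bressan2000} and used for the map $b$ in the proof of Lemma~\ref{lem:ExistenceT}). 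Plugging this in and using $\norma{y_1-y_2}$ for the first term yields the claim.

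The only delicate point is the regularity of $H$ in the $u$ variable: when a field is merely linearly degenerate the Lax curve is an integral curve of a $\C1$ vector field and $H$ is $\C1$ in $u$, so $D_u H$ is continuous but a priori only continuous — not Lipschitz — in $u$; however, what the argument actually needs is Lipschitz dependence of $D_u H(\pmb\alpha)(\cdot)$ on the parameter $\pmb\alpha$ near $\pmb\alpha = 0$, together with $D_u H(0)(v) = \Id$, and this does hold since the dependence on the wave sizes is smooth. I therefore expect this regularity bookkeeping — rather than any genuine computation — to be the main obstacle, and it is handled exactly as in~\cite[Lemma~5]{AmadoriGosseGuerra2002}, which the statement cites and whose proof transfers verbatim to the present setting because the non-characteristic junction at $x=0$ plays no role here: $H$ is the pure conservation-law wave map and $\Xi$ does not enter.
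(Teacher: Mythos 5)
The paper does not prove this lemma at all: it is stated with the attribution ``[AmadoriGosseGuerra2002, Lemma~5]'' and taken as given, so there is no in-paper argument to compare against. Your self-contained proof is nonetheless correct, and it is the natural one: the decomposition isolating $y_2-y_1$, the integral representation
\begin{displaymath}
  y_1 + H(\pmb\alpha)(u) - H(\pmb\alpha)(u+y_1)
  = -\int_0^1\bigl(D_u H(\pmb\alpha)(u+s\,y_1)-\Id\bigr)\,y_1\,\d{s},
\end{displaymath}
and the bound $\norma{D_u H(\pmb\alpha)(v)-\Id}=\O\,\norma{\pmb\alpha}$ coming from $H(0)(\cdot)=\Id$ together with Lipschitz dependence of $D_uH$ on $\pmb\alpha$ are exactly what is used in the cited source and in~\cite[\S~2.8]{Bressan2000}. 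Your closing worry about regularity is unnecessary: under~\ref{it:f1} the Lax curves are $\C2$ jointly in the wave size and the base point (this is part of the standard Lax construction), so the map $(\pmb\alpha,u)\mapsto H(\pmb\alpha)(u)$ is $\C2$ and the estimate on $D_uH$ follows directly; linear degeneracy does not degrade this. If anything, you can shorten the proof by applying the single estimate from~\cite[\S~2.8]{Bressan2000} for a $\C2$ map $\Phi$ with $\Phi(0,u)=u$, namely $\Phi(\pmb\alpha,u')-\Phi(\pmb\alpha,u'')-(u'-u'')=\O\,\norma{\pmb\alpha}\,\norma{u'-u''}$, with $\Phi=H$, $u'=u$, $u''=u+y_1$, which is precisely the same estimate invoked for the map $b$ in the proof of Lemma~\ref{lem:ExistenceT}.
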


\begin{lemma}
  \label{lem:AGG6}
  Let $f$ satisfy~\ref{it:f1}--\ref{it:f3}, $\Xi$
  satisfy~\ref{eq:Xi1}, \ref{eq:Xi3} and $A_{i_o}$ be as
  in~\eqref{eq:23}. For $u \in \Omega$ sufficiently close to
  $\bar u \in A_{i_o}$, $z^+, z^-\in \mathcal Z$ sufficiently close to
  $\bar z \in \mathcal{Z}$ and $\pmb\alpha \in \reali^n$ sufficiently
  small, we have
  \begin{equation*}
    \norma{T(z^+, z^-,H(\pmb\alpha)(u))-H(\pmb\alpha)(T(z^+, z^-,u))}
    =
    \O \, \norma{\pmb\alpha} \, \norma{z^+-z^-} \,.
  \end{equation*}
\end{lemma}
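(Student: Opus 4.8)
The plan is to reduce the commutation estimate for $T$ and the Lax curve map $H$ to the analogous already-established facts: the Lipschitz-type estimate~\eqref{eq:38} on $T$ (which says $T(z^+,z^-,\cdot)$ differs from the identity by a map that is $\O\,\norma{z^+-z^-}$-Lipschitz), and the elementary curve estimate of Lemma~\ref{lem:AGG5}. First I would observe that it suffices to treat the two compositions as perturbations of the ``decoupled'' situation where $T$ is replaced by the identity, since in that case $T(z^+,z^-,H(\pmb\alpha)(u)) \rightsquigarrow H(\pmb\alpha)(u)$ and $H(\pmb\alpha)(T(z^+,z^-,u)) \rightsquigarrow H(\pmb\alpha)(u)$ coincide exactly. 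So the entire left-hand side is controlled by the ``size'' of the discrepancy that $T$ introduces.

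Concretely, write $g(u) = T(z^+,z^-,u) - u$, so that by~\eqref{eq:37} we have $\norma{g(u)} = \O\,\norma{z^+-z^-}$ and by~\eqref{eq:38} the map $g$ is Lipschitz with constant $\O\,\norma{z^+-z^-}$. Then
\begin{align*}
  & \norma{T(z^+,z^-,H(\pmb\alpha)(u)) - H(\pmb\alpha)(T(z^+,z^-,u))}
  \\
  & \quad =
  \norma{H(\pmb\alpha)(u) + g(H(\pmb\alpha)(u)) - H(\pmb\alpha)(u + g(u))}
  \\
  & \quad \leq
  \norma{g(H(\pmb\alpha)(u))} +
  \norma{H(\pmb\alpha)(u) - H(\pmb\alpha)(u + g(u))} \,.
\end{align*}
The first term is $\O\,\norma{z^+-z^-}$ by~\eqref{eq:37}. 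For the second, I would apply Lemma~\ref{lem:AGG5} with $y_1 = g(u)$ and $y_2 = 0$ (or, slightly more sharply, note $\norma{H(\pmb\alpha)(u+y_1) - H(\pmb\alpha)(u)} = \O(\norma{\pmb\alpha}\norma{y_1} + \dots)$ — but since we only want a bound of the form $\O\,\norma{\pmb\alpha}\,\norma{z^+-z^-}$ we can afford the crude estimate $\norma{H(\pmb\alpha)(u) - H(\pmb\alpha)(u+g(u))} = \O\,\norma{g(u)}$ from Lipschitz continuity of $H(\pmb\alpha)$, then combine with the Lipschitz dependence of $H$ on $\pmb\alpha$ and $H(0) = \mathrm{Id}$). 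The cleanest route is: subtract and add $H(\pmb 0)(u+g(u)) = u + g(u)$ and $H(\pmb 0)(u) = u$ inside, so that $\norma{H(\pmb\alpha)(u+g(u)) - H(\pmb\alpha)(u)} = \norma{[H(\pmb\alpha) - H(\pmb 0)](u+g(u)) - [H(\pmb\alpha) - H(\pmb 0)](u)}$, and the map $H(\pmb\alpha) - H(\pmb 0)$ is $\O\,\norma{\pmb\alpha}$-Lipschitz by smoothness of $H$, giving a bound $\O\,\norma{\pmb\alpha}\,\norma{g(u)} = \O\,\norma{\pmb\alpha}\,\norma{z^+-z^-}$.

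The only subtlety — and the one point I would be careful about — is that all of these estimates are local, valid only for $u$ near $\bar u \in A_{i_o}$, $z^\pm$ near $\bar z$, and $\pmb\alpha$ small; one must check that the intermediate states $u + g(u) = T(z^+,z^-,u)$, $H(\pmb\alpha)(u)$, and $T(z^+,z^-,H(\pmb\alpha)(u))$ all remain in the neighborhoods where Lemma~\ref{lem:ExistenceT} and Lemma~\ref{lem:AGG5} apply. This is routine: by~\eqref{eq:37} and continuity of $H$, shrinking $\bar\delta$ from Lemma~\ref{lem:ExistenceT} and the smallness thresholds from Lemma~\ref{lem:AGG5} keeps every state within, say, $B(\bar u; \bar\delta)$, after which the chain of inequalities above closes and yields the claimed $\O\,\norma{\pmb\alpha}\,\norma{z^+-z^-}$ bound. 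No genuinely hard step is involved; the result is a bilinear ``commutator'' estimate that follows formally once~\eqref{eq:37}, \eqref{eq:38}, and Lemma~\ref{lem:AGG5} are in hand.
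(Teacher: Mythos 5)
Your proof has a genuine gap, and it occurs at the very first line where you apply the triangle inequality:
\begin{equation*}
  \norma{T(z^+,z^-,H(\pmb\alpha)(u)) - H(\pmb\alpha)(T(z^+,z^-,u))}
  \leq
  \norma{g(H(\pmb\alpha)(u))} + \norma{H(\pmb\alpha)(u) - H(\pmb\alpha)(u + g(u))}\,.
\end{equation*}
Both terms on the right are individually only $\O\,\norma{z^+-z^-}$, \emph{without} the factor $\norma{\pmb\alpha}$. Indeed, as $\pmb\alpha \to 0$ the first term tends to $\norma{g(u)}$ and the second also tends to $\norma{g(u)}$ (both are $\neq 0$ in general), whereas the left-hand side tends to zero; the bilinear bound comes from a cancellation between the two pieces that the triangle inequality destroys. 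Applying Lemma~\ref{lem:AGG5} with $y_1 = g(u)$, $y_2 = 0$ (which you mention) gives only $\O\,(\norma{\pmb\alpha}\norma{g(u)} + \norma{g(u)}) = \O\,\norma{z^+-z^-}$, for the same reason: the $\norma{y_1-y_2}$ term dominates. Your ``cleanest route'' then contains a term-dropping error that masks the problem: since $H(\pmb 0)(w) = w$, one actually has $[H(\pmb\alpha) - H(\pmb 0)](u+g(u)) - [H(\pmb\alpha) - H(\pmb 0)](u) = H(\pmb\alpha)(u+g(u)) - H(\pmb\alpha)(u) - g(u)$, so your claimed identity $\norma{H(\pmb\alpha)(u+g(u)) - H(\pmb\alpha)(u)} = \norma{[H(\pmb\alpha) - H(\pmb 0)](u+g(u)) - [H(\pmb\alpha) - H(\pmb 0)](u)}$ is false; the two sides differ by the $g(u)$ you need to keep track of.

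The correct route, and the one the paper takes, is to apply Lemma~\ref{lem:AGG5} \emph{with $y_2 \neq 0$}: choose $y_1 = T(z^+,z^-,u) - u = g(u)$ and $y_2 = T(z^+,z^-,H(\pmb\alpha)(u)) - H(\pmb\alpha)(u) = g(H(\pmb\alpha)(u))$. Then $y_2 + H(\pmb\alpha)(u) - H(\pmb\alpha)(u+y_1)$ is exactly the commutator you want, and Lemma~\ref{lem:AGG5} yields $\O\bigl(\norma{\pmb\alpha}\,\norma{g(u)} + \norma{g(u) - g(H(\pmb\alpha)(u))}\bigr)$. The first piece is $\O\,\norma{\pmb\alpha}\,\norma{z^+-z^-}$ by~\eqref{eq:37}; the second is $\O\,\norma{z^+-z^-}\,\norma{H(\pmb\alpha)(u)-u} = \O\,\norma{z^+-z^-}\,\norma{\pmb\alpha}$ by~\eqref{eq:38}. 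Notice that~\eqref{eq:38} bounds the \emph{increment} $g(u_2)-g(u_1)$, not $g$ itself --- this is exactly where the $\norma{\pmb\alpha}$ factor comes from, and it is precisely what your decomposition fails to exploit. Equivalently, one can rearrange the commutator as $\bigl[g(H(\pmb\alpha)(u)) - g(u)\bigr] + \bigl[g(u) + H(\pmb\alpha)(u) - H(\pmb\alpha)(u+g(u))\bigr]$ and bound the two brackets separately, but one must keep the $g(u)$ in both; dropping it is fatal.
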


\begin{proof}
  Applying Lemma~\ref{lem:AGG5} with $y_1 = T(z^+, z^-,u)-u$ and
  $y_2 = T(z^+, z^-,H(\pmb\alpha)(u))-H(\pmb\alpha )(u)$ gives
  \begin{align*}
    & \norma{T(z^+, z^-,H(\pmb\alpha)(u)) - H(\pmb\alpha)(T(z^+, z^-,u))}
    \\
    \le \;
    & \O \left(
      \norma{\pmb\alpha} \norma{T(z^+, z^-,u)-u}
      + \norma{T(z^+, z^-,H(\pmb\alpha)(u))-H(\pmb\alpha )(u)-T(z^+, z^-,u)+u}
      \right).
  \end{align*}
  The result follows from Lemma~\ref{lem:ExistenceT}.
\end{proof}

\begin{lemma}
  \label{lem:AGG7}
  Let $f$ satisfy~\ref{it:f1}--\ref{it:f3}, $\Xi$
  satisfy~\ref{eq:Xi1}, \ref{eq:Xi3} and $A_{i_o}$ be as
  in~\eqref{eq:23}. Fix $z^+, z^- \in \mathcal{Z}$ and
  $u^\ell, u^r\in A_{i_o}$. Then, there exists a $\delta>0$ such that
  if $ u^r, u^\ell\in B(\bar u;\delta)$ and
  $z^+, z^-\in B(\bar z;\delta)$. Let
  \begin{eqnarray*}
    u^-
    & =
    & H(\pmb\alpha)(u^\ell),
    \\
    u^r
    & =
    & H(\pmb\beta'')\left(T\left(z^+, z^-,H(\pmb\beta')(u^-)\right)\right),
    \\
    u^r
    & =
    & H(\pmb\sigma'')\left(T\left(z^+, z^-,H(\pmb\sigma')(u^\ell)\right)\right),
  \end{eqnarray*}
  with $\pmb\alpha,\pmb\beta,\pmb\sigma\in\reali^n$ and
  $u^-\in B(\bar u;\delta)$. Then,
  \begin{equation}
    \label{eq:LemmaAGG7part1}
    \sum_{i=1}^n \modulo{\sigma_i-\alpha_i-\beta_i}
    =
    \O \left(
      \sum_{(i,j) \in \mathcal{A}_{\pmb{\alpha},\pmb{\beta}}}
      \modulo{\alpha_i \, \beta_j}
      +
      \norma{z^+-z^-} \, \sum_{i>i_o} \modulo{\alpha_i}
    \right),
  \end{equation}
  where $\mathcal A_{\pmb{\alpha},\pmb{\beta}}$, as above, denotes the
  set of approaching waves. Analogously, if
  \begin{eqnarray*}
    u^+
    & =
    & H(\pmb{\alpha''})\left(T\left(z^+,z^-,H(\pmb{\alpha'})(u^\ell)\right)\right),
    \\
    u^r
    & =
    & H(\pmb{\beta})(u^+),
    \\
    u^r
    & =
    & H(\pmb\sigma'')\left(T\left(z^+, z^-,H(\pmb\sigma')(u^\ell)\right)\right),
  \end{eqnarray*}
  then,
  \begin{equation}
    \label{eq:LemmaAGG7part2}
    \sum_{i=1}^n \modulo{\sigma_i-\alpha_i-\beta_i}
    =
    \O \left(
      \sum_{(i,j) \in \mathcal{A}_{\pmb{\alpha},\pmb{\beta}}}
      \modulo{\alpha_i\beta_j}
      +
      \norma{z^+-z^-} \sum_{i<i_o} \modulo{\beta_i}
    \right).
  \end{equation}
\end{lemma}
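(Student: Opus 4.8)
The plan is to reduce both assertions to the classical wave interaction estimate for standard Riemann problems, namely~\cite[Lemma~5]{AmadoriGosseGuerra2002} (quoted here as a building block alongside Lemma~\ref{lem:AGG5}), combined with the commutation estimate of Lemma~\ref{lem:AGG6} which controls the defect in swapping the junction map $T$ past a Lax-curve composition. The two assertions~\eqref{eq:LemmaAGG7part1} and~\eqref{eq:LemmaAGG7part2} are symmetric: the first treats an interaction occurring to the left of the junction (so $\pmb\alpha$ carries the incoming waves that must be pushed through $T$), the second an interaction to the right (so $\pmb\beta$ is already on the correct side). I will carry out the first in detail; the second follows by the mirror argument, replacing ``$i>i_o$'' approaching-type bookkeeping by ``$i<i_o$'' and swapping the roles of $\pmb\alpha$ and $\pmb\beta$.

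First I would fix the strategy of decomposing the path from $u^\ell$ to $u^r$ in two ways and comparing. In the hypothesis of part~1 we have $u^r = H(\pmb\beta'')\bigl(T(z^+,z^-,H(\pmb\beta')(H(\pmb\alpha)(u^\ell)))\bigr)$, while the reference configuration is $u^r = H(\pmb\sigma'')\bigl(T(z^+,z^-,H(\pmb\sigma')(u^\ell))\bigr)$. The idea is to transport the outer block $H(\pmb\alpha)$ through the junction: write $H(\pmb\beta')(H(\pmb\alpha)(u^\ell))$ and apply $T$. By Lemma~\ref{lem:AGG6}, $T(z^+,z^-,H(\pmb\gamma)(w)) = H(\pmb\gamma)(T(z^+,z^-,w)) + \O\,\norma{\pmb\gamma}\,\norma{z^+-z^-}$ for any small $\pmb\gamma$, $w$. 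Using this with $\pmb\gamma$ the concatenation of $\pmb\beta'$ (or rather the combined left-moving waves, regrouped via the standard Glimm-type curve manipulations) and absorbing the part of $\pmb\alpha$ with index $\le i_o$ into the left Riemann problem, one reduces the left-hand side of~\eqref{eq:LemmaAGG7part1} to: (i) a purely conservative interaction term among the waves $\pmb\alpha$, $\pmb\beta$, estimated by $\sum_{(i,j)\in\mathcal A_{\pmb\alpha,\pmb\beta}}\modulo{\alpha_i\beta_j}$ via~\cite[Lemma~5]{AmadoriGosseGuerra2002} and Lemma~\ref{lem:AGG5}; plus (ii) the commutation defect from Lemma~\ref{lem:AGG6}, which is $\O\,\norma{z^+-z^-}$ times the total size of the waves that had to cross the junction, i.e.\ precisely the forward waves $\sum_{i>i_o}\modulo{\alpha_i}$ (the waves with $i\le i_o$ never reach the junction and contribute only to the conservative term).

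The main obstacle I anticipate is the careful bookkeeping in step (ii): one must verify that only the waves of $\pmb\alpha$ with index $i>i_o$ genuinely interact with the junction, while those with $i\le i_o$ propagate away from it and can be matched to a standard Riemann problem on the left half-plane with no junction-induced error. This requires invoking the structure of the Generalized Riemann Problem solution from \S~\ref{sub:GRP}—in particular that the solution splits as the glueing of two standard Lax solutions across $x=0$, with the $i\le i_o$ families on the left and the $i\ge i_o+1$ families on the right—and then applying Lemma~\ref{lem:AGG5} to re-express each change of base point along the Lax curves, picking up only quadratic (approaching) errors. Once the correct grouping is identified, the estimate is assembled by the triangle inequality from finitely many applications of Lemmas~\ref{lem:ExistenceT}, \ref{lem:AGG5} and~\ref{lem:AGG6}, each contributing a term of one of the two admissible types. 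The constant in $\O$ depends only on $f$ and $\Xi$ through the Lipschitz constants furnished by those lemmas, and the smallness of $\delta$ is chosen so that all intermediate states remain in the neighborhoods where those lemmas apply.
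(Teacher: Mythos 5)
Your plan correctly identifies the two ingredients the paper actually uses — the classical interaction estimate (Lemma~\ref{lem:AGG5}) for the quadratic approaching-wave term and Lemma~\ref{lem:AGG6} for the commutation defect — and your final description of the result, that the junction penalty is $\O\,\norma{z^+-z^-}\sum_{i>i_o}\modulo{\alpha_i}$ coming from the forward $\alpha$-waves that must cross $T$, is correct. But the step you write down to get there does not produce that bound. You propose to apply the commutation lemma with $\pmb\gamma$ being ``the concatenation of $\pmb\beta'$'' or ``the combined left-moving waves.'' If you commute $T$ past the left-moving waves $\pmb\beta'$ (or $\pmb\alpha'$), Lemma~\ref{lem:AGG6} gives a defect $\O\,\norma{\pmb\beta'}\,\norma{z^+-z^-}$, i.e.~a bound involving $\sum_{i\le i_o}\modulo{\beta_i}$ — which is neither what the lemma claims in part~1 nor needed. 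The correct thing to do is the opposite: leave $\pmb\alpha'$ and $\pmb\beta'$ on the left of $T$ untouched (they never reach the junction), and commute $T$ past the \emph{right-moving} block $\pmb\alpha''$ of $\pmb\alpha$. Concretely, the paper compares $u_1 = H(\pmb\beta'')\circ H(\pmb\alpha'')\bigl(T(z^+,z^-,H(\pmb\alpha')(u^\ell))\bigr)$ with $u_2 = H(\pmb\beta'')\bigl(T(z^+,z^-,H(\pmb\alpha)(u^\ell))\bigr)$; since $H(\pmb\alpha)=H(\pmb\alpha'')\circ H(\pmb\alpha')$, Lemma~\ref{lem:AGG6} applied at $H(\pmb\alpha')(u^\ell)$ with wave $\pmb\alpha''$ gives exactly $\norma{u_1-u_2}=\O\,\norma{\pmb\alpha''}\,\norma{z^+-z^-}$, whence $\sum_{i>i_o}\modulo{\alpha_i}$.

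A second, smaller gap: your proposal stops at an estimate on the \emph{states} but never explains how to pass to an estimate on the \emph{wave sizes} $\sum_i\modulo{\sigma_i-\alpha_i-\beta_i}$. The paper's device is to introduce the auxiliary state $\tilde u = H(\pmb{\alpha''}+\pmb{\beta''})\bigl(T(z^+,z^-,H(\pmb{\alpha'}+\pmb{\beta'})(u^\ell))\bigr)$, whose associated wave vector is $\pmb\alpha+\pmb\beta$ by construction, and then to use the Lipschitz continuity of the solution map $E$ from~\eqref{eq:39}: $\norma{\pmb\sigma-(\pmb\alpha+\pmb\beta)}=\norma{E(z^+,z^-,u^r,u^\ell)-E(z^+,z^-,\tilde u,u^\ell)}=\O\,\norma{u^r-\tilde u}$. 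Without this device (or an equivalent one), ``reduces the left-hand side to'' a sum of two state estimates is not a valid inference. The remaining triangle-inequality split $\norma{u^r-\tilde u}\le\norma{u^r-\tilde u+u_1-u_2}+\norma{u_1-u_2}$, the second-order vanishing argument for the first piece, and the commutation lemma for the second, then match the shape of the argument you sketched.
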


\begin{proof}
  It is sufficient to prove~\eqref{eq:LemmaAGG7part1},
  since~\eqref{eq:LemmaAGG7part2} is proved analogously. We set
  \begin{eqnarray*}
    \tilde{u}
    & =
    & H(\pmb{\alpha''} + \pmb{\beta''})
      \left(
      T \left(z^+, z^-,H(\pmb{\alpha'} + \pmb{\beta'})(u^\ell) \right)
      \right),
    \\
    u_1
    & =
    & H(\pmb\beta'') \circ H(\pmb\alpha'')
      \left(T\left(z^+, z^-,H(\pmb\alpha')(u^\ell)\right)\right),
    \\
    u_2
    & =
    & H(\pmb\beta'')\left(T\left(z^+, z^-,H(\pmb\alpha)(u^\ell)\right)\right).
  \end{eqnarray*}
  By the Lipschitz continuity of $E$, we obtain
  \begin{eqnarray*}
    \norma{\pmb\sigma-(\pmb\alpha+\pmb\beta)}
    & =
    & \norma{E(z^+, z^-,u^r,u^\ell)-E(z^+, z^-,\tilde u,u^\ell)}
    \\
    & =
    & \O \, \norma{u^r - \tilde u}
    \\
    & =
    & \O \left(
      \norma{u^r - \tilde u  + u_1 - u_2}
      +
      \norma{u_1 - u_2}
      \right).
  \end{eqnarray*}
  To estimate the first term we consider the function
  $u^r - \tilde u + u_1 - u_2$ which is $\C2$
  w.r.t.~$\pmb{\alpha}, \pmb{\beta}$. Moreover, we assume that there
  are no approaching waves and obtain
  \begin{displaymath}
    \pmb{\alpha} = (\alpha_1,\dots,\alpha_i,0,\dots,0)\,,\quad
    \pmb{\beta} = (0,\dots,0,\beta_j,\dots,\beta_n)\,,\quad i\le j.
  \end{displaymath}
  The case $i=j$ is given in the case of two rarefaction waves and
  $\alpha_i,\beta_i\ge 0$. If $i\le i_o$, then $\pmb\alpha''=0$,
  $H(\pmb{\alpha'} + \pmb{\beta'})(u) = H(\pmb{\beta'}) \circ
  H(\pmb{\alpha'})(u)$, whence $u^r = \tilde{u}$ and $u_1 = u_2$.

  If $i > i_o$, then $\pmb\beta'=0$ and
  $H(\pmb{\alpha''} + \pmb{\beta''})(u) = H(\pmb{\beta''}) \circ
  H(\pmb{\alpha''})(u)$, whence $u^r = u_2$ and $\tilde{u} = u_1$.

  In all cases we get $u_0^+ - \tilde u^+ +u_1-u_2=0$. Standard
  considerations (see e.g.~\cite[\S~7.3]{Bressan2000},
  \cite[\S~13.3]{Dafermos2000} or~\cite{Yong1999}) and
  Lemma~\ref{lem:ExistenceT} lead to
  \begin{equation*}
    \norma{u^r - \tilde{u} + u_1 - u_2}
    =
    \O \sum_{(i,j) \in \mathcal{A}_{\pmb{\alpha}.\pmb{\beta}}}
    \modulo{\alpha_i \, \beta_j}
  \end{equation*}
  in the general case.

  Concerning $\norma{u_1-u_2}$, we get
  \begin{displaymath}
    \norma{u_1-u_2}
    \le
    \O \, \norma{H(\pmb{\alpha''}) \left(
        T(z^+, z^-,H(\pmb\alpha')(u^\ell))
      \right) -
      T(z^+, z^-,H(\pmb{\alpha})(u^\ell))} \,.
  \end{displaymath}
  The equality
  $H(\pmb\alpha)(u) = H(\pmb\alpha'')\circ H(\pmb\alpha')(u)$ and
  Lemma~\ref{lem:AGG6} with $u = H(\pmb{\alpha'})(u^\ell)$ lead to
  \begin{displaymath}
    \norma{u_1-u_2}
    =
    \O \; \norma{z^+ - z^-} \sum_{i>i_o} \; \modulo{\alpha_i} \,.
  \end{displaymath}
  The result follows.
\end{proof}

Lemma~\ref{lem:AGG7} suggests that the quantity $\norma{z^+-z^-}$ is a
convenient way to measure the strength of the zero--waves associated
to the coupling condition. More precisely, we define the strength of
the zero--wave at a junction with parameters
$z^+, z^- \in \mathcal{Z}$ as $\sigma = \norma{z^+-z^-}$.

\subsubsection*{Wave-front tracking approximate solutions}

We adapt the wave-front tracking techniques
from~\cite{AmadoriGosseGuerra2002, Bressan2000, ColomboMarcellini2010,
  GuerraMarcelliniSchleper2009} to construct a sequence of approximate
solutions to the Cauchy problem~\eqref{eq:21} and prove uniform
$\BV$-estimates in space. The approximate solutions converge towards a
solution to the Cauchy problem with finitely many junctions. First, we
define the approximations.

\begin{definition}
  \label{def:WFTApproxSol}
  Let $\zeta \in \BV (\reali; \mathcal{Z})$ be piecewise constant.
  For $\epsilon>0$, a continuous map
  \begin{equation*}
    u^\epsilon \colon \left[0, +\infty\right[ \to \Lloc1(\reali; \reali^n)
  \end{equation*}
  is an $\epsilon$-approximate solution to~\eqref{eq:21} if the
  following conditions hold:
  \begin{itemize}
  \item $u^\epsilon$ as a function of $(t,x)$ is piecewise constant
    with discontinuities along finitely many straight lines in the
    $(t,x)$-plane. There are only finitely many wave-front
    interactions and at most two waves interact with each other. There
    are four types of discontinuities: shocks (or contact
    discontinuities), rarefaction waves, non--physical waves and
    zero--waves. We distinguish these waves' indexes in the sets
    $\mathcal J =\mathcal{S} \cup \mathcal{R} \cup
    \mathcal{N}\mathcal{P} \cup \mathcal{Z}\mathcal{W}$, the generic
    index in $\mathcal{J}$ being $\alpha$.
  \item At a shock (or contact discontinuity) $x_\alpha=x_\alpha(t)$,
    $\alpha\in\mathcal S$, the traces $u^+=u^\epsilon (t,x_\alpha+)$
    and $u^-=u^\epsilon (t,x_\alpha-)$ are related by
    $u^+=H_{i_\alpha}(\sigma_\alpha)(u^-)$ for $1\le i_\alpha\le n$
    and wave-strength $\sigma_\alpha$. If the $i_\alpha$-th family is
    genuinely nonlinear, the Lax entropy condition $\sigma_\alpha<0$
    holds and
    \begin{equation*}
      \modulo{\dot x_\alpha -\lambda_{i_\alpha}(u^+,u^-)}
      \le \epsilon,
    \end{equation*}
    where $\lambda_{i_\alpha}(u^+,u^-)$ is the wave speed described by
    the Rankine-Hugoniot conditions.
  \item For a rarefaction wave $x_\alpha = x_\alpha(t)$,
    $\alpha\in\mathcal R$ the traces are related by
    $u^+ = H_{i_\alpha}(\sigma_\alpha)(u^-)$ for a genuinely nonlinear
    family $1 \le i_\alpha \le n$ and wave-strength
    $0 < \sigma_\alpha \le \epsilon$. Moreover,
    \begin{equation*}
      \modulo{\dot x_\alpha -\lambda_{i_\alpha}(u^+)} \le \epsilon.
    \end{equation*}
  \item All non--physical fronts $x = x_\alpha(t)$,
    $\alpha \in \mathcal{NP}$ travel at the same speed
    $\dot x_\alpha = \hat\lambda$ with
    $\hat\lambda > \sup_{u,i} \modulo{\lambda_i (u)}$. The total
    strength of all non--physical fronts is uniformly bounded by
    \begin{equation*}
      \sum_{\alpha\in\mathcal{NP}}
      \norma{u^\epsilon(t,x_\alpha +)-u^\epsilon(t,x_\alpha -)}
      \le
      \epsilon
      \quad \mbox{for all } t > 0 \,.
    \end{equation*}
  \item Zero--waves are located at the junctions
    $x_\alpha \in \mathcal{I}$. At a zero--wave $x_\alpha$,
    $\alpha \in \mathcal{ZW}$, the traces are related by the coupling
    condition
    $u^+=T\left(\zeta (x_\alpha+), \zeta (x_\alpha-),u^-\right)$ for
    all $t>0$ except at the interaction times.
  \item The initial data satisfies
    $\norma{u^\epsilon(0,\cdot)-u_o}_{\L1(\reali;\reali^n)} \le
    \epsilon$.
  \end{itemize}
\end{definition}

\noindent Next, we prove the existence of $\epsilon$--approximate
solutions.

\begin{theorem}
  \label{thm:ExistenceEpsilonApprox}
  Let $\Omega \subseteq \reali^n$ be open, $f$
  satisfy~\ref{it:f1}--\ref{it:f3} and $\Xi$
  satisfy~\ref{eq:Xi1}--\ref{eq:Xi3}. Fix $\bar u \in A_{i_o}$ and
  $\bar z \in \mathcal{Z}$. Then, there exist $\delta > 0$ such that
  for all piecewise constant $\zeta \in \BV (\reali; \mathcal{Z})$
  with
  \begin{displaymath}
    \zeta (\reali) \subseteq B (\bar z; \delta) \quad \mbox{ and } \quad
    \tv (\zeta) < \delta
  \end{displaymath}
  and for all initial data $u_o $ with
  \begin{equation*}
    u_o (\reali) \subseteq B (\bar u; \delta) \, , \quad
    \tv(u_o) < \delta,
  \end{equation*}
  for every $\epsilon$ sufficiently small there exists an
  $\epsilon$--approximate solution to~\eqref{eq:21} in the sense of
  Definition~\ref{def:WFTApproxSol}. Moreover, the total variation in
  space $\tv(u^\epsilon(t,\cdot))$ and the total variation in time
  $\tv(u^\epsilon(\cdot,x))$ , $x \neq x_\alpha$,
  $\alpha \in \mathcal{Z}\mathcal{W}$ are bounded uniformly for
  $\epsilon$ sufficiently small and for every piecewise constant
  $\zeta$ with $\tv (\zeta) < \delta$.
\end{theorem}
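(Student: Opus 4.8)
The plan is to construct $u^\epsilon$ by the standard wave-front tracking algorithm adapted to the presence of zero--waves at the finitely many junctions in $\mathcal{I}$, and then to prove uniform BV bounds via a Glimm-type functional. First I would fix $\delta$ small enough that Lemma~\ref{lem:ExistenceT} applies uniformly on $B(\bar z;\delta)^2\times B(\bar u;\delta)$ and that all the interaction estimates of Lemmas~\ref{lem:AGG4}--\ref{lem:AGG7} hold; I also shrink $\delta$ so that the solution stays in $A_{i_o}$ and hence the $T$ map is well defined along the construction. The initialisation step: approximate $u_o$ by a piecewise constant function $u^\epsilon(0,\cdot)$ with $\tv(u^\epsilon(0,\cdot))\le\tv(u_o)$ and $\L1$--distance $\le\epsilon$, having finitely many jumps, placed so that no jump coincides with a point of $\mathcal{I}$. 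At $t=0+$ I solve a classical Riemann problem at each jump of $u^\epsilon(0,\cdot)$ not at a junction, and a Generalized Riemann Problem (Section~\ref{sub:GRP}, map $E$ of~\eqref{eq:39}) at each junction point, with rarefactions split into fans of strength $\le\epsilon$. Wave interactions are resolved in the usual way: classical Riemann solvers at wave–wave collisions away from $\mathcal{I}$, with a non–physical wave introduced when the number of outgoing fronts would otherwise blow up (Accurate/Simplified Riemann Solver à la~\cite[\S~7.2]{Bressan2000}); when a front hits a junction $x_\alpha\in\mathcal{I}$ I re-solve the Generalized Riemann Problem there via $E$. A standard argument shows that, for $\epsilon$ small, only finitely many interactions occur and at most two fronts meet at a time, by perturbing front slopes and junction abscissae to be rationally independent.

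For the uniform bounds I would introduce the functional
\begin{displaymath}
  \Upsilon(t) = V(t) + C_0\, Q(t) + C_1 \sum_{\substack{x_\alpha\in\mathcal{I}\\ x_\alpha > \text{(fronts)}}} \norma{\zeta(x_\alpha+)-\zeta(x_\alpha-)} ,
\end{displaymath}
where $V(t)=\sum_{\alpha\in\mathcal J}\modulo{\sigma_\alpha}$ is the total strength of the genuine fronts (the zero--waves being assigned strength $\norma{\zeta(x_\alpha+)-\zeta(x_\alpha-)}$ as discussed after Lemma~\ref{lem:AGG7}), $Q(t)=\sum_{(\alpha,\beta)\ \text{approaching}}\modulo{\sigma_\alpha\sigma_\beta}$ is the Glimm interaction potential (counting each classical front as approaching the zero--waves on the appropriate side, consistently with the notion of approaching waves and the $i>i_o$ / $i<i_o$ sums in Lemma~\ref{lem:AGG7}), and the last term is the fixed, $t$--independent total strength of the not–yet–crossed zero--waves. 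Using Lemma~\ref{lem:AGG7} at a front–junction interaction and the classical Glimm estimates at front–front interactions, one checks that $\Upsilon$ is non-increasing across every interaction provided $C_0,C_1$ are chosen large (depending only on $\O$) and $\delta$ is small enough that $C_0\,\delta<1$: the new $\O\bigl(\sum_{(i,j)\ \text{appr.}}\modulo{\alpha_i\beta_j}\bigr)$ term is absorbed by the decrease of $Q$, and the $\O\bigl(\norma{z^+-z^-}\sum_{i>i_o}\modulo{\alpha_i}\bigr)$ term is absorbed by the decrease of the last summand of $\Upsilon$, since a zero--wave is crossed exactly once. Hence $V(t)\le\Upsilon(t)\le\Upsilon(0+)=\O\bigl(\tv(u_o)+\tv(\zeta)\bigr)<\O\,\delta$, and Lemma~\ref{lem:AGG4} converts this into a uniform bound on $\tv(u^\epsilon(t,\cdot))$; the bound on $\tv(u^\epsilon(\cdot,x))$ for $x$ not at a junction follows from the finite propagation speed and the uniform bound on $V$, as in~\cite[\S~7.3]{Bressan2000}. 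The uniform smallness also guarantees $u^\epsilon(t,x)\in B(\bar u;\delta)\subset A_{i_o}$, so the construction never breaks down, and the bounds are uniform in $\zeta$ as claimed.

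The main obstacle is bookkeeping the zero--waves coherently within the Glimm functional: one must show that a classical front approaching a junction produces, after crossing, only a controlled increase in total strength charged against the one-time budget $\norma{\zeta(x_\alpha+)-\zeta(x_\alpha-)}$, and that reflected/transmitted waves from the junction do not create a feedback loop (which is why the zero--wave strength is \emph{fixed} and decreasing in $\Upsilon$ only through being crossed, never increasing). A secondary technical point is ensuring global-in-time existence of the front-tracking approximation, i.e. ruling out accumulation of interaction times; this is handled by the standard device of stopping generation of rarefaction fronts after a threshold and by the non–physical-wave mechanism keeping the number of fronts finite, exactly as in the classical theory, with the only new feature being that each of the finitely many junctions is hit at most finitely often because $V$ stays bounded and each front has speed bounded away from $\hat\lambda$.
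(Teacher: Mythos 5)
Your construction of the $\epsilon$--approximate solution (discretization of rarefactions, accurate/simplified Riemann solvers away from junctions, Generalized Riemann Problem at junctions, speed perturbation to avoid triple collisions) matches the paper's and is correct. The gap is in the stability argument, specifically in the extra third term you add to the Glimm functional and the justification ``since a zero--wave is crossed exactly once.''

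That statement is false. A zero--wave sits at a \emph{fixed} abscissa $x_\alpha\in\mathcal{I}$ and is crossed by every physical and non--physical front that ever passes through that location; over the lifetime of the wave-front tracking approximation, a single junction is traversed arbitrarily many times. Consequently, a one-shot budget of size $\norma{\zeta(x_\alpha+)-\zeta(x_\alpha-)}$ attached to each junction cannot absorb the cumulative $\O\bigl(\norma{z^+-z^-}\sum_{i>i_o}\modulo{\alpha_i}\bigr)$ contributions from Lemma~\ref{lem:AGG7}. Moreover, you describe that third term as ``fixed, $t$--independent,'' which directly contradicts the claim that it decreases at interactions, so the bookkeeping is internally inconsistent. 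The correct mechanism, which the paper uses and which you actually already set up when you declare that the zero--waves enter $Q$ as waves of strength $\norma{z^+-z^-}$ approaching the classical fronts on the appropriate side, is that the term $\norma{z^+-z^-}\sum_{i>i_o}\modulo{\alpha_i}$ is itself a quadratic interaction term: when a front $\alpha_i$ crosses the junction the approaching pair (front, zero--wave) is removed from $Q$, yielding a decrease of exactly $\modulo{\alpha_i}\,\norma{z^+-z^-}$, which is what covers the new waves generated. No separate budget term is needed, and adding one without a correct decreasing mechanism would make the proof fail. Dropping your third summand and relying solely on $\Upsilon=V+C\,Q$ with zero--waves counted in both $V$ and the approaching pairs of $Q$, exactly as stated in~\eqref{Terhi}--\eqref{def_upsilon}, recovers the paper's argument.

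Two minor further points: the perturbations used to avoid triple interactions should be applied to the speeds of shocks, contact discontinuities and rarefactions only (the junction abscissae are fixed by $\zeta$ and cannot be moved), and the bound on $\tv(u^\epsilon(\cdot,x))$ away from $\mathcal{I}$ in fact needs the uniform bound on the total interaction strength (which $\Upsilon(0)$ controls) rather than just finite speed plus a bound on $V(t)$.
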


\begin{proof}
  \textit{Description of the wave front tracking algorithm.}  For
  notational convenience, we drop the $\epsilon$. Let $\epsilon$ and
  $\tv (\zeta)$ be sufficiently small, then we construct the
  approximate solution in the following way:
  \begin{itemize}
  \item To obtain piecewise constant approximate solutions, we
    discretize the rarefactions as in~\cite{Bressan2000}. For a fixed
    small parameter $\delta_R$, each rarefaction of size $\sigma$ is
    divided into $m = [[{\sigma}/{\delta_R}]]+1$ wave-fronts, each one
    with size $\sigma/m\leq \delta_R$.
  \item Given initial data $u_o$, we can define a piecewise constant
    approximation $u(0,\cdot)$ satisfying the requirements of
    Definition~\ref{def:WFTApproxSol} and
    \begin{displaymath}
      \tv \left(u(0,\cdot)\right) \leq \tv(u_o) \,.
    \end{displaymath}
    For small $t$, $u(t,x)$ is constructed by solving the generalized
    Riemann problem at every point $x_\alpha$ with
    $\alpha \in \mathcal{ZW}$ and by solving the homogeneous Riemann
    Problem at every remaining discontinuity in $u(0,\cdot)$.
  \item At every interaction point, a new Riemann Problem
    arises. Notice that because of their fixed speed, two
    non--physical fronts cannot interact with each other, neither can
    the zero--waves. Moreover, by a slight modification of the speed
    of some waves (only among shocks, contact discontinuities and
    rarefactions), it is possible to achieve the property that not
    more than two wave-fronts interact at a point.
  \end{itemize}
  After each interaction time, the number of wave-fronts may
  increase. In order to prevent this number to become infinite in
  finite time, a specific treatment has been proposed for waves whose
  strength is below a threshold value $\rho$ by means of a simplified
  Riemann solver~\cite[\S~7.2]{Bressan2000}.

  Suppose that two wave--fronts of strengths $\sigma$, $\sigma'$
  interact at a given point $(t,x)$. If $x \not= x_\alpha$,
  $\alpha \in \mathcal{ZW}$, we use the classical accurate or
  simplified homogeneous Riemann solver as
  in~\cite[\S~7.2]{Bressan2000}. Assume now that $x = x_\alpha$,
  $\alpha \in \mathcal{ZW}$. We briefly recall the different
  situations that can occur, see~\cite{AmadoriGosseGuerra2002} for
  more details.
  \begin{itemize}
  \item If the wave approaching the zero wave is physical and
    $|\sigma \, \sigma'| \ge \rho$ we use the (accurate) generalized
    Riemann solver.
  \item If the wave approaching the zero wave is physical and
    $\modulo{\sigma \,\sigma'} < \rho$, we use a simplified Riemann
    solver. Assume that the wave-front on the right is the
    zero--wave. Let $u_l$, $u_m = H_i(\sigma)(u_l)$,
    $u_r = T(\zeta (x_\alpha+), \zeta (x_\alpha-), u_m)$ be the states
    before the interaction. We define the auxiliary states
    \begin{equation*}
      \tilde{u}_m
      =
      T(\zeta (x_\alpha+), \zeta (x_\alpha-), u_l) \,,
      \qquad
      \tilde{u}_r = H_i(\sigma)(\tilde{u}_m) \,.
    \end{equation*}
    Then, three fronts propagate after the interaction: the zero--wave
    $(u_l, \tilde{u}_m)$, the physical front
    $(\tilde{u}_m, \tilde{u}_r)$ and the non--physical one
    $(\tilde{u}_r,u_r)$.  Due to the commutation defect, we use
    Lemma~\ref{lem:AGG6} to ensure that the introduced error, i.e.~the
    size of the generated non--physical wave, is of second order.
  \item Suppose now that the wave on the left belongs to
    $\mathcal{NP}$. Again we use a simplified solver: let
    $u_l,\ u_m,\ u_r = T(\zeta (x_\alpha+), \zeta (x_\alpha-), u_m)$
    be the states before the interaction and define the new state
    $\tilde{u}_l = T(\zeta (x_\alpha+), \zeta (x_\alpha-),
    u_l)$. After the interaction time, only two fronts propagate: the
    zero--wave $(u_l, \tilde{u}_l)$ and the non--physical wave
    $(\tilde{u}_l,u_r)$. Lemma~\ref{lem:ExistenceT} ensures that the
    error we made is quadratic.
  \end{itemize}

  \textit{Stability of the algorithm.}  We recall how junctions are
  taken care in~\cite{AmadoriGosseGuerra2002}, within the Glimm
  functionals~\cite{Glimm1965}:
  \begin{equation}
    \label{Terhi}
    V(t)
    = \sum_{\alpha \in \mathcal{S \cup R \cup NP \cup ZW}} |\sigma_\alpha|
    \,,\qquad
    Q(t) =  \sum_{\alpha,\beta \in \tilde{\mathcal A}}
    |\sigma_\alpha\sigma_\beta|,
  \end{equation}
  measuring respectively the total wave strengths and the interaction
  potential in $u(t,\cdot)$. Remember that if $\alpha\in\mathcal{ZW}$
  then the strength of the wave located in $x_\alpha$ is given by
  $\sigma_\alpha = \norma{\zeta (x_\alpha+) -\zeta (x_a-)}$.  Notice
  that there exists a constant $C > 1$ (see Lemma~\ref{lem:AGG4}) such
  that
  \begin{displaymath}
    {1\over C}
    \left(
      \tv(u(t,\cdot))
      +
      \tv (\zeta)
    \right)
    \leq
    V(t)
    \leq
    C \left(
      \tv(u(t,\cdot))
      +
      \tv (\zeta)
    \right) \,.
  \end{displaymath}
  Thus, according to the estimates in Lemma~\ref{lem:ExistenceT} and
  Lemma~\ref{lem:AGG7} and to the classical
  ones~\cite[Lemma~7.2]{Bressan2000}, at every time $\tau$ when two
  waves of strengths $\sigma, \sigma'$ interact, we get:
  \begin{eqnarray}
    \label{delta_V}
    V(\tau+) - V(\tau-)
    & \leq
    & C \, \modulo{\sigma\,\sigma'},
    \\
    \label{delta_Q}
    Q(\tau+) - Q(\tau-)
    & \leq
    & (C \, V(\tau-) - 1) \, \modulo{\sigma \, \sigma'}  \,.
  \end{eqnarray}
  Therefore, if $V$ is sufficiently small, \eqref{delta_Q} implies
  \begin{equation}
    \label{delta_Q2}
    Q(\tau+) -Q(\tau-) \le -\frac{1}{2} \, \modulo{\sigma \, \sigma'}\,.
  \end{equation}
  By~\eqref{delta_V} and~\eqref{delta_Q2} we can choose a constant $C$
  large enough and $\delta_* > 0$ so that~\eqref{delta_Q2} holds and
  the quantity
  \begin{equation}
    \label{def_upsilon}
    \Upsilon(t) = V(t) + C \, Q(t)
  \end{equation}
  decreases at every interaction time $\tau$ provided that $V(\tau-)$
  is sufficintly small. Thus, by standard
  arguments~\cite{AmadoriGosseGuerra2002}, choosing initial data $u_o$
  satisfying
  \begin{equation}
    \tv( u_o) + \tv(\zeta) \le \delta \,,
  \end{equation}
  ensures that the $\epsilon$--approximate solution satisfies for any
  $t\ge 0$,
  \begin{equation}
    \label{eq:ineq2}
    \tv(u(t,\cdot)) + \tv (\zeta)
    \le
    \delta_* \,.
  \end{equation}

  The same arguments used in~\cite{AmadoriGosseGuerra2002} allow to
  control the total number of wave fronts, that the maximal strength
  of a rarefaction wavelet is bounded by $\O\,\epsilon$, that the sum
  of the strengths of all $\mathcal{NP}$ waves is also bounded by
  $\O\,\epsilon$ and that $t \to \tv (u (t,x))$, for
  $x \not\in \mathcal{I}$, is bounded uniformly in $\epsilon$ and
  $\zeta$.
\end{proof}

\subsubsection*{Passing to the Limit $\pmb\epsilon \to 0$}

\begin{theorem}
  \label{thm:LimitEpsilonApprox}
  Let $\Omega \subseteq \reali^n$ be open, $f$
  satisfy~\ref{it:f1}--\ref{it:f3} and $\Xi$
  satisfy~\ref{eq:Xi1}--\ref{eq:Xi3}. Fix $\bar u \in A_{i_o}$ and
  $\bar z \in \mathcal{Z}$. Then, there exist $\delta > 0$ such that
  for all piecewise constant $\zeta \in \BV (\reali; \mathcal{Z})$
  with
  \begin{displaymath}
    \zeta (\reali) \subseteq B (\bar z; \delta) \quad \mbox{ and } \quad
    \tv (\zeta) < \delta
  \end{displaymath}
  and for all initial data $u_o $ with
  \begin{equation*}
    u_o (\reali) \subseteq B (\bar u; \delta) \, , \quad
    \tv(u_o) < \delta,
  \end{equation*}
  the Cauchy Problem~\eqref{eq:21} admits a solution $u$ in the sense
  of Definition~\ref{def:sol} enjoying the properties:
  \begin{enumerate}[label={\bf{(\arabic*)}}]
  \item \label{item:1} The maps $t \to \tv (u (t,\cdot))$ and
    $t \to \norma{(u (t,\cdot)}_{\L\infty (\reali; \reali^n)}$ are
    uniformly bounded and the map $x \to u (t,x)$ is left continuous,
    for all $t \geq 0$.
  \item\label{item:2} For all $x \in \reali$, the map $t \to u (t,x)$
    admits a representative $\tilde u_x$ such that $\tv (\tilde u_x)$
    is uniformly bounded.
  \item \label{item:3} For all $t \geq 0$,
    $u (t, \cdot) \in \Lloc1 (\reali; \reali^n)$ and the map
    $t \to u (t,\cdot)$ is $\L1 (\reali; \reali^n)$--Lipschitz
    continuous.
  \item \label{item:4} For all $T>0$ and for all open interval
    $J \subseteq \reali \setminus \mathcal{I}$, the map
    $x \to u (\cdot, x)$ is $\L1 ([0,T]; \reali^n)$--Lipschitz
    continuous, with a Lipschitz constant independent of $J$,
    $\mathcal{I}$ being the set of points of jump of $\zeta$.
  \end{enumerate}
\end{theorem}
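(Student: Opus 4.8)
The plan is to pass to the limit $\epsilon\to0$ along the $\epsilon$--approximate solutions $u^\epsilon$ provided by Theorem~\ref{thm:ExistenceEpsilonApprox}, using the uniform bounds stated there, and then to verify that the limit fulfils Definition~\ref{def:sol}; since $\zeta$ is here piecewise constant, the measure $\mu$ in~\eqref{eq:20} vanishes and Definition~\ref{def:sol} is equivalent to Definition~\ref{def:WeakSolutionOneJunction}. First I would collect the uniform estimates. By Theorem~\ref{thm:ExistenceEpsilonApprox}, for $\epsilon$ small one has $\tv(u^\epsilon(t,\cdot))\leq\delta_*$ and $\norma{u^\epsilon(t,\cdot)}_{\L\infty}\leq C$ for all $t$, and $\tv(u^\epsilon(\cdot,x))\leq C$ uniformly in $\epsilon$ and in $x\notin\mathcal{I}$; finite propagation speed gives a uniform $\L1$ Lipschitz bound in time, $\norma{u^\epsilon(t_2,\cdot)-u^\epsilon(t_1,\cdot)}_{\L1(\reali;\reali^n)}\leq L\,\modulo{t_2-t_1}$; and, by the standard properties of $\epsilon$--approximate solutions, on every strip $[0,T]\times J$ with $J\subseteq\reali\setminus\mathcal{I}$ one has $\partial_t u^\epsilon+\partial_x f(u^\epsilon)=\mathcal{E}^\epsilon$ with $\norma{\mathcal{E}^\epsilon}_{\mathcal{M}([0,T]\times J)}\to0$, while at each $x_i\in\mathcal{I}$ the zero--wave enforces $u^\epsilon(t,x_i+)=T(\zeta(x_i+),\zeta(x_i-),u^\epsilon(t,x_i-))$ away from the finitely many interaction times, hence, by Lemma~\ref{lem:ExistenceT}, $f(u^\epsilon(t,x_i+))-f(u^\epsilon(t,x_i-))=\Xi(\zeta(x_i+),\zeta(x_i-),u^\epsilon(t,x_i-))$ for a.e.~$t$.

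Then compactness: the spatial $\BV$ bound and the time Lipschitz bound, via Helly's theorem and a diagonal argument over $T\in\naturali$, yield a subsequence $u^{\epsilon_k}\to u_*$ in $\C0(\mathopen[0,+\infty\mathclose[;\Lloc1(\reali;\reali^n))$. Lower semicontinuity of $\tv$ and of $\norma{\cdot}_{\L\infty}$, together with the Lipschitz bound, carry over to $u_*$ and give properties~\ref{item:1} and~\ref{item:3}, with $u_*(t,\cdot)$ taken left continuous. For the traces I would first observe that, for $x_i\in\mathcal{I}$, the maps $t\mapsto u^\epsilon(t,x_i\pm)$ also have uniformly bounded variation, because $u^\epsilon(t,x_i-)=\lim_{x\uparrow x_i}u^\epsilon(t,x)$ pointwise in $t$ so that lower semicontinuity of $\tv$ transfers the bound valid off $\mathcal{I}$, and symmetrically from the right. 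A further refinement of the subsequence (Helly in $t$ at the countably many relevant abscissae, and using that $\int_a^b\!\int_0^T\modulo{u^{\epsilon_k}-u_*}\,\d t\,\d x\to0$ forces $u^{\epsilon_k}(\cdot,x)\to u_*(\cdot,x)$ in $\L1(\mathopen[0,T\mathclose])$ for a.e.~$x$) then produces $u^{\epsilon_k}(\cdot,x_i\pm)\to u_*(\cdot,x_i\pm)$ in $\Lloc1$ and a.e.~$t$, and shows that $t\mapsto u_*(t,x)$ admits, for every $x$, a representative of uniformly bounded variation, which is~\ref{item:2}. Passing to the limit in the conservation defect on each strip and in the junction identity above, using the continuity of $f$ and $\Xi$ and the $\L\infty$ bounds, proves that $u_*$ solves $\partial_t u_*+\partial_x f(u_*)=0$ weakly on $\reali\setminus\mathcal{I}$ and satisfies $f(u_*(t,x_i+))-f(u_*(t,x_i-))=\Xi(\zeta(x_i+),\zeta(x_i-),u_*(t,x_i-))$ for a.e.~$t$; by the equivalence recalled above, this means $u_*$ is a solution in the sense of Definition~\ref{def:sol}.

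It remains to establish~\ref{item:4}. Fix $T>0$ and an open $J\subseteq\reali\setminus\mathcal{I}$; it suffices to bound $\int_0^T\modulo{u_*(t,b)-u_*(t,a)}\,\d t$ for $a<b$ in $J$. Since $\bar u\in A_{i_o}$, for $\delta$ small $\det Df$ does not vanish on $\overline{B(\bar u;\delta)}$, so $f$ is there bi--Lipschitz and $\modulo{u_*(t,b)-u_*(t,a)}\leq C\,\modulo{f(u_*(t,b))-f(u_*(t,a))}$. For the approximation, the slicing theorem for $\BV$ functions of two variables gives, for a.e.~$a,b$, that $\int_0^T\modulo{f(u^\epsilon(t,b))-f(u^\epsilon(t,a))}\,\d t\leq\modulo{D_x f(u^\epsilon)}([0,T]\times[a,b])$, and from $\partial_x f(u^\epsilon)=-\partial_t u^\epsilon+\mathcal{E}^\epsilon$ on the strip, $\modulo{D_x f(u^\epsilon)}([0,T]\times[a,b])\leq\modulo{D_t u^\epsilon}([0,T]\times[a,b])+\norma{\mathcal{E}^\epsilon}_{\mathcal{M}}\leq\int_a^b\tv\big(u^\epsilon(\cdot,x)\big)\,\d x+o(1)\leq C(b-a)+o(1)$, where the last step uses the uniform time--variation bound, valid for $x\in J$. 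Letting $\epsilon_k\to0$ (first for a.e.~$a,b$, then for all of them by the resulting Lipschitz estimate) yields $\int_0^T\modulo{u_*(t,b)-u_*(t,a)}\,\d t\leq C(b-a)$ with a constant independent of $J$, which is~\ref{item:4}.

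The step I expect to be the main obstacle is the trace analysis at the junctions: one must extract a single subsequence along which the one--sided traces $u^{\epsilon_k}(\cdot,x_i\pm)$ converge and, above all, identify these limits with the one--sided traces of $u_*$ --- convergence in $\Lloc1$ in space carries no information about pointwise traces by itself, so the identification has to be squeezed out of the uniform bounds on the variation in time together with the left continuity convention. The conservation--defect computation off $\mathcal{I}$ and the slicing argument for~\ref{item:4} are, by comparison, routine.
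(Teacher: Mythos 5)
Your overall strategy --- extract a limit $u_*$ by Helly compactness from the $\epsilon$--approximate solutions and then pass to the limit in the weak formulation and in the coupling condition at the junctions --- coincides with the paper's. What you do genuinely differently is the proof of property~\ref{item:4}: you invert $f$ locally near $\bar u$ (legitimate since $0$ is not an eigenvalue of $Df$ on $A_{i_o}$), and then bound $\modulo{D_x f(u^\epsilon)}([0,T]\times[a,b])$ through the identity $\partial_x f(u^\epsilon)=-\partial_t u^\epsilon+\mathcal{E}^\epsilon$ together with the $\BV$ slicing inequality, thus reducing the space--Lipschitz estimate to the uniform time--variation bound of Theorem~\ref{thm:ExistenceEpsilonApprox}. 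The paper instead reads~\ref{item:4} for $u^\epsilon$ off the wave--front structure (waves have speeds bounded away from $0$, so each spends at most $(b-a)/\lambda_0$ time in $[a,b]$, whence $\modulo{D_x u^\epsilon}\le\lambda_0^{-1}\modulo{D_t u^\epsilon}$), passes the Lipschitz estimate to the limit for a.e.~$x,y$, and extends to all pairs by left continuity. Both routes are valid; yours packages the same geometric information through the PDE rather than through wave kinematics.

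Where I see a genuine issue is the sequencing and the proposed remedy for what you correctly flag as the main obstacle, namely identifying $\lim_k u^{\epsilon_k}(\cdot,x_i\pm)$ with the traces of $u_*$. You suggest this ``has to be squeezed out of the uniform bounds on the variation in time together with the left continuity convention.'' Time--variation bounds give only Helly compactness for the trace curves, not the identification of the limit, and left continuity of $x\mapsto u(t,x)$ at fixed $t$ is a pointwise statement which does not upgrade a.e.--in--$x$ $\L1([0,T])$ convergence to convergence at the measure--zero set $\mathcal I$. The ingredient that actually closes this gap is precisely property~\ref{item:4}, which you postpone to the end. Once $x\mapsto u^\epsilon(\cdot,x)$ and $x\mapsto u_*(\cdot,x)$ are both $\L1([0,T])$--Lipschitz on $\mathopen]y,x_i\mathclose[\subset\reali\setminus\mathcal{I}$ uniformly in $\epsilon$, one has
\begin{displaymath}
  \int_0^T\norma{u^{\epsilon_k}(t,x_i)-u_*(t,x_i)}\d{t}
  \leq
  \O\,\modulo{x_i-y}
  +
  \int_0^T\norma{u^{\epsilon_k}(t,y)-u_*(t,y)}\d{t}
\end{displaymath}
for any good $y<x_i$, and letting $y\to x_i^-$ yields convergence at $x_i$ itself --- this is exactly the paper's argument for convergence at \emph{every} $x$. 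So you should prove~\ref{item:4} before the trace identification, replace the ``time variation plus left continuity'' route by this two--sided Lipschitz comparison, and then~\ref{item:2} and the passage to the limit in the coupling condition go through as you describe. With that reordering your proof closes.
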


\begin{proof}
  For $\epsilon>0$ sufficiently small, fix an $\epsilon$--approximate
  solution $u^\epsilon$. By Theorem~\ref{thm:ExistenceEpsilonApprox},
  $u^\epsilon$
  satisfies~\ref{item:1}--\ref{item:2}--\ref{item:3}--\ref{item:4}. By
  Helly Theorem as extended in~\cite[\S~2.5]{Bressan2000}, there
  exists a map
  $u \colon \mathopen[0, +\infty\mathclose[ \times \reali \to
  \reali^n$ such that, up to a subsequence, $u^\epsilon (t, \cdot)$
  converges to $u (t, \cdot)$ in $\Lloc1 (\reali; \reali^n)$ for all
  $t \in \mathopen[0, +\infty\mathclose[$ and $u$
  satisfys~\ref{item:1}, \ref{item:3}.

  We now prove that $u$ satisfies~\ref{item:4}. By possibly passing to
  subsequences, we may assume that
  $u^\epsilon (\cdot, x) \to u (\cdot,x)$ for a.e.~$x \in \reali$ in
  $\L1 ([0,T]; \reali^n)$. Since $u^\epsilon$ satisfies~\ref{item:4},
  we may pass the Lipschitz continuity of $x \to u^\epsilon (\cdot,x)$
  to the limit $\epsilon \to 0$ for a.e.~$x \in \reali$. The limit $u$
  is left continuous in the space variable $x$ by~\ref{item:1}, hence
  $u$ satisfies~\ref{item:4}.

  We now prove that for all $x \in \reali$,
  $u^\epsilon (\cdot,x) \to u (\cdot,x)$ in $\L1 ([0,T];
  \reali^n)$. To this aim, fix an arbitrary $x \in \reali$ and $y < x$
  such that
  $\mathopen ]y,x\mathclose[ \subset \reali \setminus\mathcal{I}$ and
  $u^\epsilon (\cdot, y) \to u (\cdot, y)$ in $\L1 ([0,T];
  \reali^n)$. Both $x \to u^\epsilon (\cdot,x)$ and
  $x \to u (\cdot,x)$ are Lipschitz continuous, hence
  \begin{eqnarray*}
    \int_0^T \norma{u^\epsilon (t,x) - u (t,x)} \d{t}
    & \leq
    & \O \, \modulo{x-y}
      +
      \int_0^T \norma{u^\epsilon (t,y) - u (t,y)} \d{t} \,;
    \\
    \limsup_{\epsilon\to 0}
    \int_0^T \norma{u^\epsilon (t,x) - u (t,x)} \d{t}
    & \leq
    & \O \, \modulo{x-y} \,.
  \end{eqnarray*}
  Letting now $y \to x$ we obtained the desired convergence.

  Note that $\tv( u^\epsilon (\cdot, x))$ is bounded uniformly, so
  that $u (\cdot,x)$ admits a $\BV$ representative,
  proving~\ref{item:2}.

  Finally, we prove that $u$ solves~\eqref{eq:21}. Choose
  $\phi \in \Cc1 (\mathopen]0,T\mathclose[ \times \reali; \reali)$ and
  $K$ so that
  $\spt \phi \subseteq \mathopen]0,T\mathclose[ \times \mathopen]-K, K
  \mathclose[$. Then,
  \begin{equation*}
    \int_0^T \int_{-K}^K
    \left(u^\epsilon \, \partial_t \phi + f(u^\epsilon) \, \partial_x \phi\right)
    \d x \, \d t
    =
    \int_0^T \sum_{\alpha\in\mathcal J}
    e_{\epsilon,\alpha} (t)\, \phi(t,x_\alpha(t)) \d t,
  \end{equation*}
  where $e_{\epsilon,\alpha}(t)$ measures the error in the
  Rankine--Hugoniot conditions along the discontinuity supported on
  $ x = x_\alpha(t)$, $\alpha \in \mathcal {J}$, i.e.,
  \begin{equation*}
    e_{\epsilon,\alpha} (t)
    =
    \dot x_{\alpha} \left(u^\epsilon(t,x_\alpha(t)+)
      -
      u^\epsilon(t,x_\alpha(t))\right)
    -
    \left(f(u^\epsilon(t,x_\alpha(t)+)) - f(u^\epsilon(t,x_\alpha(t))) \right).
  \end{equation*}
  By Definition~\ref{def:WFTApproxSol} and standard estimates,
  \begin{equation*}
    \sum_{\alpha\in\mathcal{J}\backslash\mathcal{ZW}}|e_{\epsilon,\alpha}(t)|
    \leq
    \O \, \epsilon \,.
  \end{equation*}
  Since the coupling condition~\eqref{eq:DefPsiCoupling} holds along
  the zero--waves $\alpha\in\mathcal{ZW}$, we obtain
  \begin{equation}
    \label{eq:28}
    \norma{
      \int_0^T \!\!\! \int_{-K}^K \!
      \left(u^\epsilon \partial_t \phi+ f(u^\epsilon) \partial_x \phi\right)
      \d x\, \d t
      +
      \int_0^T \!\!\! \sum_{\alpha\in\mathcal{ZW}} \! \phi(t,x_\alpha) \,
      \Xi(\zeta (x_\alpha+), \zeta (x_\alpha), u^\epsilon(t,x_\alpha)) \d t
    }
    \leq
    C \, \epsilon .
  \end{equation}
  As $\epsilon \to 0$ the first integrand above converges to the
  integrand on the left hand side of~\eqref{eq:27}.
  Using~\ref{eq:Xi1} and the convergence
  $u^\epsilon (\cdot, x_\alpha) \to u (\cdot, x_\alpha)$ in
  $\L1 ([0,T];\reali^n)$, we prove the convergence of the second
  integrand in the left hand side of~\eqref{eq:28}, obtaining
  \begin{displaymath}
    \int_0^{+\infty} \!\!\! \int_{\reali} \!
    \left(u \, \partial_t \phi+ f(u) \, \partial_x \phi\right)
    \d x\, \d t
    +
    \int_0^{+\infty} \!\!\! \sum_{\alpha\in\mathcal{ZW}} \! \phi(t,x_\alpha) \,
    \Xi(\zeta (x_\alpha+), \zeta (x_\alpha), u(t,x_\alpha)) \d t
    =0 \,,
  \end{displaymath}
  completing the proof.
\end{proof}

\subsection{Convergence Towards a General $\zeta$}
\label{sub:GeneralConvergence}

\begin{proofof}{Theorem~\ref{thm:main}}
  The proof consists of different steps.

  \paragraph{Step~1: Approximation of $\zeta$.} Let
  $\zeta \in \BV (\reali; \mathcal{Z})$. Call $\mathcal{I}$ the,
  possibly infinite, set of points of jump in $\zeta$. Recall that
  $D\zeta$ is a finite measure. By Lusin
  Theorem~\cite[Theorem~2.24]{Rudin}, for any $h > 0$, there exists a
  $g^h \in \Cc0 (\reali; \reali^p)$ such that $\norma{g^h (x)} \leq 1$
  and
  \begin{equation}
    \label{eq:29}
    \norma{D\zeta}
    \left(
      \left\{x \in \reali \colon g^h (x) \neq v (x)\right\}
    \right) < h \,.
  \end{equation}
  Introduce points $\{ x_1, \ldots, x_{N_h-1}\} \in \reali$ such
  that\footnote{Everywhere, $\sharp A$ stands the (finite) cardinality
    of the set $A$.}:
  \begin{enumerate}[label={\bf{(\roman*)}}]
  \item \label{item:r1} $x_0 = -\infty$, $x_1 < -1/h$, $x_{i-1} < x_i$
    for $i = 2, \ldots, N_h-1$, $x_{N_h-1} > 1/h$ and
    $x_{N_h} = +\infty$.
  \item \label{item:r2}
    $\sum_{x \in \mathcal{I} \setminus \mathcal{I}^h} \left\|\Delta
      \zeta(x)\right\|< h$ for a suitable set of points
    $\mathcal{I}^h$ contained in $\{x_1, x_2, \ldots, x_{N_h -1}\}$.
  \item \label{item:r3} Whenever $x_i \in \mathcal{I}^h$,
    $\tv\left(\zeta, \left[x_{i-1}, x_i \right[\right) < h / (1+\sharp
    \mathcal{I}^h)$.
  \item \label{item:r4}
    $\tv \left(\zeta, \left]x_{i-1}, x_i\right[\right) < h$ for all
    $i = 1, \ldots, N_h$.
  \item \label{item:r6}
    $\norma{g^h \left(x'\right) - g^h \left(x''\right)} < h$ for
    $x',\,x''\in]x_{i-1},x_{i}[$, $i=1,\ldots,N_{h}$.

  \item \label{item:r7} $x_i - x_{i-1} \in \left]0, h\right[$ for all
    $i=2, \ldots, N_h-1$.
  \end{enumerate}
  Such points exist since all these properties are stable if further
  points are inserted.  We approximate $\zeta$ by means of the
  piecewise constant left continuous function
  \begin{equation}
    \label{eq:24}
    \zeta^h (x)
    =
    \zeta\left(-\infty\right)\caratt{]-\infty,x_{1}]}(x)+
    \sum_{i=2}^{N_h-1} \zeta \left(x_{i-1}+\right) \;
    \caratt{]x_{i-1}, x_i]} (x)
    +\zeta\left(x_{N_{h}-1}+\right)\caratt{]x_{N_{h}-1},+\infty [}(x).
  \end{equation}

  By Theorem~\ref{thm:LimitEpsilonApprox}, there exists a solution
  $u^h$ to the Cauchy Problem~\eqref{eq:21} with $\zeta$ substituted
  by $\zeta^h$ as in~\eqref{eq:24}. We prove that as $h\to 0$ the
  solution $u^h$ converges in $\Lloc1$ to a solution to~\eqref{eq:21},
  possibly up to a subsequence.

  \paragraph{Step~2: Select a Convergent Subsequence.} We claim that
  there exists a map $u$ and a convergent subsequence, which we keep
  denoting $u^h$, such that
  \begin{eqnarray}
    \label{eq:30}
    u^h (t,\cdot)
    & \to
    & u (t,\cdot)
      \mbox{ in } \Lloc1 (\reali; \reali^n)
      \mbox{ for all }t;
    \\
    \label{eq:31}
    u^h (\cdot, x)
    & \to
    & u (\cdot, x)
      \mbox{ in } \Lloc1 (\mathopen[0, +\infty\mathclose[; \reali^n)
      \mbox{ for all }x;
    \\
    \label{eq:32}
    t
    & \to
    & u (t, \cdot)
      \mbox{ is Lipschitz continuous in } \L1 (\reali; \reali^n);
    \\
    \label{eq:33}
    \tv\left(u (t, \cdot)\right)
    & \mbox{is}
    & \mbox{bounded uniformly in }t \,;
    \\
    \label{eq:40}
    x \to u (t,x)
    & \mbox{is}
    & \mbox{left continuous for all } t \geq 0 \,.
  \end{eqnarray}
  Indeed, by~\ref{item:1} and~\ref{item:3} in
  Theorem~\ref{thm:LimitEpsilonApprox}, we can apply Helly Theorem as
  presented in~\cite[\S~2.5]{Bressan2000} obtaining the existence of a
  map $u$ satisfying~\eqref{eq:30}, \eqref{eq:32}, \eqref{eq:33}
  and~\eqref{eq:40}.

  We are left with the convergence~\eqref{eq:31}. Introduce a point
  $y <x$ and all the points of jump $\bar x_0, \ldots, \bar x_{M+1}$
  (for a suitable $M\ge 0$) in $\zeta^h$ such that
  \begin{displaymath}
    -\infty \leq \bar x_0  < y \le \bar x_1 < \bar x_2
    < \cdots < \bar x_M < x \leq \bar x_{M+1} \leq +\infty \,.
  \end{displaymath}
  We now estimate
  \begin{eqnarray}
    \nonumber
    \tv\left(\zeta^h; \mathopen[y, x \mathclose[\right)
    & =
    &\sum_{i=1}^M \norma{\Delta\zeta^{h}\left(\bar x_{i}\right)}=
      \sum_{i=1}^M \norma{\zeta (\bar x_i+) - \zeta (\bar x_{i-1}+)}
    \\
    \nonumber
    & \leq
    & \norma{\zeta (\bar x_1-) - \zeta (\bar x_0+)}
      + \norma{\zeta (\bar x_1+) - \zeta (\bar x_1-)}
      + \sum_{i=2}^ M \norma{\zeta (\bar x_i+) - \zeta (\bar x_{i-1}+)}
    \\
    \nonumber
    & \leq
    & \tv\left(\zeta, \mathopen]\bar x_0, \bar x_1\mathclose[\right)
      + \tv \left(\zeta, \mathopen]y, x\mathclose[\right)
    \\
    \label{eq:34}
    & \leq
    & h + \tv \left(\zeta, \mathopen]y, x\mathclose[\right) \,.
  \end{eqnarray}
  where to get to the last line above we used~\ref{item:r4}.

  Fix a positive $T$. By the triangle inequality, \ref{item:4} in
  Theorem~\ref{thm:LimitEpsilonApprox}, inequality~\eqref{eq:34} and
  Lemma~\ref{lem:ExistenceT}, since
  $u^h (t, \bar x_i+) = T\left(\zeta^h (\bar x_i+), \zeta^h(\bar x_i),
    u^h (t, x_i) \right)$,
  \begin{eqnarray}
    \nonumber
    &
    & \int_0^T \norma{u^h (t, x) - u^h (t,y)} \d{t}
    \\
    \nonumber
    & \leq
    & \int_0^T \norma{u^h (t, x) - u^h (t,\bar x_M+)} \d{t}
      + \int_0^T \norma{u^h (t, \bar x_M+) - u^h (t,\bar x_M)} \d{t}
    \\
    \nonumber
    &
    & +
      \sum_{i=1}^{M-1} \left(
      \int_0^T \norma{u^h (t,\bar x_{i+1}) - u^h (t, \bar x_i+)} \d{t}
      +
      \int_0^T \norma{u^h (t,\bar x_i+) - u^h (t, \bar x_i)} \d{t}
      \right)
    \\
    \nonumber
    &
    & +
      \int_0^T \norma{u^h (t, \bar x_1) - u^h (t, y+)} \d{t}
      +
      \int_0^T \norma{u^h (t, y+) - u^h (t, y)} \d{t}
    \\
    \nonumber
    & \leq
    & \O \, \modulo{x - \bar x_M}
      + \O \, \norma{\Delta\zeta^{h} (\bar x_M)}
    \\
    \nonumber
    &
    & +
      \O \, \sum_{i=1}^{M-1}
      \left(\modulo{\bar x_{i+1} - \bar x_i}
      + \norma{\Delta \zeta^{h} (\bar x_i)}
      \right)
      + \O \, \modulo{\bar x_1 - y}
      + \O \, \norma{\Delta\zeta^{h} (y) }
    \\
    \nonumber
    & \leq
    & \O \left( \modulo{x-y} + \tv\left(\zeta^h, \mathopen[y, x\mathclose[\right)\right)
    \\
    \label{eq:35}
    & \leq
    & \O \left(
      \modulo{x-y} + h + \tv\left(\zeta, \mathopen[y, x\mathclose[\right)
      \right) \,.
  \end{eqnarray}
  Since $u^{h}$ converges to $u$ in
  $\Lloc1\left([0,+\infty[\times\mathbb{R},\mathbb{R}^{n}\right)$ too,
  possibly passing to a subsequence, we may assume that for
  a.e.~$x \in \reali$ we have $u^h (\cdot,x) \to u (\cdot,x)$ in
  $\Lloc1 (\mathopen[0, +\infty{\mathclose[; \reali^n})$. Pass to the
  limit $h \to 0$ in~\eqref{eq:35} and obtain that for
  a.e.~$x,y \in \reali$ with $y < x$,
  \begin{equation}
    \label{eq:41}
    \int_0^T \norma{u (t, x) - u (t,y)} \d{t}
    \leq
    \O \, \left(
      \modulo{x-y} + \tv\left(\zeta, \mathopen[y, x\mathclose[\right)
    \right) \,.
  \end{equation}
  By the left continuity of $x \to u (t, x)$ and of the right hand
  side of~\eqref{eq:41} (with respect to both $x$ and $y$), the
  inequality~\eqref{eq:41} holds for \emph{all} $x,y \in \reali$ with
  $y < x$.

  Fix now an arbitrary $x \in \reali$ and choose $y \in \reali$ with
  $y<x$ and such that $u^h (\cdot, y) \to u (\cdot, y)$. By the
  triangle inequality, \eqref{eq:35} and~\eqref{eq:41}, we have
  \begin{displaymath}
    \int_0^T \! \norma{u^h (t,x) - u (t,x)} \d{t}
    \leq
    \O \left(
    \modulo{x-y} + h
    + \tv \left(\zeta, \mathopen[y, x\mathclose[\right)
    \right)
    + \int_0^T \norma{u^h (t,y) - u (t,y)} \d{t} \, .
  \end{displaymath}
  Hence, for almost every $y<x$,
  \begin{displaymath}
    \limsup_{h \to 0} \int_0^T \norma{u^h (t,x) - u (t,x)} \d{t}
    \leq
    \O \left(
      \modulo{x-y}
      +
      \tv \left(\zeta, \mathopen[y, x\mathclose[\right)
    \right)
  \end{displaymath}
  which proves the convergence for every $x \in \reali$, since the
  latter right hand side vanishes as $y \to x^-$.

  \paragraph{Step~3: The Limit is a Solution.}
  Fix
  $\phi \in \Cc1 (\mathopen]0, +\infty\mathclose[ \times \reali;
  \reali)$ such that $\spt \phi \subseteq [0,T] \times [-K, K]$ for
  suitable $T, K >0$. Showing that the left hand side below vanishes
  in the limit $h \to 0$ completes the proof.
  \begin{eqnarray*}
    &
    &
      \left\|
      -\int_0^T \int_{-K}^K
      \left(
      u \, \partial_t \phi + f (u) \, \partial_x \phi
      \right)
      \d{t} \d{x}
      -
      \sum_{x \in \mathcal{I}\,,\, \modulo{x} \leq K}
      \int_0^T \Xi\left(\zeta (x+), \zeta (x), u (t,x)\right)
      \phi (t, x) \d{t}
      \right.
    \\
    &
    &
      \left.
      -
      \int_0^T \int_{-K}^K
      D_{v (x)}^+ \Xi\left(\zeta (x), \zeta (x), u (t,x)\right)
      \phi (t,x)
      \d{\norma{\mu}} (x )\d{t}
      \right\|
    \\
    & \leq
    & \mathcal{E}_1^h + \mathcal{E}_2^h + \mathcal{E}_3^h +
      \mathcal{E}_4^h + \mathcal{E}_5^h + \mathcal{E}_6^h +
      \mathcal{E}_7^h + \mathcal{E}_8^h + \mathcal{E}_9^h +
      \mathcal{E}_{10}^h\,.
  \end{eqnarray*}
  To this aim, consider the terms on the right hand side separately:

  \subparagraph{Term $\mathcal{E}_1^h$:} By the $\Lloc1$ convergence
  proved in Step~2.

  \begin{eqnarray*}
    \mathcal{E}_1^h
    & =
    & \norma{
      -\int_0^T \int_{-K}^K \left(u \, \partial\phi + f (u) \, \partial_x\phi\right) \d{x} \d{t}
      + \int_0^T \int_{-K}^K \left(u^h \, \partial\phi + f (u^h) \, \partial_x\phi\right) \d{x} \d{t}
      }
    \\
    & \to
    & 0  \quad \mbox{ as } h \to 0 \,.
  \end{eqnarray*}

  \subparagraph{Term $\mathcal{E}_2^h$:} Each $u^h$ is a solution,
  hence
  \begin{displaymath}
    -\int_0^T \int_{-K}^K
    \left(
      u^h \, \partial_t \phi + f (u^h) \, \partial_x \phi
    \right)
    \d{t} \d{x}
    =
    \sum_{i \colon \modulo{x_i} \leq K}
    \int_0^T \Xi\left(\zeta^h (x_i+), \zeta^h (x_i), u^h (t,x_i)\right)
    \phi (t, x_i) \d{t}
  \end{displaymath}
  so that
  \begin{eqnarray*}
    \mathcal{E}_2^h
    & =
    & \left\|
      -\int_0^T \int_{-K}^K
      \left(
      u^h \, \partial_t \phi + f (u^h) \, \partial_x \phi
      \right)
      \d{t} \d{x}
      \right.
    \\
    &
    & \qquad
      \left.
      -
      \sum_{i \colon \modulo{x_i} \leq K}
      \int_0^T \Xi\left(\zeta^h (x_i+), \zeta^h (x_i), u^h (t,x_i)\right)
      \phi (t, x_i) \d{t}
      \right\|
    \\
    & =
    & 0 \,.
  \end{eqnarray*}

  \subparagraph{Term $\mathcal{E}_3^h$:} Recall that by~\eqref{eq:24},
  $\zeta^h (x_i) = \zeta (x_{i-1}+)$ and
  $\zeta^h (x_i+) = \zeta (x_i+)$. By the Lipschitz continuity of
  $\Xi$ and \ref{item:r3}
  \begin{eqnarray*}
    \mathcal{E}_3^h
    & =
    & \left\|
      \sum_{i \colon \modulo{x_i}\leq K \,,\, x_i \in \mathcal{I}^h}
      \int_0^T \Xi\left(\zeta^h (x_i+), \zeta^h (x_i), u^h (t, x_i)\right)
      \, \phi (t, x_i) \d{t}
      \right.
    \\
    &
    & \quad - \left.
      \sum_{i \colon \modulo{x_i}\leq K \,,\, x_i \in \mathcal{I}^h}
      \int_0^T \Xi\left(\zeta (x_i+), \zeta (x_i), u^h (t, x_i)\right)
      \, \phi (t, x_i) \d{t}
      \right\|
    \\
    & \leq
    & \O
      \sum_{i \colon \modulo{x_i}\leq K \,,\, x_i \in \mathcal{I}^h}
      \norma{\zeta (x_{i-1}+) - \zeta (x_i)}
    \\
    & \leq
    & \O \; \sharp\mathcal{I}^h \; \dfrac{h}{1+\sharp \mathcal{I}^h}
    \\
    & \leq
    & \O \; h
    \\
    & \to
    & 0
      \quad \mbox{ as } h\to 0 \,.
  \end{eqnarray*}
  \subparagraph{Term $\mathcal{E}_4^h$:} Recall that if
  $x_{i}\not\in\mathcal{I}$, then
  $\zeta\left(x_{i}+\right)=\zeta\left(x_{i}\right)$, which implies
  the equality
  $\Xi\left(\zeta (x_i+), \zeta (x_i), u^h (t, x_i)\right)=0$. Hence,
  by~\ref{eq:Xi1},~\ref{eq:Xi3} and \ref{item:r2} we compute
  \begin{eqnarray*}
    \mathcal{E}_4^h
    & =
    & \left\|
      \sum_{i \colon \modulo{x_i}\leq K \,,\, x_i \in \mathcal{I}^h}
      \int_0^T \Xi\left(\zeta (x_i+), \zeta (x_i), u^h (t, x_i)\right)
      \, \phi (t, x_i) \d{t}
      \right.
    \\
    &
    & \qquad - \left.
      \sum_{x\in\mathcal{I}, \modulo{x}\leq K }
      \int_0^T \Xi\left(\zeta (x+), \zeta (x), u^h (t, x)\right)
      \, \phi (t, x) \d{t}
      \right\|
    \\
    & \leq
    & \left\|
      \sum_{x\in\mathcal{I}\setminus\mathcal{I}^{h}, \modulo{x}\leq K}
      \int_0^T \Xi\left(\zeta (x+), \zeta (x), u^h (t, x)\right)
      \, \phi (t, x) \d{t}
      \right\|
    \\
    & \leq
    & \O \; \sum_{x\in\mathcal{I}\setminus\mathcal{I}^{h}, \modulo{x}\leq K}\left\|\Delta\zeta\left(x\right)\right\|
    \\
    & \leq
    & \O \; h
    \\
    & \to
    & 0
      \quad \mbox{ as } h\to 0 \,.
  \end{eqnarray*}
  \subparagraph{Term $\mathcal{E}_5^h$:} Using
  Lemma~\ref{lem:EstimateXi2}
  \begin{eqnarray*}
    \mathcal{E}_5^h
    & =
    & \left\|
      \sum_{x\in\mathcal{I}, \modulo{x}\leq K }
      \int_0^T \Xi\left(\zeta (x+), \zeta (x), u^h (t, x)\right)
      \, \phi (t, x) \d{t}
      \right.
    \\
    &
    & \qquad - \left.
      \sum_{x\in\mathcal{I}, \modulo{x}\leq K }
      \int_0^T \Xi\left(\zeta (x+), \zeta (x), u(t, x)\right)
      \, \phi (t, x) \d{t}
      \right\|
    \\
    & \leq
    & \O \sum_{x\in\mathcal{I}}
      \left(
      \left\|\Delta\zeta(x)\right\| \;
      \int_{0}^{T}\left\|u^h(t, x)
      -u(t,x)\right\|\d{t}
      \right)
    \\
    & \to
    & 0
      \quad \mbox{ as } h\to 0 \,.
  \end{eqnarray*}
  The last limit is due to~\eqref{eq:31} and the convergence of the
  series $\sum_{x\in\mathcal{I}} \left\|\Delta \zeta(x)\right\|$. This
  concludes the convergence to the discrete part of the measure.

  \subparagraph{Term $\mathcal{E}_6^h$.} Recall that by~\eqref{eq:24}
  we have $\zeta^h (x_i) = \zeta (x_{i-1}+)$ and
  $\zeta^h (x_i+) = \zeta (x_i+)$. We use below also~\ref{item:r2}:
  \begin{eqnarray*}
    \mathcal{E}_6^h
    & =
    & \left\|
      \sum_{i \colon \modulo{x_i}\leq K \,,\, x_i \not\in \mathcal{I}^h}
      \int_0^T \Xi\left(\zeta^h (x_i+), \zeta^h (x_i), u^h (t, x_i)\right)
      \, \phi (t, x_i) \d{t}
      \right.
    \\
    &
    & \quad - \left.
      \sum_{i \colon \modulo{x_i}\leq K \,,\, x_i \not\in \mathcal{I}^h}
      \int_0^T \Xi\left(\zeta (x_{i-1}+) + \mu (\mathopen] x_{i-1}, x_i \mathopen [), \zeta (x_{i-1}+), u^h (t, x_i)\right)
      \, \phi (t, x_i) \d{t}
      \right\|
    \\
    & \leq
    & \O \sum_{i \colon \modulo{x_i}\leq K \,,\, x_i \not\in \mathcal{I}^h}
      \norma{
      \zeta (x_i+) - \zeta (x_{i-1}+) - \mu (\mathopen]x_{i-1}, x_i \mathclose[)
      }
    \\
    & =
    & \O \sum_{i \colon \modulo{x_i}\leq K \,,\, x_i \not\in \mathcal{I}^h}
      \norma{D \zeta (\mathopen]x_{i-1}, x_i\mathclose])
      - \mu (\mathopen]x_{i-1}, x_i \mathclose[)}
    \\
    & \leq
    & \O \sum_{x \in \mathcal{I} \setminus \mathcal{I}^h} \norma{\Delta\zeta (x)}
    \\
    & =
    &  \O \, h
    \\
    & \to
    & 0  \quad \mbox{ as } h \to 0 \,.
  \end{eqnarray*}

  \subparagraph{Term $\mathcal{E}^h_7$.} Using~\ref{item:r3},

  \begin{eqnarray*}
    \mathcal{E}_7^h
    & =
    & \left\|
      \sum_{i \colon \modulo{x_i}\leq K \,,\, x_i \not\in \mathcal{I}^h}
      \int_0^T \Xi\left(\zeta (x_{i-1}+) + \mu (\mathopen] x_{i-1}, x_i \mathopen [), \zeta (x_{i-1}+), u^h (t,
      x_i)\right)
      \, \phi (t, x_i) \d{t}
      \right.
    \\
    &
    & \left.
      -
      \sum_{i \colon \modulo{x_i}\leq K}
      \int_0^T \Xi\left(\zeta (x_{i-1}+) + \mu (\mathopen] x_{i-1}, x_i \mathopen [), \zeta (x_{i-1}+), u^h (t,
      x_i)\right)
      \, \phi (t, x_i) \d{t}
      \right\|
    \\
    & \leq
    & \O \sum_{i \colon \modulo{x_i}\leq K\,,\, x_i \in \mathcal{I}^h}
      \tv\left(\zeta; \mathopen ] x_{i-1}, x_i \mathclose[\right)
    \\
    & \leq
    & \O \, \sharp\mathcal{I}^h \, \dfrac{h}{1+\sharp\mathcal{I}^h}
    \\
    & \leq
    & \O \, h
    \\
    & \to
    & 0  \quad \mbox{ as } h \to 0 \,.
  \end{eqnarray*}

  \subparagraph{Term $\mathcal{E}_8^h$.} Introduce now
  $\delta_i = \norma{\mu} (\mathopen]x_{i-1}, x_i\mathclose[)$,
  $\mathcal{J} = \left\{i \in \{1, \ldots, N_h\} \colon \delta_i \neq
    0\right\}$ and for $i \in \mathcal{J}$, let
  $v_i = \mu (\mathopen]x_{i-1}, x_i \mathclose[) / \delta_i$. Below,
  we use~\ref{eq:Xi4} with $\delta_i$ for $t$ and $v_i$ for $v$,
  and~\ref{item:r4}:
  \begin{eqnarray*}
    \mathcal{E}_8^h
    & \leq
    & \left\|
      \sum_{i \colon \modulo{x_i}\leq K}
      \int_0^T
      \Xi\left(\zeta (x_{i-1}+) + \mu (\mathopen] x_{i-1}, x_i \mathopen [), \zeta (x_{i-1}+), u^h (t,x_i)\right)
      \, \phi (t, x_i) \d{t}
      \right.
    \\
    &
    & \qquad \left.
      -
      \sum_{i \colon \modulo{x_i}\leq K\,,\,i \in \mathcal{J}}
      \int_0^T \delta_i \, D^+_{v_i}
      \Xi \left(
      \zeta (x_{i-1}+), \zeta (x_{i-1}+), u^h (t, x_i)\right) \, \phi (t, x_i)
      \d{t}
      \right\|
    \\
    & \leq
    & \O \, \sum_{i \colon \modulo{x_i}\leq K\,,\,i \in \mathcal{J}}
      \int_0^T
      \left\|
      \Xi\left(
      \zeta (x_{i-1}+) + \mu (\mathopen] x_{i-1}, x_i \mathopen [), \zeta (x_{i-1}+), u^h (t,x_i)
      \right)
      \right.
    \\
    &
    & \qquad\qquad\qquad\qquad\qquad
      \left.
      -
      \delta_i \, D^+_{v_i}
      \Xi \left(
      \zeta (x_{i-1}+), \zeta (x_{i-1}+), u^h (t, x_i)\right)
      \right\|
      \d{t}
    \\
    & \leq
    & \O \, \sum_{i \colon \modulo{x_i}\leq K\,,\,i \in \mathcal{J}}
      \sigma (\delta_i) \, \delta_i
    \\
    & \leq
    & \O \, \sigma (h) \, \tv (\zeta)
    \\
    & \to
    & 0 \quad \mbox{ as } h \to 0 \,.
  \end{eqnarray*}

  \subparagraph{Term $\mathcal{E}^h_9$.} Use~\ref{eq:Xi4} and recall
  that by~\eqref{eq:20},
  $v_i = (1/\delta_i) \int_{\mathopen]x_{i-1}, x_i\mathclose[} v (y)
  \d{\norma{\mu}} \! (y)$, while clearly
  $v (x) = (1/\delta_i) \int_{\mathopen]x_{i-1}, x_i\mathclose[} v (x)
  \d{\norma{\mu}} \! (y)$. We also use $g^h$, that is defined
  in~\textbf{Step~1} and satisfies~\eqref{eq:29}.
  \begin{eqnarray}
    \nonumber
    \mathcal{E}^h_9
    & =
    & \left\|
      \sum_{i \colon \modulo{x_i}\leq K\,,\,i \in \mathcal{J}}
      \int_0^T \delta_i \, D^+_{v_i}
      \Xi \left(
      \zeta (x_{i-1}+), \zeta (x_{i-1}+), u^h (t, x_i)\right) \, \phi (t, x_i)
      \d{t}
      \right.
    \\
    \nonumber
    &
    & \left.
      -
      \sum_{i \colon \modulo{x_i}\leq K\,,\,i \in \mathcal{J}}
      \int_0^T \int_{\mathopen]x_{i-1},x_i\mathclose[}
      D^+_{v (x)} \Xi
      \left(\zeta (x_{i-1}+), \zeta (x_{i-1}+), u^h (t, x_i)\right)
      \, \phi (t,x_i)
      \d{\norma{\mu}} \! (x) \d{t}
      \right\|
    \\
    \nonumber
    & \leq
    & \O \sum_{i \colon \modulo{x_i}\leq K\,,\,i \in \mathcal{J}}
      \int_{\mathopen]x_{i-1},x_i\mathclose[}
      \norma{v (x) - v_i} \d{\norma{\mu}} \! (x)
    \\
    \nonumber
    & \leq
    & \O \sum_{i \colon \modulo{x_i}\leq K\,,\,i \in \mathcal{J}}
      \dfrac{1}{\delta_i}
      \int_{\mathopen]x_{i-1},x_i\mathclose[^2}
      \norma{v (x) - v (y)} \d{(\norma{\mu}\otimes\norma{\mu})} \! (x,y)
    \\
    \label{eq:43}
    & \leq
    & \O \!\!\!\! \sum_{i \colon \modulo{x_i}\leq K\,,\,i \in \mathcal{J}}
      \dfrac{1}{\delta_i}
      \int_{\mathopen]x_{i-1},x_i\mathclose[^2} \!\!
      \left[
      \norma{v (x) - g^h (x)}
      {+} \norma{g^h (y) - v (y)}
      \right] \!
      \d{(\norma{\mu}\otimes\norma{\mu})} \! (x,y)
    \\
    \label{eq:44}
    &
    & + \; \O \!\!\!\! \sum_{i \colon \modulo{x_i}\leq K\,,\,i \in \mathcal{J}}
      \dfrac{1}{\delta_i}
      \int_{\mathopen]x_{i-1},x_i\mathclose[^2} \!
      \norma{g^h (x) - g^h (y)}
      \d{(\norma{\mu}\otimes\norma{\mu})} \! (x,y) \,.
  \end{eqnarray}
  The two terms in the integral in~\eqref{eq:43} are estimated in the
  same way, using~\eqref{eq:29}, as
  \begin{eqnarray*}
    &
    &    \sum_{i \colon \modulo{x_i}\leq K\,,\,i \in \mathcal{J}}
      \int_{\mathopen]x_{i-1},x_i\mathclose[^2}
      \dfrac{1}{\delta_i}
      \norma{v (x) - g^h (x)}
      \d{(\norma{\mu}\otimes\norma{\mu})} \! (x,y)
    \\
    & \leq
    & \sum_{i \colon \modulo{x_i}\leq K\,,\,i \in \mathcal{J}}
      \int_{\mathopen]x_{i-1},x_i\mathclose[}
      \norma{v (x) - g^h (x)}
      \d{\norma{\mu}} \! (x)
    \\
    & \leq
    & \int_{\reali}
      \norma{v (x) - g^h (x)}
      \d{\norma{\mu}} \! (x)
    \\
    & \leq
    & \int_{\left\{x \in\reali\colon v (x) \neq g^h (x)\right\}}
      \left(\norma{v (x)} + \norma{g^h (x)}\right)
      \d{\norma{\mu}} \! (x)
    \\
    &\leq
    & 2 \, h
    \\
    & \to
    & 0 \quad \mbox{ as } h \to 0\,.
  \end{eqnarray*}
  We now estimate the term~\eqref{eq:44} by means of~\ref{item:r6}:
  \begin{eqnarray*}
    &
    & \sum_{i \colon \modulo{x_i}\leq K\,,\,i \in \mathcal{J}}
      \dfrac{1}{\delta_i}
      \int_{\mathopen]x_{i-1},x_i\mathclose[^2}
      \norma{g^h (x) - g^h (y)}
      \d{(\norma{\mu}\otimes\norma{\mu})} \! (x,y)
    \\
    & \leq
    & h
      \sum_{i \colon \modulo{x_i}\leq K\,,\,i \in \mathcal{J}}
      \dfrac{1}{\delta_i}
      \int_{\mathopen]x_{i-1},x_i\mathclose[^2}
      \d{(\norma{\mu}\otimes\norma{\mu})} \! (x,y)
    \\
    & \leq
    & h
      \sum_{i \colon \modulo{x_i}\leq K\,,\,i \in \mathcal{J}}
      \delta_i
    \\
    & \leq
    & h \, \tv (\zeta)
    \\
    & \to
    & 0 \quad \mbox{ as } h \to 0\,.
  \end{eqnarray*}

  \subparagraph{Term $\mathcal{E}^h_{10}$.} Using~\ref{eq:Xi4}
  \begin{eqnarray*}
    \mathcal{E}^h_{10}
    & =
    & \left\|
      \sum_{i \colon \modulo{x_i}\leq K\,,\,i \in \mathcal{J}}
      \int_0^T \int_{\mathopen]x_{i-1},x_i\mathclose[} \!
      D^+_{v (x)} \Xi
      \left(\zeta (x_{i-1}+), \zeta (x_{i-1}+), u^h (t, x_i)\right)
      \, \phi (t,x_i)
      \d{\norma{\mu}} \! (x) \d{t}
      \right.
    \\
    &
    & \left.
      \qquad -
      \int_0^T \int_{-K}^K
      D_{v (x)}^+ \Xi\left(\zeta (x), \zeta (x), u (t,x)\right)
      \phi (t,x)
      \d{\norma{\mu}} (x )\d{t}
      \right\|
    \\
    & =
    & \left\|
      \sum_{i \colon \modulo{x_i}\leq K\,,\,i \in \mathcal{J}}
      \int_0^T \int_{\mathopen]x_{i-1},x_i\mathclose[} \!
      D^+_{v (x)} \Xi
      \left(\zeta (x_{i-1}+), \zeta (x_{i-1}+), u^h (t, x_i)\right)
      \! \phi (t,x_i)
      \d{\norma{\mu}} \! (x) \d{t}
      \right.
    \\
    &
    & \qquad
      \left.
      -
      \sum_{i \colon \modulo{x_i}\leq K\,,\,i \in \mathcal{J}}
      \int_0^T \int_{\mathopen]x_{i-1},x_i\mathclose[}
      D_{v (x)}^+ \Xi\left(\zeta (x), \zeta (x), u (t,x)\right)
      \phi (t,x)
      \d{\norma{\mu}} (x )\d{t}
      \right\|
    \\
    & \leq
    & \sum_{i \colon \modulo{x_i}\leq K\,,\,i \in \mathcal{J}}
      \int_0^T \int_{\mathopen]x_{i-1},x_i\mathclose[}
      \left\|
      D^+_{v (x)} \Xi
      \left(\zeta (x_{i-1}+), \zeta (x_{i-1}+), u^h (t, x_i)\right)
      \, \phi (t,x_i)
      \right.
    \\
    &
    & \qquad
      \left.
      \qquad\qquad
      -
      D_{v (x)}^+ \Xi\left(\zeta (x), \zeta (x), u (t,x)\right)
      \phi (t,x)
      \right\|
      \d{\norma{\mu}} (x )\d{t}
    \\
    & \leq
    & \O \sum_{i \colon \modulo{x_i}\leq K\,,\,i \in \mathcal{J}}
      \int_0^T \int_{\mathopen]x_{i-1},x_i\mathclose[}
      \left(
      \norma{\zeta (x_{i-1}+) - \zeta (x)}
      +
      \norma{u^h (t,x_i) - u^h (t,x)}
      \right.
    \\
    &
    & \qquad\qquad\qquad\qquad\qquad
      \left.
      +
      \norma{u^h (t,x) - u (t,x)}
      +
      \modulo{x_i-x}
      \right)
      \d{\norma{\mu}} (x )\d{t} \,.
  \end{eqnarray*}
  Observe that
  \begin{displaymath}
    \begin{array}{rcl@{\qquad}l}
      \norma{\zeta (x_{i-1}+) - \zeta (x)}
      & \leq
      & h
      & \mbox{by~\ref{item:r4}}
      \\
      \int_0^T \norma{u^h (t,x_i) - u^h (t,x)} \d{t}
      & \leq
      & \O \, h
      & \mbox{by~\eqref{eq:35}}
      \\
      \modulo{x_i-x}
      & \leq
      & h
      & \mbox{by~\ref{item:r7}}
    \end{array}
  \end{displaymath}
  while by~\eqref{eq:31}, Fubini Theorem and the Dominated Convergence
  Theorem,
  \begin{displaymath}
    \int_{\reali} \int_0^T
    \norma{u^h (t,x) - u (t,x)} \,
    \d{t} \,
    \d{\norma{\mu}} (x )
    \to 0 \quad \mbox{ as } h \to 0\,.
  \end{displaymath}
  The proof is completed.
\end{proofof}

\subsection{Proof Relative to Section~\ref{sec:Appl}}
\label{sec:Applications}

\begin{proofof}{Theorem~\ref{thm:ConvergenceIsenCurvedPipe}}
  It is immediate to check that~\ref{it:f1}--\ref{it:f3} hold, thanks
  to~\ref{it:p}. Define $\Xi$ as in~\eqref{eq:47}. Then,
  conditions~\ref{eq:Xi1} and~\ref{eq:Xi2} follow from the assumed
  $\C2$ regularity of $K$ in all its variables. Condition~\ref{eq:Xi3}
  follows from~\eqref{eq:47} and $K\left(0,(\rho,q)\right) \equiv
  0$. Concerning~\ref{eq:Xi4}, we have
  \begin{displaymath}
    D_v^+\Xi (z,z,v)
    =
    \left[
      \begin{array}{c}
        0
        \\
        \partial_1 K\left(0, (\rho,q)\right) \, \norma{v}
      \end{array}
    \right]
  \end{displaymath}
  indeed, for $v$ such that $\norma{v} \leq 1$, we can estimate
  \begin{eqnarray*}
    &
    & \norma{
      K\left(t \, \norma{v}, (\rho,q)\right)
      -
      \norma{v} \, \partial_1 K \left(0, (\rho,q)\right) \, t
      }
    \\
    & =
    &
      \norma{
      \int_0^1 \left(
      \partial_1 K\left(s\, t \, \norma{v}, (\rho,q)\right)
      -
      \partial_1 K\left(0, (\rho,q)\right)
      \right)
      t \, \norma{v} \d{s}
      }
    \\
    & \leq
    & \norma{K}_{\C2 ([0,r]\times\Omega; \reali)} \, t^2
  \end{eqnarray*}
  proving~\ref{eq:Xi4} with
  $\sigma (t) = \norma{K}_{\C2 ([0,r]\times\Omega; \reali)} \, t$.

  Theorem~\ref{thm:main} can then be applied, exhibiting the existence
  of a solution in the sense of Definition~\ref{def:sol}.

  To obtain the formulation~\eqref{eq:49} from~\eqref{eq:48}, only the
  two terms in the right hand side of the second equations need to be
  considered. The first one is immediate: it only requires the
  substitution~\eqref{eq:47}. Concerning the second one, recall that
  by~\eqref{eq:20}, $\d\mu (x) = \Gamma'' (x) \d{x}$, so that
  $\d{\norma{\mu}} (x) = \norma{\Gamma'' (x)} \d{x}$, and that
  $v (x) = \frac{\Gamma'' (x)}{\norma{\Gamma'' (x)}}$ for a.e.~$x$
  with respect to the measure $\norma{\mu}$.

  Hence, since $\norma{v (x)}=1$ for a.e.~$x$ with respect to the
  measure $\norma{\mu}$,
  \begin{displaymath}
    D^+_{v (x)} \Xi\left(\Gamma' (x), \Gamma' (x), (\rho,q)\right)
    \d{\norma{\mu}} (x)
    =
    \partial_1 K \left(0,(\rho,q) (x)\right) \,  \norma{\Gamma'' (x)}
    \d{x}
  \end{displaymath}
  completing the proof.
\end{proofof}

\begin{proofof}{Theorem~\ref{thm:VarSec}}
  Condition~\ref{it:p} ensures that~\ref{it:f1}--\ref{it:f3} hold. The
  choice~\eqref{eq:54} and the assumptions on $\Xi_2$ imply
  that~\ref{eq:Xi1}--\ref{eq:Xi4} hold. Since the distributional
  derivative of $a$ has neither Cantor part nor atomic part, due
  to~\eqref{eq:56} problem~\eqref{eq:53} reduces to~\eqref{eq:51}.
\end{proofof}

\noindent\textit{Acknowledgment.} The first and second authors
were partly supported by the GNAMPA~2020 project \emph{"From
  Wellposedness to Game Theory in Conservation Laws"}. The work of the
third author has been funded by the Deutsche Forschungsgemeinschaft
(DFG, German Research Foundation) Projektnummer 320021702/GRK2326
Energy, Entropy, and Dissipative Dynamics (EDDy).

{ \small

  \bibliography{curve_03}

  \bibliographystyle{abbrv}

}

%
%
%
%
%
%
%
%
%

\end{document}